\newtheorem{theorem}{Theorem}
\newtheorem{prop}{Proposition}
\newtheorem{corr}{Corollary}
\newtheorem{define}{Definition}
\theoremstyle{remark}
\newtheorem{remark}{Remark}
\newcommand{\C}{\mathbb{C}}
\DeclareMathOperator{\ran}{Range}
\newcommand{\w}{\omega}
\newcommand{\cn}{\textrm{cn}}
\author[K.~P.~Leisman]{Leisman, Katelyn Plaisier\textsuperscript{1}}\thanks{\textsuperscript{1}University of Illinois at Urbana-Champaign, \texttt{kleisman@illinois.edu}}
\author[J.~C.~Bronski]{Bronski, Jared C.\textsuperscript{2}}\thanks{\textsuperscript{2}University of Illinois at Urbana-Champaign, \texttt{bronski@illinois.edu}}
\author[M.~A.~Johnson]{Johnson, Mathew A.\textsuperscript{3}}\thanks{\textsuperscript{3}University of Kansas, \texttt{matjohn@ku.edu}}
\author[R.~Marangell]{Marangell, Robert\textsuperscript{4}}\thanks{\textsuperscript{4}University of Sydney,  \texttt{robert.marangell@sydney.edu.au}}
\title[Periodic NLS Stability ]{Stability of Traveling wave solutions of
  Nonlinear Dispersive equations of NLS type}
\begin{document}

\maketitle

\begin{abstract}
In this paper we present a rigorous modulational stability theory for periodic traveling wave solutions to equations of nonlinear Schr\"odinger type. We first argue that, for Hamiltonian dispersive equations with a non-singular symplectic form and $d$ conserved quantities (in addition to the Hamiltonian), one expects that generically ${\mathcal L}$, the linearization around a periodic traveling wave, will have a $2d$ dimensional generalized kernel, with a particular Jordan structure: The kernel $\ker({\mathcal L})$ is expected to be $d$ dimensional, the first generalized kernel $\ker({\mathcal L}^2)/\ker({\mathcal L})$ is expected to be $d$ dimensional, and there are expected to be no higher generalized kernels. The breakup of this $2d$ dimensional kernel under perturbations arising from a change in boundary conditions dictates the modulational stability or instability of the underlying periodic traveling wave. 

This general picture is worked out in detail for the case of equations of nonlinear Schr\"odinger (NLS) type. We give explicit genericity conditions that guarantee that the Jordan form is the generic one: these take the form of non-vanishing determinants of certain matrices whose entries can be expressed in terms of a finite number of moments of the traveling wave solution. Assuming that these genericity conditions are met we give a normal form for the small eigenvalues that result from the break-up of the generalized kernel, in the form of the eigenvalues of a quadratic matrix pencil. We compare these results to direct numerical simulation in a number of cases of interest: the cubic and quintic NLS equations, for focusing and defocusing nonlinearities, subject to both longitudinal and transverse perturbations. The stability of traveling waves of the cubic NLS (both focusing and defocusing) subject to longitudinal perturbations has been previously studied using the integrability:  in this case our results agree with those in the literature. All of the remaining cases appear to be new.  

\end{abstract}

\section{Introduction}

In this paper we consider the modulational stability of periodic solutions to equations of nonlinear Schr\"odinger (NLS) type
\begin{equation}
i w_t = w_{xx} + \zeta f(|w|^2)w
\label{gNLS}
\end{equation}
subject to perturbations in $L_2({\mathbb R})$ as well as stability of the $x$-periodic solutions to the two dimensional NLS equation
\begin{equation}
  i w_t = w_{xx} \pm w_{zz} + \zeta f(|w|^2)w
\label{2dgNLS}
\end{equation}
to perturbations in the transverse $z$ direction for an arbitrary well-behaved nonlinearity
$f(|w|^2)$. 
If one considers perturbations which are co-periodic then the operator found by linearization about a periodic traveling wave has compact resolvent, and stability is governed by the results of Grillakis-Shatah-Strauss \cite{GSSI,GSSII}. 
In most applications, however, the domains of interest will be unbounded, or at least many times the size of the fundamental period, and one is interested in understanding the effects of arbitrary perturbations. 
This is closely connected with Whitham \cite{Whitham,WhithamBook} modulation theory, where one tries to assess the asymptotic behavior of  long-wavelength perturbations. 
The first consideration of the question of the stability of periodic solutions to the cubic NLS equation seems to have been by Rowlands \cite{Rowlands} in the cubic case, who presented a formal perturbation theory calculation for the stability of the trivial-phase solutions to the cubic NLS---the cnoidal and dnoidal solutions. 
The NLS with cubic nonlinearity is, of course, completely integrable and there is a long history of exploiting the integrable structure in order to compute the spectrum of the linearized operator. 
This was pioneered by Alfimov, Its and Kulagin \cite{AlfimovItsKulagin} who gave a complete nonlinear description of the modulational instability---they show how to construct a time dependent solution to the focusing nonlinear Schr\"odinger equation that is asymptotic to a spatially periodic solution as $|t| \rightarrow \infty$. 
In other words they construct the whole homoclinic orbit to an unstable periodic traveling wave. While this was worked out in detail only for the dnoidal wave the construction applies more generally to any spatially periodic traveling wave, and in principle even to temporally periodic breather type
solutions. 
In a similar spirit Bottman, Deconinck and Nivala, and Deconinck and Segal \cite{BottmanDeconinckNivala,DeconinckSegal} used the exact integrability to give a very explicit description of the spectrum of the linearization of the cubic NLS around a periodic traveling wave using the well-known connection between the squared eigenfunctions of the Lax pair and the eigenfunctions of the linearized operator. 
In this vein we also mention a rigorous analysis of the periodic linearized spectrum of a ``trivial phase'' solution to (\ref{gNLS}) in the focusing cubic case by Gustafson, Le Coz and Tsai \cite{GustafsonLeCozTsai}, and the work of Gardner \cite{Gardner} on stability in the long-period (soliton)  limit. We recommend the text of Kamchatnov \cite{Kamchatnov} for more information on the modulations of nonlinear periodic traveling waves. 

While somewhat further afield it is worth mentioning the large
literature on the integrable modulation theory. Lax and Levermore \cite{LL1,LL2,LL3}, and
independently and from a somewhat different point of view Flashka,
Forest and McLaughlin \cite{FFM} used the integrability of the Korteweg-de Vries
equation to extend the formal Whitham theory to all time---among
other advantages the integrability allows one to continue the solution
past the point of wave-breaking. This analysis has since been extended
to many other completely integrable equations \cite{Grava,VenakidesDeift,JinLevermoreMcLaughlin,EM}. While we are considering a non-integrable evolution equation the analysis here has in many ways been shaped by the integrable
theory.

Here we take a rigorous modulation theory approach. We explicitly compute the
generalized kernel of the linearization about an arbitrary traveling
wave solution to (\ref{gNLS}) under periodic boundary conditions.
We know from Floquet theory that the spectrum of a periodic 
operator on $L_2({\mathbb R})$  is given by a union of the spectra
of a one-parameter family of operators, the parameter being the
Floquet exponent. Since we are able to compute the
generalized kernel of the linearized operator explicitly when the Floquet exponent is zero we can study the breakup of this generalized
kernel for small but non-zero quasi-momentum. We find a normal form
for the spectral curves in a neighborhood of the origin in the
spectral plane. This calculation is very much in the spirit of the
previously mentioned 
work of Rowlands \cite{Rowlands} for the cubic NLS, although the current
calculation applies most directly to the non-trivial phase case, while
Rowlands' calculation was done for the trivial phase solutions. Similarly in the case of transverse perturbations one can compute how the generalized kernel breaks up as $k$, the transverse wavenumber, varies. 
  
While the present calculation is similar in spirit to earlier calculations on equations
of Korteweg-de Vries (KdV) and nonlinear Klein-Gordon type \cite{BMR,BenzoniNobleRodriguesII, BenzoniNobleRodriguesI,BridgesFan,BronskiHur, BronskiJohnson,BronskiJohnsonKapitula,DH,GallayHaragus,HSS,Haragus, Johnson,JNPZ,Johnson2010,JohnsonZumbrunBronski,JonesMarangellMIllerPlazaI,JonesMarangellMIllerPlazaII,MarangellMiller,Serre} we feel that the present calculation makes clear a number of issues that were
not clear in previous calculations regarding the relationship between
the Hamiltonian structure of the problem and the nature of the
spectrum in a neighborhood of the origin in the spectral plane. While
the KdV equation has a Hamiltonian structure the fact that the
symplectic form, $\partial_x$, has a kernel gives the KdV stability
problem some features that are not typical of Hamiltonian stability
problems where the symplectic form is non-singular.

In this paper we will often consider the Jordan structure of the
kernel of an operator ${\mathcal L}$ of Hamiltonian form. In the usual
way the kernel will be defined to be 
$\ker({\mathcal L})=\ker_0({\mathcal L})= \{u~|~{\mathcal L} u = 0 \}.$
Analogously the $k^{th}$ generalized kernel will be defined to be
$\ker_k({\mathcal L})=\ker({\mathcal L}^{k+1}) / \ker({\mathcal L}^{k})$, in other words,
the vectors at position $k$ in a Jordan chain. Typically, as we will
see, the linearization has only a kernel and a $1^{st}$ generalized
kernel, and no higher generalized kernels, and these two are dual
in the Hamiltonian sense---the kernel is associated with the angle
variables, while the first generalized kernel is associated with the
action variables. Throughout the paper when we refer to the kernel or the kernel proper we are referring to $\ker_0({\mathcal L}).$ When we refer to the generalized kernel with no further designation we mean $\cup_k \ker_k({\mathcal L})$, the set of vectors for which ${\mathcal L}^k {\bf u} = 0$ for some value of $k$.   

We briefly remark on some notational conventions to be followed in the remainder of the paper. Operators will normally be represented by upper-case calligraphic Latin letters, such as $\mathcal{L}$ and matrices by upper-case bold letters such as $\bf M$, but we will occasionally deviate from this in the interest of clarity.  Lower-case bold letters will always denote vectors. Since the operators in question will typically be non-self-adjoint it is convenient to use both left and right (generalized) eigenvectors, which will be denoted by ${\bf v}_k$ and ${\bf u}_l$ respectively, with the inner product of the two represented as ${\bf v}_k {\bf u}_l.$

\subsection{Hamiltonian flows with symmetry, and the Jordan structure of the linearized operator.}

We consider the stability of quasi-periodic traveling
wave solutions to equations of NLS type. 
The calculation presented here is rather involved, so we begin by
giving an overview of the motivating ideas.  The linearized operator will be denoted by ${\mathcal L}$,
or by ${\mathcal J} \partial^2 {\tilde H}$ if we wish to emphasize the
Hamiltonian nature. As the linearized operator $\mathcal{L} $ is
non-self-adjoint it will be convenient to consider both the left and
the right eigenvalues. The right eigenvectors will be denoted ${\bf u}$
and the left eigenvectors will be denoted ${\bf v}$:
\begin{align*}
  &{\mathcal L} {\bf u} = \lambda {\bf u} & \\
  & {\bf v} {\mathcal L} = \lambda {\bf v}. & 
\end{align*}
Note that because the linearized operator 
${\mathcal L}={\mathcal J} \delta^2 {\tilde H}$ takes the Hamiltonian form the
symplectic form ${\mathcal J}$ maps between the left and the right
eigenspaces. If ${\bf u}$ is a right eigenvector of 
 ${\mathcal  L}={\mathcal J} \delta^2 {\tilde H}$, with eigenvalue $\lambda$
 then ${\bf v} = {\bf u}^t {\mathcal J}$ is a left eigenvector, with eigenvalue
 $-\lambda$. (Here we are assuming that $\mathcal{J}$ is in the standard form
 ${\mathcal J}^t = -{\mathcal J}, {\mathcal J}^t {\mathcal J} = {\mathcal I}$.) 
 
We consider an abstract Hamiltonian system
\begin{equation}
  \frac{\partial w}{\partial t} = {\mathcal J} \delta H(w)
  \label{eqn:Ham}
\end{equation}
where $\delta H$ represents the first variation of the Hamiltonian
functional (in other words $\delta$ is the Euler-Lagrange operator)
and the skew-symmetric operator ${\mathcal J}$ represents the
symplectic structure. We assume that ${\mathcal J}$ is
invertible, as is the case for equations of nonlinear Schr\"odinger
(NLS) type.

We further assume that there are $d$ additional
functionals (the conserved quantities, or momenta) $\{P_j(w)\}_{j=0}^{d-1}$  
that 
pairwise Poisson commute - in other words that
\begin{align}
  &\langle\delta P_j, {\mathcal J} \delta H \rangle = 0,&  \label{eqn:Com1} \\
  &\langle\delta P_j, {\mathcal J} \delta P_k \rangle = 0,\qquad j \neq k.& \label{eqn:Com2}  
\end{align}
From here on for brevity we have suppressed the dependence on $w$.  Note that the conserved quantities $P_j$ generate $d$  commuting
flows, each with its own phase $s_j$: in other words the evolution equations 
\begin{equation}
 \frac{\partial w}{\partial s_j} = {\mathcal J} \delta P_j 
\label{eqn:ComFlows}
\end{equation}
 all commute, so given a function  $w$ one can generate
 $w(s_0,s_1,\ldots s_{d-1})$  depending on $d$ parameters $\{s_j\}_{j=0}^{d-1}$ representing a rotation through angle $s_j$ in the $j^{th}$ phase. For the generalized NLS, for example, there are two
 additional conserved quantities (beyond the Hamiltonian), the mass and
 the momentum
 \begin{align*}
   & P_0 = \int |w|^2 ~dx,& \\
  &  P_1 = \frac{i}{2} \int (w_x^*w - w_x w^*)~dx.& 
 \end{align*}
 The flows associated with these conserved quantities are given by
 \begin{align*}
   & w_{s_0} = i w =\mathcal J \delta P_0,& \\
   & w_{s_1} = w_x =\mathcal J \delta P_1,& 
 \end{align*}
 and these generated the two symmetries of the generalized NLS,
 translation invariance and the $U(1)$ symmetry
 \begin{equation}
   w(x) \mapsto w(x + s_1) e^{i s_0}.
 \end{equation}
 
The traveling wave solutions to Equations (\ref{eqn:Ham} --
\ref{eqn:Com2}) are given by critical points of the reduced Hamiltonian
\begin{equation}
  {\mathcal J} \delta\left(H  - \sum_{j=0}^{d-1} c_j P_j\right)= 0.
\label{eqn:ConCrit}
\end{equation}
From an energetic point of view the traveling waves represent
constrained critical points---the traveling waves are critical points
of the Hamiltonian $H$  subject to the constraints that conserved
quantities $P_j$ are fixed, with the constants $c_j$  as Lagrange multipliers which enforce the momentum
constraints. A slightly more physical interpretation of  the $c_j$ is as angular frequencies that dictate the speed in the $j^{th}$ phase: if one takes a solution to Equation  (\ref{eqn:ConCrit}) and allows it to flow under the
original Hamiltonian flow
\begin{align*}
  w_t &= {\mathcal J} \delta H &\\
      &= {\mathcal J} \sum c_j \delta P_j, & 
\end{align*}
a solution to (\ref{eqn:ConCrit}) rotates with
angular frequency  $c_j$ in the $j^{th}$ phase. Note that since the vector fields Poisson commute the flow generated by the sum is simply the composition of the individual flows. 

The stability of traveling wave solutions requires an  
understanding of the linearized operator, which takes the
Hamiltonian form: the product of a skew-adjoint operator (the
symplectic form ${\mathcal J}$) with a Hermitian operator (the second variation
of the reduced Hamiltonian) 
\begin{equation}
 {\mathcal L} =  {\mathcal J} \delta^2\left(H - \sum c_j P_j\right)  = {\mathcal J} \delta^2 \tilde H.
  \label{eqn:SecVar}
\end{equation}
The main observation is that the structure of the generalized kernel
of the second variation (\ref{eqn:SecVar}) reflects the Hamiltonian
structure of the problem. In particular one expects (in a generic
situation) that the generalized kernel of the operator ${\mathcal J}\delta^2 \tilde H$ has the following structure: there are $d$ elements of the kernel 
$\{{\bf u}_{2j}\}_{j=0}^{d-1}\in \ker_0\left({\mathcal J}  \delta^2 \tilde H\right)$, 
and $d$ elements of the first generalized kernel 
$\{{\bf u}_{2j+1}\}_{j=0}^{d-1}\in\ker_1\left({\mathcal J}  \delta^2 \tilde H\right)$, satisfying
\begin{align*}
  & {\mathcal J}\delta^2 \tilde H {\bf u}_{2j} = 0 &\\
  & {\mathcal J}\delta^2 \tilde H {\bf u}_{2j+1} = {\bf u}_{2j}.&
\end{align*}
In other words the Jordan structure of the kernel is (generically)
expected to be a direct sum of $d$ copies of $2\times 2$ Jordan blocks. In the next
section we will explicitly give the genericity conditions for equations of NLS
type. The basis vectors for the kernel $\{{\bf u}_{2j}\}_{j=0}^{d-1}$ and first
generalized kernel $\{{\bf u}_{2j+1}\}_{j=0}^{d-1}$ have natural
interpretations: the null-vectors are associated with the positions or
phases $s_j$ and the generalized null-vectors 
are
associated with the dual variables, the
corresponding wavespeeds $c_j$.

To see this recall that (for fixed values of
the momenta $\{P_j\}_{j=0}^{d-1}$) any solution $w$ to the traveling wave
equation (\ref{eqn:ConCrit}) immediately generalizes to a $d$ parameter
solution $w(s_0,s_1,\ldots,s_{d-1})$ by the flows in (\ref{eqn:ComFlows}).  
Given such a $d$  parameter family of
solutions to (\ref{eqn:ConCrit}) (parameterized by $\{s_j\}_{j=0}^{d-1}$) we can
generate elements in the kernel of (\ref{eqn:SecVar}) by
differentiating (\ref{eqn:ConCrit}) with respect to $s_k$
\begin{equation}
  {\mathcal J} \delta^2\left(H -\sum_{j=0}^{d-1} c_j P_j \right) w_{s_k}= 0.
  \label{eqn:Kernel}
\end{equation}
Therefore the  derivative of a solution $w$ with respect to the
$k^{th}$ phase  $s_k$   lies in the (right) kernel of
the linearized operator: 
${\bf u}_{2j} = \frac{\partial w}{\partial s_j}$.  
From the form of the linearized operator ${\mathcal J} \delta^2 \tilde H$, we see that the left null-vectors are
given by ${\bf v}_{2j+1}=  {\bf u}_{2j}^t {\mathcal J}$. Since the solutions satisfy
${\bf u}_{2j} = w_{s_j} = {\mathcal J} \delta P_j$ we have that the the elements of the left kernel
are given by ${\bf v}_{2j+1} = \delta P_j^t$   

We are assuming here that the gradients
$\delta P_j,$ evaluated on the traveling wave, are linearly
independent so that ${\bf v}_{2j+1}$ and thus (through the invertibility
of ${\mathcal J}$) ${\bf u}_{2j}$ are as well. Generically one expects this, however it will not always
be true. For integrable Hamiltonian flows, for instance, there are an
infinite number of conserved quantities but when one evaluates the
gradients on the traveling wave solutions only a finite number of
these will be linearly independent. 

There is a dual (in the Hamiltonian sense) set of vectors that lie in
the first generalized kernel of (\ref{eqn:SecVar}). These are found by
differentiating (\ref{eqn:ConCrit}) with respect to the Lagrange
multipliers $c_j$
\begin{equation}
  {\mathcal J} \delta^2\left(H - \sum_{j=0}^{d-1} c_j P_j\right) w_{c_k}
    = {\mathcal J} \delta P_k = \frac{\partial w}{\partial s_k} = {\bf u}_{2k}. 
  \label{eqn:GenNullSpace}
\end{equation}
Thus the elements of the first generalized kernel can be written as ${\bf u}_{2j+1}=w_{c_j}$, and those of the left first generalized kernel as ${\bf v}_{2j}=-{\bf u}_{2j+1}^t\mathcal J$ (this sign convention is slightly more convenient for later calculations). 

It is worth noting that this picture assumes that the symplectic form ${\mathcal J}$ is non-singular. If ${\mathcal J}$ has a kernel then one can have a Casimir---a conserved quantity whose functional gradient lies in the kernel of the symplectic form ${\mathcal J}$. In this case differentiation with respect to the corresponding Lagrange multiplier generates an element of the kernel proper. This occurs for equations of KdV and Boussinesq type, where the symplectic form is a derivative and there is a conserved mass Casimir whose functional gradient is constant. 

A couple of things to note here. We have the relations
\begin{align*}
  & {\mathcal L} {\bf u}_{2j} = 0 \qquad {\mathcal L} {\bf u}_{2j+1} = {\bf u}_{2j} & \\
   & {\bf v}_{2j+1} {\mathcal L} = 0 \qquad {\bf v}_{2j} {\mathcal L}  = {\bf v}_{2j+1} &
\end{align*}
and thus 
\begin{align*}
  & {\bf v}_{2j+1} {\bf u}_{2k} = {\bf v}_{2j} {\mathcal L} {\bf u}_{2k} = 0& \\
  & {\bf v}_{2j+1} {\bf u}_{2k+1} = {\bf v}_{2j} {\mathcal L} {\bf u}_{2k+1} = {\bf v}_{2j} {\bf u}_{2k}.& 
\end{align*}

Later in the paper, we will need to compute inner products between elements of the left and right kernel, which are given by
\begin{equation}
  {\bf v}_{2j} {\bf u}_{2k} = \delta P_j^t w_{c_k} = \frac{\partial P_j}{\partial c_k}.
  \label{eqn:Maxwell1}
\end{equation}
Thus the Gram matrix made up of these inner products can be expressed in terms of derivatives of
the conserved quantities with respect to the Lagrange multipliers. 
We also have a formula for the values of the conserved quantities in
terms of the reduced Hamiltonian. Considering the reduced Hamiltonian
$\tilde H = H - \sum c_j P_j$ as a function of the Lagrange
multipliers $c_j$ on the traveling wave solution we have 
\begin{equation}
  \frac{\partial \tilde H}{\partial c_j } = (\delta \tilde H)\cdot
  \frac{\partial w}{\partial c_j} - P_j = -P_j
  \label{eqn:Maxwell2}
\end{equation}
where the second equality follows from the fact that the traveling wave
is a critical point of $\tilde H$, and thus $\delta \tilde H=0$. 
In the context of thermodynamics the relations
(\ref{eqn:Maxwell1}) and (\ref{eqn:Maxwell2}) are generally
known as the Maxwell relations. Given this we have the identity 
\[
\frac{\partial P_k}{\partial c_j} 
= -\frac{\partial^2 \tilde H}{\partial c_j\partial c_k } 
= \frac{\partial P_j}{\partial c_k} 
\]
and so the Gram matrix of the generalized null-space is completely expressible in terms of the
second variation of the reduced Hamiltonian $\tilde H$ with respect to the wavespeeds $c_j.$

The Jordan structure of the linearized operator follows from the
Hamiltonian structure of the problem. While no assumption is being made on the
Hamiltonian $H$ (in particular we don't assume that it is integrable)
the traveling waves define an invariant $2d$  dimensional manifold
sitting inside the phase space, and the flow on this submanifold
is integrable.  An integrable Hamiltonian system can
always be written in terms of action and  angle variables
$\{\Gamma_j,\Theta_j\}_{j=0}^{d-1}$ such that the Hamilton's equations become  
\begin{eqnarray*}
  & \tilde H = \tilde H(\Gamma_0,\Gamma_1,\ldots, \Gamma_{d-1}), & \\
  & \frac{d\Theta_j}{dt} = \frac{\partial H}{\partial \Gamma_j}, & \\
  &  \frac{d \Gamma_j}{dt} = -\frac{\partial H}{\partial \Theta_j}=0. &
\end{eqnarray*}
In such a case the skew-symmetric form acting on the
second variation of the Hamiltonian,  ${\mathcal J} \delta^2 \tilde H$ will 
contain a $2d \times 2d$ block of the form
\[
  \left(\begin{array}{cc} 0 & \frac{\partial^2 \tilde H}{\partial \Gamma^2} \\ 0 & 0 \end{array}\right)
\]
which is clearly a Jordan block.  
Therefore we expect that in general when we compute the quantity
${\mathcal J} \delta^2 \tilde H ={\mathcal L}$ about a point that lies on the invariant $2d$ dimensional
submanifold we will have a generalized kernel of dimension $2d$, with
$\dim(\ker(\mathcal{L}))=d$ and $\dim(\ker({\mathcal L}^2)/\ker({\mathcal L}))=d$. 
There are assumptions here that must
be checked in practice. We are assuming that $\delta P_j$ are
linearly independent (or equivalently that $\partial^2_{\Gamma_j \Gamma_k} \tilde
H$ has full rank) and that there is nothing in the generalized
kernel other than $\{w_{s_j}\}, \{w_{c_j}\}$, both of which can
fail, but this is the generic picture that one should expect.

This suggests the following strategy for understanding the spectrum of
the linearized operator about a traveling wave, at least in a
neighborhood of the origin in the spectral plane. Given a reasonably
complete description of the generalized kernel of the linearized
operator one can compute how this kernel breaks up under
long wavelength perturbations. There are at least two distinct situations in which one
might want to do this, namely assessing the impact of longitudinal and
transverse perturbations on stability.

In the longitudinal case one would like to assess the impact of
subharmonic perturbations, those with a period which is a (large)
multiple of the period of the underlying traveling wave. In general if ${\mathcal L}$ is a linear periodic differential operator then the eigenfunctions $u(x;\mu)$ admit a   
Floquet decomposition $u(x;\mu) = e^{i \mu x} \hat u(x;\mu)$, where $\hat u(x;\mu)$ has a
period equal to that of the underlying traveling wave, and $\mu$
represents the quasi-momentum. This leads to a one-parameter
family of eigenvalue problems, parameterized by $\mu$, with $\mu=0$
representing the periodic eigenvalue problem. We have argued that the linearized periodic operator will generally have a $2d$ dimensional generalized kernel. One is then led to
the question of understanding how this generalized
kernel breaks up into $2d$ small eigenvalues for non-zero Floquet multiplier $\mu$. 

In the transverse case one might be interested in assessing the impact
of long-wavelength perturbations in a transverse direction. For
instance one might be interested in understanding the stability
of one-dimensional traveling waves to long wavelength perturbations in a transverse
direction. Here we have a similar situation, with the small parameter $k$ representing the wave-number of the transverse perturbation. The perturbations are somewhat simpler than those induced by the Floquet multiplier, but they are of a similar form.  

In the next section we present a detailed calculation for the periodic
traveling wave solutions to a generalized nonlinear Schr\"odinger
(NLS) equation. We note a slight change in notation between this
section and the last: in the present section we think of the period
and the Floquet multiplier of the underlying solution as being
fixed. In the next section it will be more convenient to consider the
period, etc., as being functions of underlying constants in the
problem. There are a couple of reasons for doing so. Firstly such
parameters arise naturally in the process of reducing the traveling
wave to quadrature, and the traveling wave solution satisfies a number
of identities with respect to these parameters (the Picard-Fuchs
relations). Further the genericity conditions on the Jordan form of
the kernel of the linearized operator can be expressed most naturally
in terms of the variation of the period and the Floquet multiplier in
terms of these parameters.

Finally we use the following notation: throughout this calculation we
will have numerous occasions to consider the linear independence of
various gradients of the period $T$, the Floquet exponent $\eta$, and
the conserved quantities. We will use the notation $\{A,B\}_{a,b}$
to denote the quantity 
$\{A,B\}_{a,b} = \frac{\partial A}{\partial a}\frac{\partial B}{\partial b}
- \frac{\partial A}{\partial b}\frac{\partial B}{\partial a}$, and similarly
\[
\def\arraystretch{1.6}
  \{A,B,C\}_{a,b,c} = \left|\begin{array}{ccc}
    \frac{\partial A}{\partial a} & \frac{\partial A}{\partial b} & \frac{\partial A}{\partial c} \\ 
    \frac{\partial B}{\partial a} & \frac{\partial B}{\partial b} & \frac{\partial B}{\partial c} \\
    \frac{\partial C}{\partial a} & \frac{\partial C}{\partial b} & \frac{\partial C}{\partial c} \\ 
  \end{array}\right|.
\]

\subsection{NLS Traveling Waves.}

In this section we consider the problem of applying the abstract considerations 
of the previous section to the nonlinear Schr\"odinger (NLS) equation 
\[
  i w_t = w_{xx} + \zeta f(|w|^2) w. 
\]

This is an infinite dimensional Hamiltonian flow, and can
be written in the form
\[
  w_t = {\mathcal J} \delta H
\]
where $H$ is the Hamiltonian functional with density $h$
\[
 H =  \int w_{x}^2 - \zeta F(|w|^2) dx = \int h(w,w^*,w_x,w_x^*) dx, 
\]
$\delta H$ is the variational derivative of the functional $H$:  
$\delta H=\frac{\partial h}{\partial w^*} - \frac{\partial }{\partial x} \frac{\partial h }{\partial w_x^*},$ $F$ is the antiderivative of the nonlinearity $F'=f,$ and ${\mathcal J}=i$. As discussed in the previous section for a general nonlinearity the NLS equation has two
conserved quantities, the mass and the momentum, given by 
\begin{align*} 
&M = \int |w|^2 dx, & \\
&P = \frac{i}{2} \int (w_x^* w - w_x w^*) dx. & 
 \end{align*}  

If one looks for a traveling wave solution in the form
\[w(x,t) = e^{i\w t}\phi(x+ct),\]
where (writing $y=x+ct$)
\begin{equation} \phi(y) = \exp(i (\theta_0+S(y+y_0)+cy/2))A(y+y_0),
\label{eqn:twphi}
\end{equation}
%
then the functions $A(y+y_0),\ S(y+y_0)$ satisfy
\begin{align}
  & 2 A_y  S_y + A  S_{yy} = 0, & \label{eqn:TravWave1}\\
  & S_y = \frac{\kappa}{A^2}, & \label{eqn:TravWave2}\\
  & A_{yy}= -(\omega+c^2/4) A + A  S_y^2 - \zeta f(A^2) A, &\label{eqn:TravWave3}\\
  & A_y^2 = 2 E - (\omega+c^2/4) A^2 - \frac{\kappa^2}{A^2} - \zeta F(A^2),& \label{eqn:TravWave4}
\end{align}
where Equation (\ref{eqn:TravWave4}) follows from (\ref{eqn:TravWave3}) by integration. 
There are seven different parameters $(\omega,E,\kappa,\zeta,c,y_0,\theta_0)$ defining a general periodic traveling wave, so it is worth saying something about the interpretations of them. First, we note that we can rearrange this solution as
\[w(x,t) = e^{i(\w +c^2/4)t}e^{i\theta_0}e^{iS(x+ct+y_0)}A(x+ct+y_0)e^{ic(x+ct/2)/2}.
\]
By Galilean invariance, we can reduce this to 
\[w(x,t) = e^{i(\w+c^2/4)t}e^{i\theta_0}e^{iS(x+y_0)}A(x+y_0),
\]
and by translation and $U(1)$ invariance, we can further reduce to
\[w(x,t) = e^{i(\w+c^2/4)t}e^{iS(x)}A(x).
\]
The parameters $y_0,\ \theta_0$ are the phases or angles associated with the translation invariance and $U(1)$ invariance, which are generated by
the conserved quantities $P$ and $M$ respectively. Derivatives
with respect to these variables will generate elements of the kernel of the linear
operator. We will show shortly that generically these derivatives span the kernel. 
Additionally, the parameters $c$ and $ \omega$ are the Lagrange multipliers 
associated with the constant momentum and mass
constraints. Derivatives with respect to these parameters will generically 
generate the $1^{st}$ generalized kernel of the linearized
operator, with explicit genericity conditions to be described later. 
Because of the translation invariance and $U(1)$ invariance, we can eventually omit the dependence on $\theta_0$ and $y_0$: after initial differentiation of $\phi(y)$ with respect to them, we will set $\theta_0=
y_0=0$. 
Because of the Galilean invariance, after differentiating $\phi(y)$ with respect to $c$, we can simplify all calculations by setting $c=0$. 
From these symmetries, we can also say that $A$ and $S$ depend only on $y+y_0,\ \omega,\ E,\ \kappa,\ \zeta$; they have no direct dependence on $\theta_0$ or $c$, and dependence on $y_0$ is equivalent to dependence on $y$.
The parameters $E,\kappa$ are associated with the boundary
conditions: generally the solutions to equations (\ref{eqn:TravWave1})
 ---(\ref{eqn:TravWave4}) satisfy quasi-periodic boundary conditions
  of the form 
\begin{align*}
  & w(T) = w(0) e^{i \eta}&  \\
  & w_x(T) = w_x(0) e^{i \eta} &   
\end{align*} 
where the period $T$ and the quasi-momentum $\eta$ depend on
$\omega,E,\kappa,\zeta$, and $\eta$ also depends on $c$. The discussion of the Hamiltonian
structure in the preceding section assumes a fixed function space, so a fixed
period $T$ and quasi-momentum $\eta$. In principle the
conditions $T,\eta$ fixed defines $E,\kappa$ implicitly in terms of
$c,\omega$. In this section we will treat $E,\ \kappa,\ \w$ as independent, as the
genericity conditions are most naturally expressed in terms of partial
derivatives of the conserved quantities and the period and
quasi-momentum in terms of the various parameters. The price that we
pay for this is that the elements of the first generalized kernel are
given by somewhat tedious directional derivatives in
$E,\kappa,\omega$.   Finally $\zeta$ is a somewhat artificial
parameter that we introduce in order to compute the preimage of
certain elements of the range of the linearized operator. We remark
that in the case of a power-law type nonlinearity, $f(A^2) = \pm
A^{2m}$, one can do these calculations using the inifinitesimal
generator of the scaling group, but for a more complicated
nonlinearity this additional parameter is more convenient.


We define the domain $\Omega$ to be the open set of parameter values for
which equation (\ref{eqn:TravWave4}) has a non-degenerate traveling
wave.
\begin{define}
  We define the parameter domain $\Omega$ to be the open set of parameter
  values $(E,\kappa, \omega,\zeta)$ such that
  \begin{itemize}
    \item The quantity $\kappa > 0$.
    \item The function 
      $R(A)=2 E - \omega  A^2 - \frac{\kappa^2}{A^2} - \zeta F(A^2)$ 
      is real and positive on some interval $(a_-,a_+)$ on the positive real axis, 
      and has simple roots at $a_\pm$.
  \end{itemize}
\end{define}

\begin{remark}
The set $\Omega$ represents the set of parameter values where the
generalized NLS has non-trivial phase solutions. The boundary sets
$\kappa=0$ and the set of parameter values for which $R(A)$ has real
roots of higher multiplicity (the zero set of the discriminant)
represent various degenerations including
the trivial phase periodic solutions, solitary wave solutions,
constant amplitude (Stokes wave) type solutions, etc. In order to be
able to differentiate with respect to parameters it is easiest to
assume that we are on the open set. In principle the stability of the
limiting solutions could be determined by taking an appropriate limit,
but in practice it is probably easier to repeat the calculation for
the subclass of solutions. Also note that when $\kappa>0$ there is a
neighborhood of the origin that is in the classically forbidden region
(in other words $R(A)$ is negative) and thus in this situation it
suffices to look at $A$ strictly positive. It is only in the
$\kappa=0$ case that one can have solutions (for instance the cnoidal
solutions) that pass through zero. This makes $\kappa=0$ a somewhat
singular perturbation of $\kappa\neq 0.$ 
\end{remark}

It is useful to introduce the function
\begin{equation}
{K} = \int_0^T (A_y)^2dy = \oint_\gamma  \sqrt{2 E - \omega A^2 - \frac{\kappa^2}{A^2} -
  \zeta F(A^2)} dA\label{eqn:bigKint}
\end{equation}
where $\gamma$ is any simple closed contour in the complex plane
encircling an interval $(a_-,a_+)$ on the real axis on which the function 
$R(A) =  2 E - \omega A^2 - \frac{\kappa^2}{A^2} - \zeta F(A^2)$ 
is positive with simple zeroes at $a_-$ and $a_+.$ If
this is the case then there is a periodic solution to 
$ A_y^2 = 2 E - \omega A^2 - \frac{\kappa^2}{A^2} - \zeta F(A^2)$ 
with period $T$  with a minimum value $A=a_-$ and a
maximum value $A=a_+$. We will always choose the translate of the traveling wave profile so that $A(0)=a_-$ and $A(T/2)=a_+$. Further the spatial period $T$, the
mass $M$ and the phase $S(T)-S(0)$ satisfy
\begin{align*}
  & \frac{\partial { K}}{\partial E} = T & \\
  &\frac{\partial { K}}{\partial \omega} = -\frac{1}{2} M  & \\
  & \frac{\partial {K}}{\partial \kappa} = S(0)-S(T) & 
\end{align*}
The function $w(y,t)$ is itself not periodic but rather
quasi-periodic. It satisfies the boundary conditions 
\begin{align*}
  & w(T) = \exp(i (S(T)-S(0))) w(0) & \\
  & w_y(T) = \exp(i (S(T)-S(0))) w_y(0) & 
\end{align*}
where the accumulated phase $\eta=S(T)-S(0)$ can be interpreted as the
quasi-momentum of the traveling wave.

\subsection{Linearization and the Jordan structure of the kernel}

We next consider the linearized equations of motion by considering a
perturbation of the form
\[
w(y,t) = e^{i\w t}(\phi(y)+\epsilon e^{i[S(y+y_0)+cy/2+\theta_0]} W(y,t))
\]
where  $\phi(y)$ is as in Equation \eqref{eqn:twphi}, and $W(y,t)$ can be written in the form $R(y,t) + i I(y,t)$. Note
that by ``factoring off'' the phase 
$\exp(i(S(y+y_0)+cy/2+\theta_0))$ 
from 
the perturbation we have effectively
removed the quasi-momentum. 
In prior sections we expressed the
Hamiltonian structure in terms of a single complex field, with
symplectic form ${\mathcal J} = i$. In the literature on the stability of
traveling wave solutions to NLS it is usual to work instead with
the real and imaginary parts of the field. We will follow that
convention here. In this case the symplectic form is given by
$\left(\begin{array}{cc} 0 & -1 \\ 1 & 0\end{array}\right)$ which, 
with a slight abuse of notation, we will continue to denote ${\mathcal J}.$ 

In terms of the real and imaginary parts $R,I$ 
the linearized equations of motion can be written as
\begin{align}
\nonumber  {\mathcal J} \delta^2 \tilde H\left(\begin{array}{c}R \\ I \end{array}\right) = \left(\begin{array}{c}R \\ I \end{array}\right)_t 
    & = \left(\begin{array}{cc}{\mathcal K} & -{\mathcal L}_- \\ {\mathcal L}_+ & {\mathcal K} \end{array}\right)
    \left(\begin{array}{c}R \\ I \end{array}\right) &\\ 
  & = \left(\begin{array}{cc} 0 & -1 \\ 1 & 0 \end{array}\right)
    \left(\begin{array}{cc}L_+ & {\mathcal K} \\ {\mathcal K}^T   & L_- \end{array}\right)
    \left(\begin{array}{c}R \\ I \end{array}\right) & \label{eqn:LinOp1}
\end{align}
where the operator 
\begin{equation}
  {\mathcal K} = S_{yy} + 2S_y \partial_y
  \label{eqn:LinOp2}
\end{equation}
is skew-symmetric (when considered as acting on a space of $T$-periodic functions), while the operators 
\begin{align}
  & {\mathcal L}_+ = -\w - \partial_{yy} +
     (S_y)^2 - \zeta f(A^2) - 2 \zeta f'(A^2) A^2 & \\
  & {\mathcal L}_- = -\w -\partial_{yy} + 
     (S_y)^2 - \zeta f(A^2) &  \label{eqn:LinOp3}
\end{align}
are self-adjoint. This implies that the linearized operator can be written in the
Hamiltonian form as the product of a skew-symmetric and a
symmetric operator, and in particular as  in Equation \eqref{eqn:LinOp1}.  Note that in the trivial phase case $\kappa=0$, as in
the solitary wave case, the linearized operator
${\mathcal L} = {\mathcal J}\delta^2 \tilde H$ has only the
off-diagonal terms, ($\delta^2 \tilde H$ is diagonal) but
in the non-trivial phase case both operators are full.   

The basic idea of the next calculation is the following: we have constructed a traveling wave solution that depends on a number of
parameters, $\kappa,E,\omega,c,\zeta,\theta_0,y_0$. If the traveling wave equation does not depend explicitly on the parameter, then differentiation with respect to that parameter 
will give an element of the {\em formal} kernel of the
differential operator (e.g., ${\mathcal J} \delta^2 \tilde H {\bf u}_{E } = 0$; this is the case for $y_0,\ \theta_0, E,\ \kappa$). Otherwise, differentiation with respect to the parameter gives an element of the range of the differential operator (e.g., ${\mathcal J} \delta^2 \tilde H {\bf u}_{\omega} =- {\mathcal J}\delta\frac{\partial \tilde H}{\partial \omega}=\mathcal J \delta M={\bf u}_{\theta_0}$; this is the case for $c$, $\w$, $\zeta$).  
We note that the above description is somewhat impressionistic, due to the form of our perturbation. 
Typically functions generated by differentiating with respect to parameters will not belong to the proper function space, as the period and quasi-momentum will be functions of the parameters, but appropriate linear combinations {\em will} lie in the proper function space. We are computing elements of the tangent space to the manifold of solutions of fixed period and quasi-momentum.  
 
The results of this calculation can be summarized in the
following theorem.

\begin{theorem}\label{thm:NullSpace}
Suppose
\begin{itemize}
  \item The parameter values $(E,\kappa, \omega, \zeta ) $  lie in the
    open set $\Omega.$
  \item The nonlinearity $F(x)$ is linearly independent from $1,x,\frac{1}{x}$
  \item The amplitude $A(y)$ is non-constant. 
\end{itemize}

Define the linearized operator ${\mathcal J} \delta^2 \tilde H$ as in
(\ref{eqn:LinOp1}--\ref{eqn:LinOp3}), subject to $T$ periodic boundary
conditions, and the quantities
\begin{align*}
  & T(\kappa,E,\omega,\zeta) = \oint \frac{dA}{\sqrt{2 E - \w A^2 - \frac{\kappa^2}{A^2} - \zeta F(A^2)}}, & \\
  & \eta(\kappa,E,\omega,\zeta) =  \oint
    \frac{\kappa dA}{A^2\sqrt{2 E - \w A^2 - \frac{\kappa^2}{A^2} - \zeta F(A^2)}}, & \\
  & M(\kappa,E,\omega,\zeta) = \oint \frac{A^2 dA}{\sqrt{2 E - \w A^2 - \frac{\kappa^2}{A^2} - \zeta F(A^2)}}, & \\
  & K(\kappa,E,\omega,\zeta) = \oint \sqrt{2 E - \w A^2 - \frac{\kappa^2}{A^2} - \zeta F(A^2)} dA, &
\end{align*}
with the integral taken around an appropriate contour in the complex plane.  
Then the generalized kernel of ${\mathcal L}={\mathcal J} \delta^2 \tilde H$ generically takes
the form of the direct sum of two Jordan chains of length two: there exist two null
vectors ${\bf u}_{0,2}$ such that ${\mathcal J} \delta^2 \tilde H {\bf u}_{0,2} =
0$, and two  generalized null-vectors ${\bf u}_{1,3}$ such that  
${\mathcal J} \delta^2 \tilde H {\bf u}_{1,3} = {\bf u}_{0,2} $, with ${\bf u}_j$ given as follows:
\begin{align*}
  &{\bf u}_0 = \{T,\eta\}_{E,\kappa} \left(\begin{array}{c} 0 \\  A(y) \end{array}\right) &\\
  &{\bf u}_1 = \{T,\eta\}_{E,\kappa} \left(\begin{array}{c} A_\omega(y) \\ A(y) S_\omega(y)\end{array}\right) 
    + \{T,\eta\}_{\kappa,\omega} \left(\begin{array}{c} A_E(y) \\ 
        A(y) S_E(y)\end{array}\right) \\
  &\hskip 170pt + \{T,\eta\}_{\omega,E} \left(\begin{array}{c} A_\kappa(y) \\ A(y) S_\kappa(y)\end{array}\right) &\\
 &{\bf u}_2 = \{T,\eta\}_{E,\kappa} \left(\begin{array}{c} A_y(y) \\ 
       A(y)S_y(y)  \end{array}\right) &\\
 &{\bf u}_3 = \{T,\eta\}_{E,\kappa} \left(\begin{array}{c} 0 \\ 
        yA(y)/2\end{array}\right)  
  +\frac{TT_\kappa}{2} \left(\begin{array}{c} A_E(y) \\ A(y) S_E(y)\end{array}\right) \\
  & \hskip 170pt -  \frac{TT_E}{2} \left(\begin{array}{c} A_\kappa(y) \\
       A(y) S_\kappa(y)\end{array}\right).& 
\end{align*}
The left eigenvectors, which can be chosen as 
\begin{align*}
  &{\bf v}_0 = \bf{u}_1^t {\mathcal J} & \\
  &{\bf v}_1 = -{\bf u}_0^t {\mathcal J} & \\
  &{\bf v}_2 = {\bf u}_3^t {\mathcal J} & \\
  &{\bf v}_3 = -{\bf u}_2^t {\mathcal J} & \\
\end{align*}
satisfy
\begin{align*}
  & {\bf v}_{1,3} {\mathcal L} = 0 & \\
  & {\bf v}_{0,2} {\mathcal L} = {\bf v}_{1,3}. &
\end{align*}
The genericity conditions can be expressed 
as follows:
\begin{itemize}
  \item The kernel $ \{~ {\bf u}~ |~~ {\mathcal J} \partial^2 \tilde H {\bf u} = 0\}$
    is {\em exactly} two dimensional unless the quantity
    $\sigma = \{T,\eta\}_{E,\kappa}=\frac{\partial T}{\partial E}
    \frac{\partial \eta}{\partial \kappa} -\frac{\partial T}{\partial \kappa}
    \frac{\partial \eta}{\partial E} =0$ in which case the kernel
    is higher dimensional. More specifically 
    \begin{align}
 \nonumber     \dim(\ker({\mathcal L})) &= 2 +
      \dim\left(\ker\left(\begin{array}{cc}T_E & T_\kappa \\ 
          \eta_E & \eta_\kappa\end{array}\right)\right)& \\
          &= 2 + \dim\left(\ker\left(\begin{array}{cc}K_{EE} & K_{\kappa E} \\ 
          K_{\kappa E} & K_{\kappa\kappa}\end{array}\right)\right).&
    \end{align}
  \item Assuming that $\sigma  = \{T,\eta\}_{E,\kappa}\neq 0$ there are no
    Jordan chains of length greater than two as long as the determinant
    \begin{equation}
      \left|\begin{array}{cccc}
    K_{\kappa\kappa} & K_{\kappa E}& K_{\kappa\omega}  & T \\  K_{\kappa E}& K_{EE} & K_{E\omega} & 0 \\
    K_{\kappa\omega} & K_{E\omega}& K_{\omega\omega}  & 0 \\
    T  &  0 & 0   & -M 
  \end{array}\right| 
    \end{equation} 
    does not vanish. More specifically we have that
    \begin{multline}
      \dim(\ker({\mathcal J} \partial^2 \tilde H)^3 )-  \dim(\ker({\mathcal J} \partial^2 \tilde H)^2 )\\ = \dim\left(\ker \left( \begin{array}{cccc}
    K_{\kappa\kappa} & K_{\kappa E}& K_{\kappa\omega}  & T \\  K_{\kappa E}& K_{EE} & K_{E\omega} & 0 \\
    K_{\kappa\omega} & K_{E\omega}& K_{\omega\omega}  & 0 \\
    T  &  0 & 0   & -M 
  \end{array}\right)\right).
    \end{multline}
\end{itemize}
\end{theorem}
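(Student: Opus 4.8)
The plan is to realize all four Jordan vectors as parameter derivatives of the profile $\phi$, exploiting the dichotomy already flagged in the text: differentiating in a parameter on which the profile equation depends only implicitly ($\theta_0,y_0,E,\kappa$) produces a formal null vector, while differentiating in a Lagrange multiplier ($\omega,c$) produces a preimage under $\mathcal{L}$ of such a null vector. First I would record $\partial_{\theta_0}\phi=i\phi$ and $\partial_{y_0}\phi=\phi_y$; after dividing out the factored phase $e^{i(S+cy/2+\theta_0)}$ and setting $c=\theta_0=y_0=0$ these become $(0,A)^t$ and $(A_y,AS_y)^t$, which up to the scalar $\{T,\eta\}_{E,\kappa}$ are exactly ${\bf u}_0$ and ${\bf u}_2$. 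Because they come from genuine symmetries they are automatically $T$-periodic and lie in $\ker(\mathcal{L})$, so this step is immediate.

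For the Jordan tops I would use the abstract relations of the previous subsection, $\mathcal{L}\phi_\omega=\mathcal{J}\delta M={\bf u}_0$ and $\mathcal{L}\phi_c=\mathcal{J}\delta P={\bf u}_2$ (up to normalization), noting that at $c=0$ one has $\partial_c\phi=i(y/2)A\,e^{i(\cdots)}$, which supplies the secular $(0,yA/2)^t$ term visible in ${\bf u}_3$. The substance of the proof is that $\phi_\omega$ and $\phi_c$ are \emph{not} $T$-periodic, since $T$ and $\eta$ depend on the parameters, and must be corrected by adding multiples of the formal null directions $\phi_E,\phi_\kappa$ (which $\mathcal{L}$ annihilates) so as to cancel the secular growth without disturbing the Jordan relation. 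I would make this quantitative through Picard--Fuchs identities for the action integrals, using $K_E=T$, $K_\kappa=-\eta$, $K_\omega=-\tfrac12 M$, so that the nonperiodic part of any combination $\alpha\phi_E+\beta\phi_\kappa+\gamma\phi_\omega$ is controlled by the rows $(T_E,T_\kappa,T_\omega)$ and $(\eta_E,\eta_\kappa,\eta_\omega)$. The coefficients $\{T,\eta\}_{E,\kappa},\{T,\eta\}_{\kappa,\omega},\{T,\eta\}_{\omega,E}$ in ${\bf u}_1$ are precisely the $2\times2$ minors annihilating these two rows simultaneously, and the coefficients $TT_\kappa/2,-TT_E/2$ in ${\bf u}_3$ are those killing the secular part generated by the $yA/2$ term; verifying that these choices give honest $T$-periodic functions and \emph{simultaneously} satisfy $\mathcal{L}{\bf u}_1={\bf u}_0$ and $\mathcal{L}{\bf u}_3={\bf u}_2$ on the nose is the core computation. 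The left chain then follows formally from ${\bf v}=\pm{\bf u}^t\mathcal{J}$: using $\mathcal{J}^t=-\mathcal{J}$ and the self-adjointness of $\delta^2\tilde H$, the right relations transpose into ${\bf v}_{1,3}\mathcal{L}=0$ and ${\bf v}_{0,2}\mathcal{L}={\bf v}_{1,3}$.

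For the genericity of the kernel dimension I would argue that ${\bf u}\in\ker(\mathcal{L})$ iff it is a $T$-periodic combination of the four formal null directions $\phi_{\theta_0},\phi_{y_0},\phi_E,\phi_\kappa$; the first two are always admissible, while $\alpha\phi_E+\beta\phi_\kappa$ is $T$-periodic iff its secular part vanishes, i.e. iff $(\alpha,\beta)\in\ker\!\left(\begin{smallmatrix}T_E & T_\kappa\\ \eta_E & \eta_\kappa\end{smallmatrix}\right)$. This yields $\dim\ker(\mathcal{L})=2+\dim\ker\!\left(\begin{smallmatrix}T_E & T_\kappa\\ \eta_E & \eta_\kappa\end{smallmatrix}\right)$, and substituting $T=K_E,\ \eta=-K_\kappa$ turns this into the symmetric Hessian block of the statement (negating a row leaves the kernel unchanged); in particular the kernel is exactly two-dimensional precisely when $\sigma=\{T,\eta\}_{E,\kappa}\neq0$.

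For the chain length I would invoke the Fredholm alternative: a length-three chain through ${\bf u}_1$ or ${\bf u}_3$ exists iff $\mathcal{L}{\bf u}={\bf u}_{1,3}$ is solvable, i.e. iff some nonzero combination of ${\bf u}_1,{\bf u}_3$ pairs to zero against the entire left kernel $\{{\bf v}_1,{\bf v}_3\}=\{-{\bf u}_0^t\mathcal{J},-{\bf u}_2^t\mathcal{J}\}$. Evaluating these symplectic pairings reduces, via $\int A\,A_\bullet\,dy=\tfrac12 M_\bullet=-K_{\omega\bullet}$, the Maxwell relation $\partial P_i/\partial c_j=-\partial^2\tilde H/\partial c_i\partial c_j$, and the clean identity $P=\kappa T$ at $c=0$, to the entries of the Hessian of $K$ in $(\kappa,E,\omega)$ bordered by the period $T$ and mass $M$; assembling them produces exactly the displayed $4\times4$ matrix, and a Schur-complement computation identifies its corank with the number of length-three extensions, giving both the determinant criterion and the dimension count. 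The hard part throughout will be the Picard--Fuchs bookkeeping of the previous paragraph: establishing how $A_E,A_\kappa,A_\omega,S_E,S_\kappa,S_\omega$ fail to be periodic and checking that the stated minor coefficients cancel the secular terms while preserving the exact identities $\mathcal{L}{\bf u}_1={\bf u}_0,\ \mathcal{L}{\bf u}_3={\bf u}_2$---not merely modulo periodic functions---so that the same determinant data ($\sigma$ and the bordered $K$-Hessian) governs both the periodicity corrections and the two genericity conditions.
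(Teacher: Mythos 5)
Your proposal is correct and mirrors the paper's own proof step for step: the kernel vectors from the $\theta_0,y_0$ symmetries; the Jordan tops from the $\omega$ and $c$ derivatives (the latter supplying the secular $(0,yA/2)^t$ term) periodized by adding $E,\kappa$ derivatives whose coefficients are exactly the stated $2\times2$ minors; the first genericity condition from the jump/secular data over one period reducing to $\ker\left(\begin{smallmatrix}T_E & T_\kappa\\ \eta_E & \eta_\kappa\end{smallmatrix}\right)$ with $T=K_E$, $\eta=-K_\kappa$; and the second from the Fredholm pairing of ${\bf u}_1,{\bf u}_3$ against the left kernel ${\bf v}_1,{\bf v}_3$, reduced via $M=-2K_\omega$ to the bordered Hessian of $K$. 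The only cosmetic difference is that where you invoke a Schur-complement computation to identify the $2\times2$ Gram determinant with (a multiple of) the $4\times4$ determinant, the paper uses the equivalent Dodgson--Jacobi--Desnanot condensation identity, so the two arguments coincide in substance.
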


\begin{proof}
From standard ODE arguments we have that the traveling wave solutions
are (for parameter values in $\Omega$) $C^1$ functions of the
parameters. By differentiating the equation
\[
  A_y^2 = R(A;E,\kappa,\omega,\zeta),
\]
we have that $A_y,A_E,A_\kappa$ solve linear inhomogeneous
equations with linearly independent right-hand sides
\begin{align}
  &  2 A_y A_{yy} = R_y(A) A_y & \\
  &  2 A_y A_{Ey} = R_y(A) A_E + 2 & \\
  &  2 A_y A_{\kappa y} = R_y(A) A_\kappa - 2\frac{\kappa}{A^2} & 
\end{align}
The functions $A_y,A_E,A_\kappa$ must be linearly independent since if
there existed a linear combination of $A_y,A_E,A_\kappa$ that vanished
then the corresponding linear combination of the above ordinary
differential equations would vanish. Since the righthand sides are $0,2$
and $-2\frac{\kappa}{A^2}$ and $1$ and $\frac{1}{A^2}$ are assumed
linearly independent this would imply that $A_y$ is identically zero
and thus $A(y)$ constant, which we have assumed to be not true.  
Differentiating the traveling wave equations with respect
to the parameters $y_0,\theta_0,E,\kappa$ shows that four solutions to the
ordinary differential equation ${\mathcal L}{\bf u} = 0 $ are given by  
\begin{subequations}
\begin{align}
   & {\mathcal L} \left( \begin{array}{c} 0 \\ A(y) \end{array} \right) =0 & \label{eqn:ODESOL1} \\
   & {\mathcal L} \left( \begin{array}{c} A_y(y) \\ A(y) S_y(y)   \end{array} \right)  = 0& \\
   & {\mathcal L} \left( \begin{array}{c} A_E(y) \\ A(y) S_E(y)  \end{array} \right) =0& \label{eqn:ODESOL3}  \\
   & {\mathcal L} \left( \begin{array}{c} A_\kappa(y) \\ A(y) S_\kappa(y) \end{array} \right)=0. & \label{eqn:ODESOL4}
\end{align}\label{eqn:ODESOLs}
\end{subequations}
Since there is no linear combination of
\[
    \left( \begin{array}{c} 0 \\ A(y) \end{array} \right),~~
    \left( \begin{array}{c} A_y(y) \\ A(y) S_y(y)  \end{array} \right), ~~
    \left( \begin{array}{c} A_\kappa(y) \\ A(y)S_\kappa(y) \end{array} \right),~~
    \left( \begin{array}{c} A_E(y) \\ A(y)S_E(y) \end{array} \right)
\]
for which even the first component vanishes then the four solutions are linearly independent.  
Note that these are four linearly independent solutions to the {\em ordinary
differential equation} ${\mathcal L}{\bf u}=0$. This generally {\em does not} give four
linearly independent solutions to the operator equation ${\mathcal L}{\bf u}
= 0$, as the operator equation must be considered on some function space and these functions may not
satisfy appropriate boundary conditions. The first two quantities, 
$\left( \begin{array}{c} 0 \\ A(y) \end{array} \right), \left( \begin{array}{c} A_y(y) \\ A(y) S_y(y)  \end{array} \right)$ are periodic, but the other two are typically not periodic since the period $T$ and the net phase $S(T) - S(0) = \eta $ are functions of $E,\ \kappa,\ \omega,\ \zeta$. Additionally differentiating with respect
to the quantities $\omega,\ c$ gives the relations
\begin{subequations}
\begin{align}
    &{\mathcal L} \left( \begin{array}{c} A_\omega(y) \\ A(y) S_\omega(y)  \end{array}\right) = \left( \begin{array}{c} 0 \\ A(y) \end{array} \right) & \\
    &{\mathcal L} \left( \begin{array}{c} 0 \\ yA(y)/2  \end{array}\right) = \left( \begin{array}{c} A_y(y) \\ A(y) S_y(y)  \end{array} \right) 
\end{align}\label{eqn:ODESOLSgeneral}
\end{subequations}
These are again not in the generalized kernel of ${\mathcal L}$ since
they are not periodic. However, one can find linear combinations of
either of the above vectors with 
$\left( \begin{array}{c} A_E(y) \\ A(y)S_E(y)  \end{array} \right)$
and $\left( \begin{array}{c} A_\kappa(y) \\ A(y)S_\kappa(y)  \end{array}\right)$ 
which are periodic. The details are given in Appendix \ref{ap:eigenfunctions}. 

Regarding the genericity conditions, we again note that equations
(\ref{eqn:ODESOL1}--\ref{eqn:ODESOL4}) give four linearly
independent solutions to the ODE ${\mathcal L} {\bf u} = 0$, the first two
being $T$ periodic. The traveling waves are chosen so
that $A'(0)=0$ for all parameter values. The change in
the second two solutions and their derivatives over one period
is given by 
\begin{align}
    & \left.\left( \begin{array}{c} A_E \\ A(y) S_E  \end{array} \right)\right|_0^T  
        =\left(\begin{array}{c}0 \\ A(0) \left(\eta_E  - \frac{\kappa}{A(0)^2}T_E\right) \end{array}\right) & \label{eqn:Jump1}\\
    & \left.\left( \begin{array}{c} A_E \\ A(y) S_E  \end{array} \right)_y\right|_0^T
        =  \left(\begin{array}{c} -T_E A''(0) \\ 0\end{array}\right) &\label{eqn:Jump2}\\
    & \left.\left( \begin{array}{c} A_\kappa \\ A(y) S_\kappa  \end{array} \right)\right|_0^T
        =\left(\begin{array}{c}0 \\ A(0) \left(\eta_\kappa - \frac{\kappa}{A(0)^2}T_\kappa\right) \end{array}\right) & \label{eqn:Jump3}\\
    & \left.\left( \begin{array}{c} A_\kappa \\ A(y) S_\kappa  \end{array} \right)_y\right|_0^T
        = \left(\begin{array}{c} -T_\kappa A''(0) \\ 0\end{array}\right) &\label{eqn:Jump4}
\end{align}
Note that $A(0)\neq 0$ and $A''(0)\neq 0$ so that the only way one can
construct a non-zero periodic linear combination of the solutions
(\ref{eqn:ODESOL3}) and (\ref{eqn:ODESOL4}) is if the determinant 
\[
  \sigma = \left| \begin{array}{cc} T_E & T_\kappa\\ \eta_E & \eta_{\kappa}\end{array}\right| = 0,
\]
and further the number of periodic solutions that can be
constructed is equal to the rank of the matrix $\left(\begin{array}{cc} T_E & T_\kappa \\ \eta_E & \eta_\kappa \end{array}\right)$. The same argument
applies to the left kernel of ${\mathcal L}$, with the same
genericity condition.

Assuming that the genericity condition $T_E \eta_\kappa - T_\kappa
\eta_E\neq 0$ is satisfied and the left and
right kernels of ${\mathcal L}$ are two dimensional we next show that
there is typically nothing in the next generalized kernel. In order
to have a element of $\ker({\mathcal L}^3) / \ker({\mathcal L}^2)$ we
must have a linear combination of ${\bf u}_1$ and ${\bf u}_3$ that lies in the
range of ${\mathcal L}$. By the Fredholm alternative this is
equivalent to finding a linear combination of ${\bf u}_1$ and ${\bf u}_3$ that is
orthogonal to ${\bf v}_1$ and ${\bf v}_3$. This, in turn, is equivalent to the
gram determinant  
\[
    \left|\begin{array}{cc}{\bf v}_1 {\bf u}_1 & {\bf v}_3 {\bf u}_1\\ {\bf v}_1 {\bf u}_3 & {\bf v}_3 {\bf u}_3 \end{array}\right|
    = \frac{\sigma^2}{4} \left|\begin{array}{cc} 
    \{\eta,T,M\}_{\kappa,E,\omega} & \frac{T}{2}\{T,M\}_{\kappa,E} \\  
    \frac{T}{2}\{T,M\}_{\kappa,E} & -\frac{T^2 T_E}{2} + \frac{M}{2}\{\eta,T\}_{\kappa,E} \end{array}\right|
\] 
being zero, where $\sigma =\{\eta,T\}_{\kappa,E}$.  The elements of the above gram matrix are computed in Appendix \ref{ap:matrixelements}. To relate this to the determinant of the four by four matrix we remind the reader of the identities $T=K_E;M = -2 K_\omega; \eta = - K_{\kappa}$. We next use the Dodgson-Jacobi-Desnanot condensation identity \cite{Dodgson}, which was proved by Dodgson prior to his well-known early work on
mirror symmetry \cite{Mirror}. The condensation identity says that if
$D$ is the determinant of a matrix, and $D_i^j$ is the minor determinant with the
$i^{th}$ row and $j^{th}$ column removed then   
\[
  D D_{ij}^{ij} = D_i^i D_j^j - D_i^j D_j^i. 
\]
Applying this to the $4\times 4$ determinant 
\begin{equation}
      D=\left|\begin{array}{cccc}
    K_{\kappa\kappa} & K_{\kappa E}& K_{\kappa\omega}  & T \\  K_{\kappa E}& K_{EE} & K_{E\omega} & 0 \\
    K_{\kappa\omega} & K_{E\omega}& K_{\omega\omega}  & 0 \\
    T  &  0 & 0   & -M 
  \end{array}\right| 
    \end{equation} 
    with $i=3$ and $j=4$ we find that 
    \begin{align*}
        &D_4^4 = \{K_\kappa,K_E,K_\omega\}_{\kappa,E,\omega}= -\frac{1}{\sigma} {\bf v}_1 {\bf u}_1,& \\
        & D_3^4 = D_4^3 = T\{K_E,K_\omega\}_{\kappa,E} = \frac{2}{\sigma} {\bf v}_3{\bf u}_1 =  \frac{2}{\sigma} {\bf v}_1{\bf u}_3,& \\
        & D_3^3 = -M \{K_\kappa,K_E\}_{\kappa,E} - T^2 K_{EE}= -\frac{4}{\sigma}{\bf v}_3 {\bf u}_3,&  \\
    &\left|\begin{array}{cc}{\bf v}_1 {\bf u}_1 & {\bf v}_3 {\bf u}_1\\ {\bf v}_1 {\bf u}_3 & {\bf v}_3 {\bf u}_3 \end{array}\right|
        = \frac{\sigma^3}{4} D.&
\end{align*}
\end{proof}
\begin{remark}
As was discussed earlier we always move to the Galilean frame in which $c=0.$ This is not strictly necessary---one can carry out the entire calculation as a function of $c$. This is somewhat algebraically messy, as many of the quantities in question become functions of $c$. It is somewhat theoretically cleaner however, so we remark that if one maintains the full $c$ dependence the non-existence of a higher generalized kernel can be shown to be equivalent to the matrix $\frac{\partial^2 K}{\partial (\kappa,E,\omega,c)^2 }$ being non-singular. The quantity $D$ is, modulo some unimportant constants, equal to the determinant of $\frac{\partial^2 K}{\partial (\kappa,E,\omega,c)^2 }$.

    We would now like to introduce a few  more pieces of notation.
    By differentiating the traveling wave
    equation with respect to $\zeta$, we have the identity
    \[
        {\mathcal L} \left(\begin{array}{c} A_\zeta \\ A S_\zeta \end{array}\right) =  \left(\begin{array}{c} 0 \\ f(A^2)A \end{array}\right). 
    \]
    The functions $A_\zeta, S_\zeta$  are typically not periodic but
    they can be periodized in the same way as was done
    previously for the elements of the generalized
    kernel, by adding linear combinations of
    $A_\kappa,A_E$, etc. We define the following
    quantities  
    \begin{equation}\arraycolsep=18pt\def\arraystretch{1.6}
    \begin{array}{ccc}
     \gamma  = \{T,\eta\}_{\kappa,\omega} & \rho = \{T,\eta\}_{\omega,E} & \tau = TT_\kappa/2 \\
    \nu  = -TT_E/2
        & \xi = \{T,\eta\}_{\kappa,\zeta} 
        & \psi  = \{T,\eta\}_{\zeta,E}.   
    \end{array} \label{eqn:determinants}
    \end{equation}
    Given these definitions we have
    \begin{subequations}
   \begin{align}
        &{\bf u}_0=\sigma\left(\begin{array}{c}0\\ A\end{array}\right)\\
        &{\bf u}_1=\gamma\left(\begin{array}{c}A_E\\AS_E\end{array}\right)
            +\rho\left(\begin{array}{c}A_\kappa\\AS_\kappa\end{array}\right)
            +\sigma\left(\begin{array}{c}A_\w\\AS_\w\end{array}\right)\\
        &{\bf u}_2=\sigma\left(\begin{array}{c}A_y\\AS_y\end{array}\right)\\
        &{\bf u}_3=\tau\left(\begin{array}{c}A_E\\AS_E\end{array}\right)
            +\nu\left(\begin{array}{c}A_\kappa\\AS_\kappa\end{array}\right)
            +\sigma\left(\begin{array}{c}0\\yA/2\end{array}\right)\\
        &{\bf v}_0=\gamma\left(\begin{array}{c}AS_E\\-A_E\end{array}\right)^t
            +\rho\left(\begin{array}{c}AS_\kappa\\-A_\kappa\end{array}\right)^t
            +\sigma\left(\begin{array}{c}AS_\w\\-A_\w\end{array}\right)^t\\
        &{\bf v}_1=-\sigma\left(\begin{array}{c}A\\0\end{array}\right)^t\\
        &{\bf v}_2=\tau\left(\begin{array}{c}AS_E\\-A_E\end{array}\right)^t
            +\nu\left(\begin{array}{c}AS_\kappa\\-A_\kappa\end{array}\right)^t
            +\sigma\left(\begin{array}{c}yA/2\\0\end{array}\right)^t\\
        &{\bf v}_3=-\sigma\left(\begin{array}{c}AS_y\\-A_y\end{array}\right)^t 
    \end{align}
    \end{subequations}
    It is also convenient to construct two more functions that will be
    useful in the remainder of the paper. These will not belong to the kernel or the
    generalized kernel, but will instead represent a particular function
    in the range of the linearized operator, together with its pre-image. These are given by
    \begin{align*}
        &{\bf u}_4=\sigma\left(\begin{array}{c}0\\ f(A^2)A\end{array}\right)& \\
        &{\bf u}_5=\xi\left(\begin{array}{c}A_E\\AS_E\end{array}\right)
            +\psi\left(\begin{array}{c}A_\kappa \\AS_\kappa \end{array}\right)
            +\sigma\left(\begin{array}{c}A_\zeta\\AS_\zeta\end{array}\right)\\
    \end{align*}

    We see that ${\bf u}_4$ is periodic, since $A(y)$ is periodic, and
    ${\bf u}_5$ is periodic and satisfies ${\mathcal L}{\bf u}_5 = {\bf u}_4$.  
\end{remark}
  
At this point we have constructed the generalized kernel of the
linearization of the NLS equation around a periodic traveling wave
solution. We know that generically the kernel consists of a direct
sum of two Jordan chains of length two, one associated with each
commuting flow. The next step in the calculation is to understand
how this four dimensional generalized kernel breaks up under a
perturbation. This perturbation can represent either slow
modulations in a transverse spatial direction or perturbations in
the boundary conditions due to a change in the quasi-momentum or
Floquet exponent.

\section{Modulational Stability Results}

\subsection{Perturbation of a Jordan Subspace}
  
In the prior section we established the structure of the
kernel of the linearized operator. The purpose of this section 
is to derive the fundamental perturbation result. Generically under
a perturbation of size $\mu$ a Jordan block of size $d$ will have a representation
in terms of a Puiseaux series in powers of
$\mu^{\frac{1}{d}}$. The breakup of a Jordan block under
perturbation has been considered by many authors including
Lidskii \cite{Lidskii}, Baumg{\"a}rtel \cite{Baumgartel}, Moro, Burke
and Overton \cite{MBO} and (in the context of stability of
Hamiltonian systems) Maddocks and Overton \cite{Maddocks.Overton}.
In many of these works the perturbations are assumed to be in some
sense generic, and the resulting series are in the form of the
aforementioned Puiseaux series. In the problems of interest here the
perturbations are highly non-generic, and admit a standard power series
representation in powers of $\mu$. The first result covers the
situation of interest to our problem.  
  
\begin{prop} \label{prop:Breakup}
    Suppose that an operator ${\mathcal L}$  with compact resolvent
    has a $d$-dimensional
    kernel spanned by $\{{\bf u}_{2j}\}_{j=0}^{d-1}$ and a $d$-dimensional first
    generalized kernel spanned by $\{{\bf u}_{2j+1}\}$ satisfying
    ${\mathcal L} {\bf u}_{2j+1} = {\bf u}_{2j}$, and similarly a left basis
    satisfying ${\bf v}_{2j+1} {\mathcal L} = 0$, ${\bf v}_{2j} {\mathcal L} =
    {\bf v}_{2j+1}$.  Consider a perturbation of the form 
    ${\mathcal L} + \mu {\mathcal L}^{(1)} + \mu^2 {\mathcal  L}^{(2)}$, 
    where ${\mathcal L}^{(1)} (\lambda - {\mathcal L})^{-1}$, 
    ${\mathcal L}^{(1)} (\lambda - {\mathcal L})^{-1} {\mathcal L}^{(1)}$ 
    and ${\mathcal L}^{(2)}$ are bounded operators, and suppose that the first order perturbation
    satisfies the additional conditions that
    \begin{equation}
      {\bf v}_{2j+1} {\mathcal L}^{(1)} {\bf u}_{2k} = 0 \qquad \forall j,k=0,...,d-1.
      \label{eqn:Solve}
    \end{equation}
    Then, to leading order in $\mu$, the $2d$ dimensional generalized
    kernel breaks up into $2d$ eigenspaces. The eigenvalues are given
    to leading order by
    \[
        \lambda(\mu) = \lambda_1 \mu + o(\mu)
    \]
    where $\lambda_1$ is a root of the polynomial
    \begin{equation}
        \det(\lambda_1^2 {\bf M}^{(2)} + \lambda_1 {\bf M}^{(1)} + {\bf M}^{(0)}) = 0 
        \label{eqn:QuadPencil}
    \end{equation}
    and ${\bf M}^{(i)}$ are $d \times d$ matrices defined as follows:
    \begin{align*}
        &{\bf M}^{(2)}_{j,k} = {\bf v}_{2j+1} {\bf u}_{2k+1} & \\
        &{\bf M}^{(1)}_{j,k} = -{\bf v}_{2j+1} {\mathcal L}^{-1} {\mathcal L}^{(1)} {\bf u}_{2k}
            - {\bf v}_{2j+1} {\mathcal L}^{(1)}{\mathcal L}^{-1} {\bf u}_{2k} & \\
        & ~~~~=- {\bf v}_{2j} {\mathcal L}^{(1)} {\bf u}_{2k} -
            {\bf v}_{2j+1} {\mathcal L}^{(1)} {\bf u}_{2k+1} & \\
        & {\bf M}^{(0)}_{j,k} = {\bf v}_{2j+1} {\mathcal L}^{(1)} {\mathcal
            L}^{-1} {\mathcal L}^{(1)} {\bf u}_{2k}  -{\bf v}_{2j+1} {\mathcal L}^{(2)}  {\bf u}_{2k} 
    \end{align*}
    and ${\mathcal L}^{-1}$ is the Moore-Penrose pseudo-inverse.
\end{prop}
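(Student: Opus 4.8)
The plan is to set up a Lyapunov–Schmidt / Puiseux-type perturbation expansion on the $2d$-dimensional generalized kernel and track the leading-order balance. I would seek eigenpairs of the perturbed operator $\mathcal{L}+\mu\mathcal{L}^{(1)}+\mu^2\mathcal{L}^{(2)}$ in the form
\begin{equation*}
  \mathbf{u}(\mu) = \mathbf{u}_0 + \mu\,\mathbf{u}^{(1)} + \mu^2\,\mathbf{u}^{(2)} + \cdots,
  \qquad
  \lambda(\mu) = \lambda_1\mu + \lambda_2\mu^2 + \cdots,
\end{equation*}
where the leading term $\mathbf{u}_0 = \sum_{k} a_k\,\mathbf{u}_{2k} + \sum_k b_k\,\mathbf{u}_{2k+1}$ is an unknown combination of the full generalized kernel (both the kernel proper and the first generalized kernel). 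Substituting into $(\mathcal{L}+\mu\mathcal{L}^{(1)}+\mu^2\mathcal{L}^{(2)})\mathbf{u}(\mu)=\lambda(\mu)\mathbf{u}(\mu)$ and collecting powers of $\mu$ gives a hierarchy of equations whose solvability I would enforce against the left generalized kernel $\{\mathbf{v}_{2j},\mathbf{v}_{2j+1}\}$ via the Fredholm alternative. The compact-resolvent hypothesis guarantees that $\mathcal{L}$ is Fredholm of index zero, so that the Moore–Penrose pseudo-inverse $\mathcal{L}^{-1}$ is well defined on the range and the higher-order corrections can be solved iteratively.

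The decisive structural input is that the $\mu^0$ balance forces $\mathbf{u}_0$ to lie in $\ker(\mathcal{L})$, i.e.\ the $b_k$ contributions must be handled at next order. Indeed, from $\mathcal{L}\mathbf{u}_0 = 0$ at leading order one learns that the combination $\sum_k a_k \mathbf{u}_{2k}$ is annihilated by $\mathcal{L}$ while the $\mathbf{u}_{2k+1}$ pieces feed forward: $\mathcal{L}(\sum b_k \mathbf{u}_{2k+1}) = \sum b_k \mathbf{u}_{2k}$. The key point is that the eigenvalue enters at order $\mu$, so the genuine balance is at order $\mu^1$, where the $\lambda_1$ term must offset both the action of $\mathcal{L}^{(1)}$ on $\mathbf{u}_0$ and the $\mathcal{L}$ action on $\mathbf{u}^{(1)}$. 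Pairing the $\mu^1$ equation with $\mathbf{v}_{2j+1}$ (which annihilates $\mathcal{L}$ from the left) kills the unknown correction $\mathbf{u}^{(1)}$ and produces $d$ linear relations among the $a_k,b_k$; these are exactly where hypothesis \eqref{eqn:Solve}, ${\bf v}_{2j+1}\mathcal{L}^{(1)}{\bf u}_{2k}=0$, is needed to ensure that the first-order contribution does not prematurely shift the eigenvalue and destroy the clean $\lambda = \lambda_1\mu$ scaling. Solving the $\mu^1$ equation for $\mathbf{u}^{(1)}$ then requires inverting $\mathcal{L}$ on the complement of its kernel — this is where $\mathcal{L}^{-1}$ appears — and the inhomogeneous term is $(\lambda_1 - \mathcal{L}^{(1)})\mathbf{u}_0$ projected onto the range.

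Iterating to order $\mu^2$ and again testing against $\mathbf{v}_{2j+1}$ yields the final solvability condition. The terms assemble as follows: the $\lambda_1^2\,\mathbf{v}_{2j+1}\mathbf{u}_{2k+1}$ contribution comes from pairing the eigenvalue expansion against the $\mathbf{u}_{2k+1}$ pieces of $\mathbf{u}_0$ (giving ${\bf M}^{(2)}$); the cross terms $-{\bf v}_{2j+1}\mathcal{L}^{-1}\mathcal{L}^{(1)}{\bf u}_{2k} - {\bf v}_{2j+1}\mathcal{L}^{(1)}\mathcal{L}^{-1}{\bf u}_{2k}$ arise from the interaction of the $\mu^1$ correction $\mathbf{u}^{(1)}$ with $\mathcal{L}^{(1)}$ and the eigenvalue (giving ${\bf M}^{(1)}$, whose equivalent form follows from the identities $\mathbf{v}_{2j+1}\mathcal{L}^{-1} = \mathbf{v}_{2j}$ on the range and $\mathcal{L}^{-1}\mathbf{u}_{2k} = \mathbf{u}_{2k+1}$); and the pure zeroth-eigenvalue terms ${\bf v}_{2j+1}\mathcal{L}^{(1)}\mathcal{L}^{-1}\mathcal{L}^{(1)}{\bf u}_{2k} - {\bf v}_{2j+1}\mathcal{L}^{(2)}{\bf u}_{2k}$ come from the second-order perturbation and the quadratic self-interaction of $\mathcal{L}^{(1)}$ (giving ${\bf M}^{(0)}$). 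Requiring a nontrivial solution for the vector of coefficients produces the vanishing of $\det(\lambda_1^2{\bf M}^{(2)}+\lambda_1{\bf M}^{(1)}+{\bf M}^{(0)})$.

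I expect the main obstacle to be bookkeeping the order-$\mu$ correction $\mathbf{u}^{(1)}$ cleanly — specifically, verifying that the pseudo-inverse $\mathcal{L}^{-1}$ commutes correctly with the projections so that the two equivalent forms of ${\bf M}^{(1)}$ agree, and confirming that condition \eqref{eqn:Solve} is precisely what decouples the first-order shift from the quadratic pencil. The analytic subtlety of justifying the convergent (rather than merely formal) Puiseux expansion, using boundedness of $\mathcal{L}^{(1)}(\lambda-\mathcal{L})^{-1}$, $\mathcal{L}^{(1)}(\lambda-\mathcal{L})^{-1}\mathcal{L}^{(1)}$, and $\mathcal{L}^{(2)}$, is a secondary but necessary step; I would invoke the standard Kato–Rellich perturbation machinery for isolated eigenvalue clusters to legitimize the expansion and confirm that exactly $2d$ eigenvalues emerge from the degenerate cluster.
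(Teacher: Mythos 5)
Your formal scheme is sound and, once the bookkeeping is repaired, its solvability conditions reproduce exactly the pencil \eqref{eqn:QuadPencil} with the three matrices as stated; but your route is genuinely different from the paper's, and the step you defer to ``standard Kato--Rellich perturbation machinery'' is a real gap, not a secondary one. The paper never expands the eigenfunction in powers of $\mu$: it decomposes the space as $\ker_0({\mathcal L})+\ker_1({\mathcal L})+\ran({\mathcal L}^2)$, writes ${\mathcal L},{\mathcal L}^{(1)},{\mathcal L}^{(2)}$ in block form, eliminates the range component and then the generalized-kernel component by two Schur complements to obtain an \emph{analytic} matrix pencil ${\bf N}(\lambda,\mu)$, and applies the Weierstrass preparation theorem to $\det{\bf N}(\lambda,\mu)=0$ to conclude $\lambda(\mu)=\lambda_1\mu+O(\mu^2)$. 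That last step is where the integer-power (rather than Puiseux) behavior is actually \emph{proved}. Your plan assumes the ansatz $\lambda=\lambda_1\mu+\lambda_2\mu^2+\cdots$, ${\bf u}={\bf u}_0+\mu{\bf u}^{(1)}+\cdots$ from the outset, which for a defective eigenvalue is precisely the point in question: analytic perturbation theory for an isolated cluster guarantees only Puiseux branches and conservation of total algebraic multiplicity, and does not by itself exclude $\mu^{1/2}$ scalings or certify that the formal solvability hierarchy accounts for all $2d$ branches. You correctly identify \eqref{eqn:Solve} as the structural reason the fractional powers are absent, but turning that observation into a proof requires a Newton-polygon or preparation-theorem argument applied to a reduced characteristic determinant --- which is exactly what the paper's Schur-complement reduction supplies and your sketch omits.

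There is also an internal inconsistency in your expansion worth fixing. You concede that the $O(1)$ equation forces ${\bf u}_0\in\ker({\mathcal L})$, so ${\bf u}_0=\sum_k a_k{\bf u}_{2k}$ with all $b_k=0$; yet you later attribute the $\lambda_1^2\,{\bf v}_{2j+1}{\bf u}_{2k+1}$ entries of ${\bf M}^{(2)}$ to ``the ${\bf u}_{2k+1}$ pieces of ${\bf u}_0$.'' Those pieces first appear in ${\bf u}^{(1)}$: the $O(\mu)$ equation ${\mathcal L}{\bf u}^{(1)}=\lambda_1{\bf u}_0-{\mathcal L}^{(1)}{\bf u}_0$ is solvable because ${\bf v}_{2j+1}{\bf u}_{2k}=0$ together with \eqref{eqn:Solve} makes its solvability conditions \emph{vacuous} (so, contrary to your claim, the $\mu^1$ pairing produces no relations among the coefficients), and it yields ${\bf u}^{(1)}=\lambda_1\sum_k a_k{\bf u}_{2k+1}-{\mathcal L}^{-1}{\mathcal L}^{(1)}{\bf u}_0$ modulo $\ker({\mathcal L})$. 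The entire pencil then arises at the single order $\mu^2$: pairing with ${\bf v}_{2j+1}$, the term $-\lambda_1{\bf v}_{2j+1}{\bf u}^{(1)}$ generates $\lambda_1^2{\bf M}^{(2)}$ and half of $\lambda_1{\bf M}^{(1)}$, while ${\bf v}_{2j+1}{\mathcal L}^{(1)}{\bf u}^{(1)}$ and ${\bf v}_{2j+1}{\mathcal L}^{(2)}{\bf u}_0$ give the other half of ${\bf M}^{(1)}$ and all of ${\bf M}^{(0)}$, the kernel ambiguity in ${\bf u}^{(1)}$ dropping out by \eqref{eqn:Solve} and ${\bf v}_{2j+1}{\bf u}_{2k}=0$. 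With these corrections your coefficient matrices agree with the proposition's, and your identities ${\bf v}_{2j+1}{\mathcal L}^{-1}={\bf v}_{2j}$ and ${\mathcal L}^{-1}{\bf u}_{2k}={\bf u}_{2k+1}$ on the relevant subspaces correctly reconcile the two forms of ${\bf M}^{(1)}$; what remains missing is only, but essentially, the analytic justification described above.
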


\begin{remark}
    A few remarks on the previous proposition. First note that
    condition (\ref{eqn:Solve})
    is a necessary and sufficient condition for the quantity 
    ${\bf v}_{2j+1} {\mathcal L}^{(1)} {\mathcal L}^{-1} {\mathcal L}^{(1)} {\bf u}_{2k}$ 
    to be well-defined. Also note that, compared with self-adjoint perturbation theory there is a
    certain amount of ``mixing'' of orders: certain terms that are
    formally second order in $\mu$ in the expansion of the operator
    contribute to first order in   $\mu$ in the eigenvalue. This is
    typical when one perturbs a Jordan block due to the fact that
    the range has a non-trivial intersection with the kernel. 
\end{remark}
\begin{proof}

We give an argument based on a Schur complement calculation and
the Weierstrauss preparation theorem.

Since the generalized kernel of the operator ${\mathcal L}$ has  Jordan chains of length two we can decompose our function space as $\ker_0({\mathcal L}) + \ker_1({\mathcal L}) + \ran({\mathcal L}^2)$. If ${\bf u}$ is an eigenfunction of the perturbed problem we will denote the projections of ${\bf u}$ onto $\ker_0({\mathcal L})$, $\ker_1({\mathcal L})$ and $ \ran({\mathcal L}^2)$ by ${\bf x}_0, {\bf x}_1$ and ${\bf x}_2$ respectively. We will express  the operators ${\mathcal L},{\mathcal L}^{(1)}$ and ${\mathcal L}^{(2)}$ in block form in this basis as follows:
\begin{align*}
& {\mathcal L} = \left(\begin{array}{ccc}{\bf0} & \boxed{{\bf A}_{01}} & {\bf0} \\ {\bf0} & {\bf0} & {\bf0} \\ {\bf0} & {\bf0} & \boxed{{\bf A}_{22}}\end{array}\right) ~~~  {\mathcal L}^{(1)} = \left(\begin{array}{ccc}\boxed{{\bf B}_{00}} & {\bf B}_{01} & {\bf B}_{02} \\ {\bf0} & \boxed{{\bf B}_{11}} & \boxed{{\bf B}_{12}} \\ \boxed{{\bf B}_{20}} & {\bf B}_{21} & {\bf B}_{22}\end{array}\right)& \\
& {\mathcal L}^{(2)}=\left(\begin{array}{ccc}{\bf C}_{00} & {\bf C}_{01} & {\bf C}_{02} \\ \boxed{{\bf C}_{10}} & {\bf C}_{11} & {\bf C}_{12} \\ {\bf C}_{20} & {\bf C}_{21} & {\bf C}_{22}\end{array}\right) ~~~ {\bf u} = \left(\begin{array}{c}
     {\bf x}_0  \\
     {\bf x}_1 \\
     {\bf x}_2
\end{array}\right)
\end{align*}
The Jordan structure of ${\mathcal L}$ implies that the indicated blocks are zero.  
The fact that the $(1,0)$ block of ${\mathcal L}^{(1)}$ is zero is forced by Equation (\ref{eqn:Solve}). The rest of the entries are arbitrary, but only the boxed ones contribute to the leading order perturbation theory, as will be seen.  We begin with the perturbed eigenvalue problem
\[
\left({\mathcal L} + \mu {\mathcal L}^{(1)} + \mu^2 {\mathcal L}^{(2)}\right) {\bf u} = \lambda {\bf u}.
\]
Given the definitions above this is equivalent to the system 
\begin{align*}
    & {\bf A}_{01}{\bf x}_1 + \mu ({\bf B}_{00} {\bf x}_0 + {\bf B}_{01} {\bf x}_1 + {\bf B}_{02} {\bf x}_2) +O(\mu^2)= \lambda {\bf x}_0&\\
    & \mu ({\bf B}_{11} {\bf x}_1 + {\bf B}_{12} {\bf x}_2) +\mu^2 ({\bf C}_{10}{{\bf x}_0}+{\bf C}_{11}{\bf x}_1+{\bf C}_{12}{\bf x}_2) = \lambda {\bf x}_1 &\\
    & {\bf A}_{22} {\bf x}_2 + \mu ({\bf B}_{20} {\bf x}_0+ \mu {\bf B}_{21} {\bf x}_1 + \mu {\mathcal B}_{22} {\bf x}_2)+O(\mu^2) = \lambda {\bf x}_2.
\end{align*}
A judicious application of elimination, or the Schur complement formula, gives
\begin{align*}
    & ({\bf A}_{01} + \mu {\bf B}_{01}+O(\mu^2)) {\bf x}_1 = (\lambda - \mu {\bf B}_{00}+O(\mu^2)) {\bf x}_{0}&\\
    & (\lambda - \mu {\bf B}_{11} +O(\mu^2)) {\bf x}_1 = \mu^2 ({\bf C}_{10} -{\bf B}_{12} {\bf R} {\bf B}_{20}) {\bf x}_0+O(\mu^3),&
\end{align*}
where ${\bf R}={\bf R}(\lambda,\mu) = ( {\bf A}_{22} - \lambda+\mu {\bf B}_{22})^{-1}$. Note that by assumption ${\bf A}_{22}^{-1} {\bf B}_{22}$ is bounded, and thus $({\bf A}_{22} - \lambda+\mu {\bf B}_{22})$ is invertible for sufficiently small $\mu,\lambda$. 
If one further eliminates ${\bf x}_1$, we obtain a simplified equation for ${\bf x}_0$: 
\begin{align}
    \nonumber   &\mu^2 ({\bf C}_{10} - {\bf B}_{12} {\bf R} {\bf B}_{20}) {\bf x}_0= (\lambda - \mu {\bf B}_{11} + O(\mu^2)) \times & \\
   & ({\bf A}_{01}+\mu {\bf B}_{01}+O(\mu^2))^{-1} (\lambda - \mu {\bf B}_{00}+O(\mu^2)) {\bf x}_{0}+O(\mu^3),&
   \label{eqn:Perturb}
\end{align}
giving an analytic matrix pencil \cite{Lidskii}.   
Note that boundedness of ${\bf B}_{01}$ implies invertibility of  $({\bf A}_{01} + \mu {\bf B}_{01})$ for $\mu$ sufficiently small. Writing (\ref{eqn:Perturb}) in the form ${\bf N}(\lambda,\mu) {\bf x}_0= 0$ we find the condition 
\begin{multline}
    \det({\bf N}(\lambda,\mu)) = \det\left( (\lambda - \mu {\bf B}_{11}){\bf A}_{01}^{-1}(\lambda
    - \mu {\bf B}_{00})\right.\\ - \left.\mu^2 ({\bf C}_{10} - {\bf B}_{12} {\bf A}_{22}^{-1}
    {\bf B}_{20})\right) (1+O(\lambda,\mu))=0.
\end{multline}
The Weierstrauss preparation theorem then implies that there is a
branch of solutions $\lambda(\mu) = \lambda_1 \mu + O(\mu^2)$ with
$\lambda_1$ a root of 
\[
    \det\left( (\lambda_1 - \mu {\bf B}_{11}){\bf A}_{01}^{-1}(\lambda_1
    -  \mu{\bf B}_{00}) -  \mu^2({\bf C}_{10} - {\bf B}_{12} {\bf A}_{22}^{-1}
    {\bf B}_{20})\right)=0.\]
It only remains to recognize that this is the same as formula
(\ref{eqn:QuadPencil}). To see this note that the Moore-Penrose
inverse is given by
\[
   {\mathcal L}^{-1} =  \left(\arraycolsep=2.0pt\def\arraystretch{1.4}\begin{array}{c|c|c} 
   0 & 0 & 0 \\ \hline 
   {\bf A}_{01}^{-1} & \hskip 4pt 0\hskip 4pt & 0 \\\hline 
   0 & 0 & {\bf A}_{22}^{-1} \end{array}\right). 
\]

From this we see that the $(1,0)$
block of ${\mathcal L}^{-1} {\mathcal L}^{(1)}$
is ${\bf A}_{01}^{-1}{\bf B}_{00}$, the $(1,0)$ block of  
${\mathcal  L}^{(1)} {\mathcal L}^{-1}$ is ${\bf B}_{11}{\bf A}_{01}^{-1}$, and
the $(1,0)$ block of ${\mathcal L}^{(1)} {\mathcal L}^{-1} {\mathcal L}^{(1)}$ is
${\bf B}_{11} {\bf A}_{01}^{-1}{\bf B}_{00} + {\bf B}_{12} {\bf A}_{22}^{-1} {\bf B}_{20}$. 
The fact that ${\bf A}_{01}$ is a Gram matrix rather than the identity reflects 
the fact that our basis is not orthonormal.

\end{proof}

\subsection{Evaluation of Stability to Longitudinal and Transverse Perturbations}\label{sec:matrixcoeff}

We are now ready to derive the equations governing the modulational stability of quasi-periodic traveling waves. We have previously shown that generically the operator $\mathcal{L}$ has a two dimensional kernel and a two dimensional first generalized kernel, and we have also shown how such a Jordan block breaks up under a certain restricted class of perturbations. The last step is to realize that both transverse and longitudinal long-wavelength perturbations can be treated as small perturbations to the operator ${\mathcal L}$,
and to compute the appropriate matrix elements of the perturbation. 

For the case of longitudinal perturbations we first note that the operator ${\mathcal L}$ is a differential operator on ${\mathbb R}$ with $T$-periodic coefficients. It follows from Floquet theory~\cite{Kuchment} that the spectrum of ${\mathcal L}$ on $L_2({\mathbb R})$ is given by a union over all quasi-momenta in the dual torus $\mu\in (-\frac{\pi}{T},\frac{\pi}{T}]$ of the spectrum of ${\mathcal L}$ subject to quasi-periodic boundary conditions ${\bf u}(T) = e^{i \mu T} {\bf u}(0); {\bf u}_y(T) = e^{i \mu T} {\bf u}_y(0)$. This $\mu$-dependent boundary condition can be translated to a $\mu$-dependent operator on $L_2({\mathbb R})$ by the change of variable $ {\bf u} = e^{i \mu y} \tilde {\bf u}$, in terms of which ${\mathcal L} {\bf u} = \lambda {\bf u}$ becomes ${\mathcal L}(\mu) \tilde {\bf u} = \lambda \tilde {\bf u}$, where ${\mathcal L}(\mu) = e^{-i \mu y} {\mathcal L} e^{i \mu y} = {\mathcal L} + \mu {\mathcal L}^{(1)} + \mu^2 {\mathcal L}^{(2)}$, with ${\mathcal L}^{(1)}$ and ${\mathcal L}^{(2)}$ given by  
\begin{align*}
        & {\mathcal L}^{(1)} = 2 i\left(\begin{array}{cc}  S_y & \partial_y \\ - \partial_y &  S_y  \end{array}\right)  & \\
        & {\mathcal L}^{(2)} = \left(\begin{array}{cc}  0 & -1 \\ 1 & 0 \end{array}\right).  & 
    \end{align*}
All that remains is to compute the various matrix elements in the block decomposition of the perturbation terms and apply Proposition \ref{prop:Breakup}. 

\begin{corr}[Normal Form for Longitudinal Perturbations]
    Suppose that ${\mathcal L}$ is the linearization of the nonlinear Schr\"odinger equation about a traveling wave solution, and that the genericity conditions on the kernel of ${\mathcal L}$ are met. 
    For small values of $\mu$, the Floquet
    multiplier, a normal form for the four bands of continuous spectrum emerging from the origin in the spectral plane is as follows:
    \begin{equation}
        \det\left(\lambda^2
        \left(\begin{array}{cc} a_2 & b_2 \\ b_2 & d_2 \end{array} \right) 
        + \lambda \mu \left(\begin{array}{cc} a_1 & b_1 \\ b_1 & d_1 \end{array}\right) 
        + \mu^2 \left(\begin{array}{cc} a_0 & b_0 \\ b_0 & d_0 \end{array}\right)\right)
        + O(5) = 0  \label{eqn:quartic}
    \end{equation}
    where $O(5)$ denotes terms $\lambda^j \mu^k$ with $j,k\geq 0$ and  $j+k\geq 5$.
    The matrix elements are given by (cf. equation \ref{eqn:QuadPencil})

    \begin{subequations}
    \begin{align}
        & a_2 ={\bf v}_1 {\bf u}_1 =-\frac{\sigma}{2}(\gamma M_E +\rho M_\kappa+ \sigma M_\omega)
        &\\
        \nonumber & b_2 = {\bf v}_1 {\bf u}_3 
        =-\frac{\sigma\rho T}{2}
        &\\
        & \ \ \ \ \ = {\bf v}_3{\bf u}_1 = -   \frac{\sigma}{2}(\tau M_E +\nu M_\kappa) 
        &\\
        & d_2 = {\bf v}_3 {\bf u}_3 
        =-\frac{\sigma}{2}\left(\nu T+\sigma M/2\right)\\
        & a_1 = -{\bf v}_0\mathcal L^{(1)}{\bf u}_0 - {\bf v}_1\mathcal L^{(1)}{\bf u}_1 =  i \sigma T(2\rho) & \\
        & b_1  = -{\bf v}_0\mathcal L^{(1)}{\bf u}_2 - {\bf v}_1\mathcal L^{(1)}{\bf u}_3 =  i \sigma T(\nu +\gamma)& \\
        & d_1 =   -{\bf v}_2\mathcal L^{(1)}{\bf u}_2 - {\bf v}_3\mathcal L^{(1)}{\bf u}_3 =  i \sigma T (2 \tau +\sigma\kappa) & \\
        & a_0 = {\bf v}_1\mathcal L^{(1)} \mathcal L^{-1} \mathcal L^{(1)}{\bf u}_0 - {\bf v}_1 \mathcal L^{(2)}{\bf u}_0 = 2 \sigma T \nu& \\
        & b_0 = {\bf v}_1\mathcal L^{(1)} \mathcal L^{-1} \mathcal L^{(1)}{\bf u}_2 - {\bf v}_1 \mathcal L^{(2)}{\bf u}_2  =  2 \sigma T \tau & \\
        & d_0 = {\bf v}_3\mathcal L^{(1)} \mathcal L^{-1} \mathcal L^{(1)}{\bf u}_2 - {\bf v}_3 \mathcal L^{(2)}{\bf u}_2  =  2 \sigma T ( \omega \gamma-\zeta \xi -E\sigma) &
    \end{align}\label{eqn:quarticmatrixelements}
    \end{subequations}
\end{corr}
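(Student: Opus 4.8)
The plan is to derive the corollary as the $d=2$ specialization of Proposition~\ref{prop:Breakup}, applied to the Floquet-conjugated operator $\mathcal{L}(\mu)=\mathcal{L}+\mu\mathcal{L}^{(1)}+\mu^2\mathcal{L}^{(2)}$, and then to evaluate the nine matrix entries explicitly. First I would record that the substitution $\mathbf{u}=e^{i\mu y}\tilde{\mathbf{u}}$ produces exactly the displayed $\mathcal{L}^{(1)}=2i\left(\begin{smallmatrix}S_y&\partial_y\\-\partial_y&S_y\end{smallmatrix}\right)$ and $\mathcal{L}^{(2)}=\left(\begin{smallmatrix}0&-1\\1&0\end{smallmatrix}\right)$, both of which are at most first-order differential operators with $T$-periodic coefficients; hence the boundedness hypotheses of Proposition~\ref{prop:Breakup} on $\mathcal{L}^{(1)}(\lambda-\mathcal{L})^{-1}$ and $\mathcal{L}^{(2)}$ hold by elliptic regularity and compactness of the resolvent. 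Under the genericity conditions, Theorem~\ref{thm:NullSpace} supplies precisely the two length-two Jordan chains $\mathbf{u}_0,\mathbf{u}_1$ and $\mathbf{u}_2,\mathbf{u}_3$, so the proposition applies with $d=2$; the resulting quadratic $2\times2$ pencil \eqref{eqn:QuadPencil}, homogenized in $(\lambda,\mu)$ through $\lambda_1=\lambda/\mu$ and with the Weierstrass remainder absorbed into the $O(5)$ term, is exactly \eqref{eqn:quartic}.

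Before invoking the proposition I must check the solvability condition \eqref{eqn:Solve}, namely $\mathbf{v}_{2j+1}\mathcal{L}^{(1)}\mathbf{u}_{2k}=0$ for $j,k\in\{0,1\}$. Substituting the explicit left null vectors $\mathbf{v}_1=-\sigma(A,0)$, $\mathbf{v}_3=-\sigma(AS_y,-A_y)$ and right null vectors $\mathbf{u}_0=\sigma(0,A)^t$, $\mathbf{u}_2=\sigma(A_y,AS_y)^t$, each pairing reduces to a single integral over one period that vanishes after one integration by parts, using $S_y=\kappa/A^2$ and the periodicity of $A$. This not only licenses the application of Proposition~\ref{prop:Breakup} but also certifies, via the Fredholm alternative, that $\mathcal{L}^{(1)}\mathbf{u}_0$ and $\mathcal{L}^{(1)}\mathbf{u}_2$ are orthogonal to the left kernel and therefore lie in $\ran(\mathcal{L})$, which is what makes the pseudo-inverse terms in $\mathbf{M}^{(0)}$ well defined.

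The entries then split into three families of increasing difficulty. The $\mathbf{M}^{(2)}$ entries $a_2,b_2,d_2=\mathbf{v}_{2j+1}\mathbf{u}_{2k+1}$ are pure inner products of (generalized) kernel vectors: writing them out in terms of $A$ and the derivatives $A_E,A_\kappa,A_\omega,S_E,S_\kappa,S_\omega$ and collapsing the integrals with the Picard--Fuchs-type identities $(\gamma\partial_E+\rho\partial_\kappa+\sigma\partial_\omega)T=0$, $(\tau\partial_E+\nu\partial_\kappa)\eta+\sigma T/2=0$, together with $T=K_E$, $M=-2K_\omega$, $\eta=-K_\kappa$, turns each into the stated expression. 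The $\mathbf{M}^{(1)}$ entries $a_1,b_1,d_1$ are single-integral pairings $\mathbf{v}\,\mathcal{L}^{(1)}\mathbf{u}$; inserting the first-order form of $\mathcal{L}^{(1)}$ and integrating by parts reduces each to $i\sigma T$ times an elementary combination of $\rho,\gamma,\nu,\tau,\sigma\kappa$, producing the displayed values.

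The genuine obstacle is $\mathbf{M}^{(0)}$, whose first term $\mathbf{v}_{2j+1}\mathcal{L}^{(1)}\mathcal{L}^{-1}\mathcal{L}^{(1)}\mathbf{u}_{2k}$ involves the Moore--Penrose inverse. Rather than computing $\mathcal{L}^{-1}$ in general, I would express $\mathcal{L}^{(1)}\mathbf{u}_0$ and $\mathcal{L}^{(1)}\mathbf{u}_2$ as explicit linear combinations of the three range elements whose preimages are already known, namely $\mathbf{u}_0=\mathcal{L}\mathbf{u}_1$, $\mathbf{u}_2=\mathcal{L}\mathbf{u}_3$ and $\mathbf{u}_4=\mathcal{L}\mathbf{u}_5$, modulo terms annihilated by $\mathbf{v}_1,\mathbf{v}_3$. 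This is precisely the role of the auxiliary pair $\mathbf{u}_4,\mathbf{u}_5$ and the artificial parameter $\zeta$: acting by the first-order operator on a kernel vector generates the nonlinearity $f(A^2)A$, whose preimage is encoded in $\mathbf{u}_5$. Carrying out this reduction is where the traveling-wave ODEs \eqref{eqn:TravWave1}--\eqref{eqn:TravWave4} must be used systematically to rewrite the $S_yA_y$, $AS_{yy}$ and $\partial_{yy}$ contributions; once $\mathcal{L}^{-1}\mathcal{L}^{(1)}\mathbf{u}_{2k}$ is written as the corresponding combination of $\mathbf{u}_1,\mathbf{u}_3,\mathbf{u}_5$, the outer pairing with $\mathbf{v}_{2j+1}\mathcal{L}^{(1)}$ collapses to computable period integrals, and subtracting the elementary term $\mathbf{v}_{2j+1}\mathcal{L}^{(2)}\mathbf{u}_{2k}$ (here $\mathcal{L}^{(2)}$ is a constant rotation) yields $a_0=2\sigma T\nu$, $b_0=2\sigma T\tau$ and $d_0=2\sigma T(\omega\gamma-\zeta\xi-E\sigma)$. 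I would close by verifying that $\mathbf{M}^{(1)}$ and $\mathbf{M}^{(0)}$ are symmetric, so that the $(0,1)$ and $(1,0)$ entries coincide; this follows from the Hamiltonian pairing $\mathbf{v}=\pm\mathbf{u}^t\mathcal{J}$ together with the self-adjointness of $\mathcal{L}_\pm$ and the skew-symmetry of $\mathcal{K}$, ensuring that the pencil takes the symmetric form written in \eqref{eqn:quartic}.
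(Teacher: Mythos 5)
Your proposal follows essentially the same route as the paper: the corollary is obtained by applying Proposition~\ref{prop:Breakup} with $d=2$ to the Floquet-conjugated operator ${\mathcal L}(\mu)={\mathcal L}+\mu{\mathcal L}^{(1)}+\mu^2{\mathcal L}^{(2)}$, with the matrix entries evaluated exactly as in Appendix~\ref{ap:matrixelements}, including the key identities ${\mathcal L}^{(1)}{\bf u}_0=2i{\bf u}_2$, ${\bf v}_1{\mathcal L}^{(1)}=2i{\bf v}_3$, and the decomposition ${\mathcal L}^{(1)}{\bf u}_2=2i\omega{\bf u}_0+2i\zeta{\bf u}_4$ that lets the Moore--Penrose terms be resolved through the auxiliary pair ${\bf u}_4,{\bf u}_5$. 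Your explicit verification of the solvability condition \eqref{eqn:Solve} and of the symmetry of the pencil makes small points the paper leaves implicit, but the argument is the same.
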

\begin{proof}
Assuming that the genericity condition holds,  the normal form follows from Proposition (\ref{prop:Breakup}), in particular Equation (\ref{eqn:QuadPencil}). 
All of the elements of the matrices ${\bf M}^{(0)},{\bf M}^{(1)},{\bf M}^{(2)}$ can be computed explicitly.  
In Appendix \ref{ap:matrixelements} we compute a few of these matrix elements; the remainder follow similarly.  Note that all of the matrix elements can be expressed in terms of $\eta,T,M$ and their derivatives (see Equation \eqref{eqn:determinants}).

Note that the matrices ${\bf M}^{(0)}$ and ${\bf M}^{(2)}$ are real and symmetric while ${\bf M}^{(1)}$ is symmetric and purely imaginary. It is perhaps more convenient to work with $i\lambda$ instead: $i\lambda$ satisfies a real quartic equation, and has roots that are either real or are complex conjugate pairs. This corresponds to roots of $\lambda$ either being purely imaginary or symmetric about the imaginary axis $(\lambda,-\lambda^*)$. This represents {\em half} of the usual Hamiltonian quartet $\lambda,-\lambda,\lambda^*,-\lambda^*$. The other half of the Hamiltonian quartet comes from mapping $\mu \mapsto -\mu$, which is equivalent to taking the complex conjugate of the equation.   
\end{proof}
 
The case of transverse perturbations is similar though a bit more straightforward. Here we are considering the stability of periodic solutions of the one-dimensional NLS 
\[
  i w_t = w_{xx} + \zeta f(|w|^2) w
  \]
 as solutions of the two-dimensional NLS 
\[
  i w_t = w_{xx} \pm w_{zz} + \zeta f(|w|^2) w. 
  \]
  The $+$ case is often referred to as the elliptic 2D NLS, while the $-$ case is referred to as the hyperbolic 2D NLS.   
  The linearization of the $2D$ NLS around a one-dimensional solution with perturbation of the form
  \begin{equation}
      w(y,z,t) = e^{i\omega t}\left(\phi(y)+\epsilon e^{i[S(y+y_0)+cy/2+\theta_0]}W(y,z,t)\right)
  \end{equation}
  leads to a linear operator of the form ${\mathcal L} \mp {\mathcal J} \frac{\partial^2}{\partial z^2}$, where ${\mathcal L}$ is the linearized operator from the one-dimensional case.  Since ${\mathcal L}$ is independent of the transverse variable $z$ one can take the Fourier transform in the $z$, with dual variable $k$, to find the operator 
\[
{\mathcal L}(k) = {\mathcal L} \pm k^2 {\mathcal L}^{(2)}.
\]
Thus for transverse modulations there is no ${\mathcal L}^{(1)}$ term and the ${\mathcal L}^{(2)}$ takes a similar form to that for longitudinal modulations, with the small parameter being the transverse wave-number $k$ rather than the Floquet exponent $\mu$. 
\begin{corr} (Normal Form for Transverse Pertubations) 
  Suppose that ${\mathcal L}(k)$ is the Fourier transform in $z$ of the linearization of the 2D NLS equation about a one-dimensional periodic traveling wave solution that satisfies the genericty conditions. 
  
  For small $k$ a normal form for the four bands of continuous spectrum emerging from the origin in the spectral plane is  
  \begin{equation}
  \det\left(\lambda^2 \left(\begin{array}{cc} a_2 & b_2 \\ b_2 & d_2 \end{array}\right)  \pm k^2 \left(\begin{array}{cc} a_0^\prime & b_0^\prime \\ b_0^\prime & d_0^\prime  \end{array}\right)\right) + O(5) =0
  \label{eqn:transquartic}
  \end{equation}
 where  the coefficients $a_2,b_2,d_2$ are as in the previous corollary and $a_0^\prime,b_0^\prime,d_0^\prime$ are given by 
 
 \begin{align*}
     & a_0^\prime = - {\bf v}_1\mathcal L^{(2)}{\bf u}_0 = -  \sigma^2 M & \\
     & b_0^\prime = - {\bf v}_1\mathcal L^{(2)}{\bf u}_2 = -  \sigma^2 \kappa T & \\
     & d_0^\prime = - {\bf v}_3\mathcal L^{(2)}{\bf u}_2 = -  \sigma^2 (2 ET-\w M -\zeta U) & \\
 \end{align*}
 with
 \[U = \int_0^T F(A^2)dy.\]
 
 Note that (again for small wavenumber $k$) the eigenvalues of the linearized hyperbolic 2D NLS operator under transverse perturbation are $i$ times the eigenvalues of the linearized elliptic 2D NLS. 
 
\end{corr}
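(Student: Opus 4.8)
The plan is to recognize the transverse problem as a particularly clean specialization of the abstract breakup result, Proposition~\ref{prop:Breakup}, in which the role of the small parameter is played by the transverse wavenumber $k$ and, decisively, the first-order perturbation vanishes. From the discussion preceding the statement, Fourier transforming in $z$ yields $\cL(k)=\cL\pm k^2\cLt$, which matches the template $\cL+\mu\cLo+\mu^2\cLt$ upon setting $\mu=k$, $\cLo=0$, and replacing $\cLt$ by $\pm\cLt$. Since $\cLo=0$, the solvability condition~\eqref{eqn:Solve} holds trivially and the hypotheses of Proposition~\ref{prop:Breakup} are satisfied.

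First I would record the simplifications forced by $\cLo=0$. The Gram matrix ${\bf M}^{(2)}_{j,k}={\bf v}_{2j+1}{\bf u}_{2k+1}$ does not see the perturbation at all, so it coincides with the one appearing in the longitudinal normal form (the preceding Corollary), which supplies the coefficients $a_2,b_2,d_2$. The linear term ${\bf M}^{(1)}$ vanishes identically, and the constant term collapses to ${\bf M}^{(0)}_{j,k}=\mp\,{\bf v}_{2j+1}\cLt{\bf u}_{2k}$. Substituting into the pencil~\eqref{eqn:QuadPencil} and rescaling $\lambda=\lambda_1 k$ (equivalently clearing the factor $k^2$ inside the $2\times2$ determinant) reproduces exactly the stated normal form~\eqref{eqn:transquartic}, the degree-$\ge5$ remainder being inherited from the $o(\mu)$ error in the proposition.

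Next I would evaluate the three independent entries of ${\bf M}^{(0)}$ directly, using $\cLt={\mathcal J}$ together with the explicit vectors ${\bf u}_0=\sigma(0,A)^t$, ${\bf u}_2=\sigma(A_y,AS_y)^t$, ${\bf v}_1=-\sigma(A,0)$ and ${\bf v}_3=-\sigma(AS_y,-A_y)$. Applying ${\mathcal J}$ rotates each right vector and the pairings reduce to period integrals: $a_0^\prime=-\sigma^2\int_0^T A^2\,dy=-\sigma^2 M$, and $b_0^\prime=-\sigma^2\int_0^T A^2 S_y\,dy=-\sigma^2\kappa T$ after using $S_y=\kappa/A^2$. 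The one evaluation needing care is $d_0^\prime$, whose integrand is $A^2S_y^2+A_y^2$; substituting $S_y=\kappa/A^2$ and the quadrature~\eqref{eqn:TravWave4} (with $c=0$) produces the cancellation $A^2S_y^2+A_y^2=2E-\omega A^2-\zeta F(A^2)$, giving $d_0^\prime=-\sigma^2(2ET-\omega M-\zeta U)$ with $U=\int_0^T F(A^2)\,dy$.

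Finally I would settle the hyperbolic-versus-elliptic comparison. The two cases differ only in the sign of the $k^2$-block in~\eqref{eqn:transquartic}. Since ${\bf M}^{(2)}$ and ${\bf M}^{(0)}$ are $2\times2$, one has $\det(-X)=\det(X)$, so the substitution $\lambda\mapsto i\lambda$ turns $\det(\lambda^2{\bf M}^{(2)}+k^2{\bf M}^{(0)})$ into $\det(\lambda^2{\bf M}^{(2)}-k^2{\bf M}^{(0)})$ up to an overall sign; hence to leading order the hyperbolic roots are exactly $i$ times the elliptic ones. I expect the only genuinely error-prone point to be the bookkeeping of the $\pm$ sign as it passes through ${\bf M}^{(0)}$ and the rescaling---everything else is either a direct specialization of Proposition~\ref{prop:Breakup} or a short reduction to period integrals via the traveling-wave identities.
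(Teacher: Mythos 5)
Your proposal is correct and follows essentially the same route as the paper: both specialize Proposition~\ref{prop:Breakup} with $\mathcal{L}^{(1)}=0$ and small parameter $k$, so that ${\bf M}^{(1)}$ vanishes, ${\bf M}^{(2)}$ is the same Gram matrix as in the longitudinal case, and ${\bf M}^{(0)}$ reduces to $\mp{\bf v}_{2j+1}\mathcal{L}^{(2)}{\bf u}_{2k}$, whose entries you evaluate exactly as the paper does in Appendix~\ref{ap:matrixelements} (including the cancellation $A^2S_y^2+A_y^2=2E-\omega A^2-\zeta F(A^2)$ via the quadrature). Your determinant-parity argument for the elliptic-to-hyperbolic map $\lambda\mapsto i\lambda$ is a correct justification of a claim the paper states without proof.
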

\begin{proof}
As in the previous result the proof amounts to computing the appropriate matrix elements. Note that the ${\bf M}^{(0)}$ terms are slightly different in the two cases, as the longitudinal case includes terms arising from ${\mathcal L}^{(2)}$ as well as terms arising from ${\mathcal L}^{(1)} {\mathcal L}^{-1} {\mathcal L}^{(1)},$ the latter of which do not arise in the transverse case.
\end{proof}

 As noted previously $i w_t = w_{xx} +  w_{zz} + \zeta f(|w|^2) w$ is called the elliptic NLS and $i w_t = w_{xx} - w_{zz} + \zeta f(|w|^2) w$ the hyperbolic NLS. In the numerics section we will refer to these as focusing or defocusing based on the nature of the 1D problem, $i w_t = w_{xx} + \zeta f(|w|^2) w$, though in the hyperbolic case the distinction between focusing and defocusing is not particularly meaningful.  

These results give a rigorous normal form for the eigenvalues of the linearized operator ${\mathcal L}$ in a neighborhood of the origin in the spectral plane as a function of the quasimomentum $\mu$ or the transverse wavenumber $k$ respectively. One could in principle combine these, and consider modulations in both the transverse and longitudinal directions, though we have not done so here. 
The eigenvalues are locally given by the roots of a homogeneous quartic polynomial in $\lambda$ and $\mu$ or $k$. The coefficients of this polynomial can be expressed in terms of the quantities $T,M,\eta,U $ and their derivatives with respect to the various parameters. The most interesting case physically is that of a potential $F(|A|^2)$ that is polynomial. In this situation it is a classical result that there are only a finite number of such differentials, and that the derivatives of such quantities can always be expressed as a linear combination of such quantities, a result known as the Picard--Fuchs relations~ \cite{Fuchs}.  In the next section we briefly outline the derivation of the Picard--Fuchs relations, as these are a natural way to compute the derivatives from both the analytical as well as the numerical point of view.

\subsection{Picard--Fuchs relations for hyperelliptic equations.}
The calculation in the previous section gives a rigorous normal form
for the bands of continuous spectrum in a neighborhood of the origin
in the spectral plane in terms of the derivatives with respect to
certain parameters of certain moments of the traveling wave
solution. In the case of a potential $F(w^2)$ that is polynomial in
the field $w$ these conserved
quantities can be expressed in terms of hyperelliptic integrals. Since there are only a finite number of such
differentials the derivatives of such integrals with respect to the
parameters may be expressed in terms of linear combinations of the
integrals themselves. These relations are known as the Picard-Fuchs
equations. 
It is preferable from both an analytic
point and a numerical point of view to use the Picard-Fuchs relations
to ultimately express all derivatives in terms of various moments of
the solutions, so we give a brief description of the necessary theory
here.

The classical Picard-Fuchs equation considers integrals of the form 
\[
J_k =\frac{1}{2}\oint_\gamma \frac{z^k dz}{\sqrt{\alpha_0 + \alpha _1
    z + \alpha_2 z^2 + \ldots \alpha_d z^d}}=\frac12\oint_\gamma
\frac{z^k dz}{\sqrt{P(z,{\bf \alpha})}} \qquad k\geq 0,
\]
where $P(z)$ is a real polynomial with degree $d$ with the property that
there is an interval $[a,b]$ on the real axis where $P> 0$ on
$(a,b)$ and $P$ has simple zeroes at $a$ and $b$, and $\gamma$ is a
fundamental cycle enclosing the real interval $[a,b]$. For the sake of
simplicity it is easiest to assume that {\em all of the roots} are simple, not
just $a,b$, but this can be relaxed somewhat. The following facts \cite{Fuchs,givental,moll} are well-known:    
\begin{enumerate}
    \item[Fact 1] There are at most $(d-1)$ such quantities: for $k>d-2$ every such quantity $J_k$ can be expressed as a linear combination of the $J_j$ with $j \leq (d-2)$, with coefficients rational in $\alpha_j$
    \item[Fact 2] The derivatives $\frac{\partial J_j}{\partial \alpha_k}$ can be expressed as a linear combination of the $J_j ~~~ j \in \{0\ldots d-2\}.$ 
    \item[Fact 3] The quantity $\frac{\partial J_j}{\partial \alpha_k}$ is a function of only $j+k$, and thus $\frac{\partial J_j}{\partial \alpha_k}=\frac{\partial J_k}{\partial \alpha_j}$.
\end{enumerate}
The second fact is what is usually known as the Picard Fuchs equations. To
see the first we note that, from integration by parts,  we have the identity
\[
\oint_\gamma \frac{P'(z,\alpha)}{\sqrt{P(z,\alpha)}} dz = 0,
\]
which gives 
\[
\sum_{j=1}^{d} j \alpha_{j} J_{j-1} = 0,
\]
which gives $J_{d-1}$ in terms of $J_0,J_1\ldots J_{d-2}$. More
generally we have that (again from integration by parts) 
\[
\oint_\gamma \frac{z^k P'(z,\alpha)}{\sqrt{P(z,\alpha)}} dz = -\oint_\gamma 2 k
z^{k-1} \sqrt{P(z,\alpha)} dz = -\oint_\gamma \frac{2 k z^{k-1} P(z,\alpha)}{\sqrt{P(z,\alpha)}} dz
\]
or, equivalently, that 
\[
\oint_\gamma \frac{Q(z)}{\sqrt{P(z,\alpha)}}dz =0
\]
where $Q(z) = z^k P'(z,\alpha) + 2 k z^{k-1} P(z,\alpha).$ This gives
a relation 
\[
\sum_{j=1}^d (j+2k) \alpha_j J_{j+k-1} =0
\] (for $k\ge0$) so we can solve for $J_{k+d-1}$ in terms of $J$ with smaller
arguments. We can apply this repeatedly (if tediously) to always
express any such integral in terms of $J_j$ with $j\leq d-2$.

To see the second fact, we first define $D_k$ as follows
\[
D_k = -\frac{1}{4} \oint \frac{z^k}{(P(z,\alpha))^{\frac32}} dz \qquad
k \in 0 \ldots d-2  
\]
and note that $\frac{\partial J_j}{\partial \alpha_k}= D_{j+k}$. Since all $J_k$ are expressible as
linear combinations of $\{J_j\}_{j=0}^{d-2}$, the largest subscript
that we need is $\frac{\partial J_{d-2}}{\partial \alpha_d}=D_{2d-2}.$
The goal is to derive $2d-1$ linear equations relating $\{D_j\}_{j=0}^{2d-1}$ with
$\{J_k\}_{k=0}^{d-2}$, and to solve these linear equations. 

These equations are of two types. The prototypical first type arises as follows. We begin with
\begin{multline}
\alpha_0 D_0 + \alpha_1 D_1 \ldots \alpha_d D_d 
= -\frac14\oint_\gamma \frac{\sum \alpha_i z^i}{(P(z,\alpha))^{\frac32} } dz\\ 
=-\frac14 \oint \frac{dz}{\sqrt{P(z,\alpha)}} 
= -\frac12 J_0,
\end{multline}
and similarly we have (for $0\leq k\leq d-2$)
\begin{multline}
\sum_{j=0}^d \alpha_j D_{k+j}  = -\frac14 \oint_\gamma \frac{\sum \alpha_j z^{j+k}}{(P(z,\alpha))^{\frac32} } dz \\
=-\frac14 \oint \frac{z^k dz}{\sqrt{P(z,\alpha)}} = -\frac12 J_k \qquad k\in 0\ldots d-2
\end{multline}
The prototype for the second class of equations is the identity
\[
-\frac{1}{4} \oint_\gamma \frac{z^k P'(z,\alpha)}{(P(z,\alpha))^{\frac32}}dz
=-\frac{1}{4} \oint_\gamma  \frac{2 k z^{k-1}}{\sqrt{P(z,a)}} dz 
\]
which follows from integration by parts. At the levels of the
integrals $D_j,J_k$ it can be expressed as 
\begin{equation}
\sum j \alpha_j D_{j+k}  = -\frac12 2 k J_{k-1} \qquad k \in 0\ldots d-1.
\end{equation}
There are $d-1$ equations of the first type and $d$ equations of the
second. This gives a set of $2d-1$ equations in $2d-1$ unknowns $D_0 \ldots D_{2d-2}$, which can be solved to give $\{D_j\}_{j=1}^{2d-1}$ in terms of $\{J_k\}_{k=1}^{2d-1}$. 
The $(2d-1)\times (2d-1)$ matrix mapping $\{D_j\}_{j=1}^{2d-1}$ to $\{J_k\}_{k=1}^{2d-1}$ is the 
Sylvester matrix of the polynomials $P(z,\alpha)$ and
$P'(z,\alpha)$. The Sylvester matrix is singular if and
only if the two polynomials have a common root, which in this case is equivalent to $P$ having a root of higher multiplicity. As we have assumed that the roots of $P(z,\alpha)$ are
simple the Sylvester matrix is invertible, and the derivatives $D_j$
can be expressed as linear combinations of the moments $J_k$. 

In order to have a periodic traveling wave solution it suffices to
have a real interval $z\in (a,b)$ on which $P>0$ with simple roots at
$a,b$.
In the case where $P$ has a double root at some $c\neq a,b $ there is a smaller degree linear system that one can solve. This occurs in the case of the quintic NLS, where for certain parameter values the elliptic function solutions reduce to rational trigonometric or hyperbolic trigonometric solutions. Modifications for this case are reasonably straightforward,
but we do not detail them here. 

We need a very slight generalization of the classical Picard-Fuchs
equations, as we will actually need to consider integrals of the form
$J_{-1}=\frac12 \oint_\gamma \frac{dz}{z\sqrt{P(z,\alpha)}}$, where
  $\gamma$ is a curve that does not enclose the origin. This
  necessitates including an extra derivative, $D_{-1} = -\frac14
  \oint_\gamma \frac{dz}{z(P(z,\alpha))^{\frac32}}.$   We need to
  include an additional equation of the first type, leading to a
  system of $2d$ equations in $2d$ unknowns of the form ${\bf S}{\bf d} = {\bf j}$ where 
\begin{align}
&{\bf S} = \left(\begin{array}{cccccccc}
\alpha_0 & \alpha_1 & \alpha_2 & \ldots & \alpha_d & 0 & \ldots & 0 \\
0& \alpha_0 & \alpha_1 & \alpha_2 & \ldots & \alpha_d & \ddots & \vdots  \\
\vdots & \ddots & \ddots & \ddots & \ddots & \ddots & \ddots & 0 \\
0& \ldots & 0 & \alpha_0 & \alpha_1 & \alpha_2 & \ldots & \alpha_d \\
\\
0 & \alpha_1 & 2\alpha_2 &  \ldots & d \alpha_d & 0 & \ldots & 0\\
0 & 0 & \alpha_1 & 2\alpha_2 & \ldots & d \alpha_d & \ddots & \vdots \\
\vdots & \ddots & \ddots & \ddots & \ddots & \ddots & \ddots & 0 \\
0& \ldots & 0 & 0 & \alpha_1 & 2\alpha_2 &  \ldots & d\alpha_d \\
\end{array}\right)\label{eqn:Syl} \\
&{\bf d}= \left(\begin{array}{c}
D_{-1} \\ D_0 \\ D_1 \\ D_2 \\ \vdots \\ D_d \\ D_{d+1} \\ \vdots \\ D_{2d-3} \\ D_{2d-2} 
\end{array}\right), \hskip 45pt  
{\bf j}=\left(\begin{array}{c}
J_{-1} \\ J_0 \\ J_1 \\ \vdots \\ J_{d-2} \\ 0 \\ 2 J_0 \\ 4 J_1 \\ \vdots \\ 2 (d-1) J_{d-2} 
\end{array}\right) 
\end{align}
The lower-right $2d-1\times 2d-1$ block of ${\bf S}$ is the usual Sylvester matrix, so (assuming $\alpha_0\neq 0$)  this determinant will also vanish if and only if $P(z,\alpha)$ has a root of higher
multiplicity.

In the case of the nonlinear Schr\"odinger equation after quadrature the traveling waves reduce to  
\[
A_y^2 = \frac{-\kappa^2}{A^2} + 2E  -\w A^2  -\zeta F(A^2). 
\]  

The period $T$ of such a solution is given by 
\[
T = \oint\frac{dA}{\sqrt{\frac{-\kappa^2}{A^2} + 2E -\w A^2 - \zeta F(A^2) }},
\]
the mass $M$ by  
\[
M = \oint\frac{A^2 dA}{\sqrt{\frac{-\kappa^2}{A^2} + 2E -\w A^2 - \zeta F(A^2) }},
\]
and the quasi-momentum is $\eta = \kappa \chi$ (for $c=0$),
where $\chi$
is given by an integral of the form
\[
\chi =\oint\frac{dA}{A^2\sqrt{\frac{-\kappa^2}{A^2} + 2E -\w A^2 - \zeta F(A^2) }}
\]
It is convenient to make the change of variables $z=A^2$ to give 
\begin{eqnarray*}
T &=& \frac12 \oint_\gamma \frac{dz}{\sqrt{-\kappa^2 + 2 E z -
   \w z^2 -\zeta  z F(z)}} =J_0\\
M &=& \frac12 \oint_\gamma \frac{z dz}{\sqrt{-\kappa^2 + 2 E z -
   \w z^2 - \zeta z F(z)}} =J_1\\
\chi &=&  \frac12 \oint_\gamma \frac{ dz}{z\sqrt{-\kappa^2 + 2 E z -
   \w z^2 - \zeta z F(z)}} =J_{-1},
\end{eqnarray*}
and so if $F(z)$ is polynomial (and can be assumed to have no constant or linear term) the quantities $T,M,\chi$ (and $U = \int_0^T F(A^2) dy$ in the case of transverse perturbations) are governed by the Picard--Fuchs relations under the identification  
$\alpha_0=-\kappa ^2,\ \alpha_1=2E,\ \alpha_2=-\w,$ and  $\alpha_j$ determined by the coefficients of $F(z)$ for $j>2$.  For the cubic NLS, for instance, we have $\alpha_3=-\zeta/2$ and $\alpha_j=0$ for $j\geq 4$, while for the quintic NLS we have $\alpha_3=0,\alpha_4=-\zeta/3$ and all higher  $\alpha$ zero. 

Assuming a polynomial potential  $F(z)$ the Picard--Fuchs relations hold, and all derivatives of moments of the traveling wave profile can be expressed in terms of moments $J_kj$ of the traveling
wave solution. This is perhaps most satisfactory in the cubic case, $F(z) =
z^2/2$, where derivatives of $\eta,T,M$ can be expressed as linear
combinations of $\eta,T,M$ only, but for any polynomial potential $F(z)$ the derivatives of $\eta,T,M$ can be expressed as linear combinations of a finite number of moments of the traveling wave profile.  Since all of the inner products, genericity conditions, etc are expressible in terms of $\eta,T,M$ and their derivatives this means that ultimately all of these quantities are expressible in terms of a finite collection of moments of the traveling wave profile.  This means that quantities arising in the computation of the normal form such as $\sigma = T_E \eta_\kappa - T_\kappa \eta_E $ can be expressed as a homogeneous quadratic polynomial in a finite number of moments of the traveling wave profile. Similarly all three by three determinants can be expressed as a cubic polynomial in the moments of the traveling wave profile, etc.  In addition to being useful analytically it is preferable from the point of view of numerics to express the quantities of interest directly in terms of moments of the solution, as it avoids errors introduced by numerical differentiation of the moment integrals. We will not give explicit formulae for all of the quantities involved in the stability calculation, as many of them are quite cumbersome, but it is worth mentioning the quantity $\sigma = T_E \eta_\kappa - T_\kappa \eta_E$, as the vanishing of $\sigma$ indicates that the null-space of ${\mathcal L}$ has dimension greater than two. In the cubic case one can use the Picard-Fuchs relations to compute $\sigma$ as 
\begin{align*}
\sigma = -\frac{\kappa^2 \zeta^2}{16 \det({\bf S})} \left( 3 \kappa^2 T^2- 4 E M T + \omega M^2 \right).    
\end{align*}
where ${\bf S}$ is the $2d \times 2d$ Sylvester matrix from Equation (\ref{eqn:Syl}) with $d=3$ and $(\alpha_0,\alpha_1,\alpha_2,\alpha_3) = (-\kappa^2,2E,-\omega,-\zeta/2)$.  
Similarly for the quintic case the quantity $\sigma$ can be expressed as 
\[
\sigma = -\frac{\kappa^2\zeta^2}{16 \det({\bf S}) }\left(4 \kappa^4 T^2 + (4 \kappa^2 \omega - 9 E^2) T J_2 + \omega^2 J_2^2\right)
\]
where $J_2$ is the second moment of the square amplitude, $J_2 = \int_0^T |A(y)|^4 dy = \frac{1}{2} \oint \frac{z^2 dz }{\sqrt{\kappa^2 + 2 E z  - \w z^2 -\zeta z^4/3}}$, and ${\bf S}$ is the $2d \times 2d$ Sylvester matrix from Equation (\ref{eqn:Syl}) with $d=4$ and $(\alpha_0,\alpha_1,\alpha_2,\alpha_3,\alpha_4) = (-\kappa^2,2E,-\omega,0,-\zeta/3)$. 

We also remark that for polynomial nonlinearities the quantity $U=\int_0^TF(A^2)dy$, which arises in the case of transverse perturbations, can be expressed in terms of lower moments. In the cubic case this gives the relation $3 \zeta U + \omega M - E T=0$ and in the quintic case the relation  $6 \zeta U + \omega M - E T = 0$.

\section{Numerical Results}

In this section we  present some numerical
simulations to support our results, and additionally find some trigonometric solutions to the quintic NLS that do not seem to have appeared in the literature.

There are two main types of numerical simulations in this section. The first computes the roots of the normal form polynomials given in Equations (\ref{eqn:quartic}) and (\ref{eqn:transquartic}), along with associated quantities such as the genericity conditions.  First the Picard--Fuchs relations are used to express all derivatives of moments in terms of the moments themselves. The moments can be expressed in the form 
\[
J_j = \int_{r_1}^{r_2} \frac{z^j dz}{\sqrt{\sum_{k=0}^d \alpha_k z^k}} \qquad j \in -1,0,\ldots d-2,
\]
where $r_1,r_2$ are two simple real positive roots of the denominator. Note that moments with $j \geq d-1$ can always be expressed in terms of lower moments.  These integrals have an integrable  singularity at the roots $z=r_{1,2}$, behaving like $(z-r_j)^{-\frac12}$, so it is more convenient and numerically more well-behaved to make the change of variables $ z = \frac{r_1+r_2}{2}+ \frac{r_1-r_2}{2} \sin \Phi$, which leads to a regular integral of the form 
\[
J_j = \int_{-\frac{\pi}{2}}^{\frac{\pi}{2}} \frac{(\frac{r_1+r_2}{2}+ \frac{r_1-r_2}{2} \sin \Phi)^j}{\sqrt{Q(\frac{r_1+r_2}{2}+ \frac{r_1-r_2}{2} \sin \Phi)}}d \Phi
\]
where $Q(z)$ is the polynomial $Q(z) = \sum_{k=0}^d \alpha_k z^k/(r_2-z)(z-r_1).$
 
As a check of the modulation theory predictions we also compute the full $L_2({\mathbb R})$ spectrum of ${\mathcal L}$ as follows. We use direct numerical integration of the traveling wave differential equations to find the amplitude $A(y)$ and the phase $S(y)$. We numerically expand $A(y), S_y(y)$ and $S_{yy}(y)$ in $T$-periodic Fourier series. Usually this is done using $N=20$ Fourier modes but for certain parameter values (typically those close to a trivial phase solution) we retain more Fourier modes.  We next compute the Fourier expansions for $A,S_y,S_{yy}$ and use this to find a finite dimensional Galerkin truncation of ${\mathcal L}(\mu).$ We then find the eigenvalues of the finite dimensional approximation to ${\mathcal L}(\mu)$ or ${\mathcal L}(k)$ for a collection of $\mu \in (-\frac{\pi}{T},\frac{\pi}{T}]$ or $k \in (0,\infty)$.  This method gives a numerical approximation to the spectrum for all values of $\mu$ or $k$, rather than just giving the asymptotics for small $\mu$ or $k$, but is much more computationally intensive. 

We present numerical experiments studying the linearized stability of periodic traveling wave solutions to the cubic and quintic NLS equations in  both the focusing and the defocusing cases. We consider the stability to both modulations in the longitudinal direction (in $x$, or $y=x+ct$) as well as stability of $y$ dependent solutions to modulations in a transverse $z$ variable:
\[
i w_t = w_{xx} \pm w_{zz} + \zeta f(|w|^2)w. 
\]
Note that in the case of transverse perturbations it is not necessary to consider the elliptic and the hyperbolic cases---respectively the $+$ and $-$ signs in the above equation---independently.  The modulation theory predicts that the eigenvalues bifurcating from the  origin are approximately linear in $z$-directional Fourier variable $k$ for small $k$. Since the stability problems for the elliptic and hyperbolic cases are formally related by the map $k \mapsto i k$ it follows that to leading order the eigenvalues of the linearization of the 2D hyperbolic NLS are $i$ times the eigenvalues of the 2D elliptic NLS. 

The case of the stability of the cubic NLS to longitudinal perturbations has previously been treated by Deconinck and collaborators using the exact integrability of cubic NLS. This provides a nice touchstone, and the results here are in agreement with the results of the integrable theory. All remaining cases: the stability of the periodic solutions to the cubic NLS as well as all results for the quintic NLS are to our knowledge new.    

Table \ref{tbl:summary} summarizes the results we find in this section, indicating the possible dimensions of the unstable manifold for perturbations in the longitudinal and transverse directions. To be clear what is meant here: the table gives the number of roots of the modulation equations with a non-zero real part.  If this is non-zero that solution is unstable but if it is zero one cannot rigorously conclude stability -- the spectral curve could be tangent to the imaginary axis without being purely imaginary,  or there could be a finite wavelength (non-modulational) instability. We also only indicate possibilities that occur on open sets in the parameter space. In the cubic focusing case there is a curve along which two of the roots have zero real part and a part of the spectrum is tangent to the imaginary axis. 

Note that in the focusing quintic case, as we will describe, there are two ways to parameterize the traveling wave, depending on how many roots of $A_y^2=0$ are real valued.  In general, the unstable manifold of transverse perturbations is usually two dimensional except in a few special cases. As expected, both defocusing cubic and defocusing quintic appear longitudinally stable\footnote{Of course this method can never establish stability, only instability, as there may be eigenvalues with non-zero real part located away from a neighborhood of the origin.}. Finally, both the focusing cubic and focusing quintic appear to always be modulationally unstable to longitudinal perturbations.
\begin{table}
\begin{tabular}{cc|c|c|c|}\cline{3-5}
& & & Transverse & Transverse \\
& & Longitudinal & Elliptic & Hyperbolic \\ \cline{1-5}
\multicolumn{1}{ |c }{\multirow{2}{*}{Cubic} } &
\multicolumn{1}{ |c| }{Focusing} & 4D & 2D & 2D     \\ \cline{2-5}
\multicolumn{1}{ |c  }{}                        &
\multicolumn{1}{ |c| }{Defocusing} & 0D & 2D & 2D    \\ \cline{1-5}
\multicolumn{1}{ |c  }{\multirow{3}{*}{Quintic} } &
\multicolumn{1}{ |c| }{Focusing (4)} & 2D, 4D & 0D, 2D & 2D, 4D \\ \cline{2-5}
\multicolumn{1}{ |c  }{}                        &
\multicolumn{1}{ |c| }{Focusing (2)} & 2D, 4D & 2D & 2D \\ \cline{2-5}
\multicolumn{1}{ |c  }{}                        &
\multicolumn{1}{ |c| }{Defocusing} & 0D & 2D & 2D \\ \cline{1-5}
\end{tabular}
\vspace{2pt}
\caption{This table indicates the possible dimensions of the unstable manifolds in all the cases that we discuss in this work. As noted in the text we are only considering possibilities that occur on open sets of parameters.  }\label{tbl:summary}
\vspace{-5pt}
\end{table}

We begin with the cubic case. To briefly summarize the results of the integrable theory: the periodic traveling waves of the defocusing cubic NLS are spectrally stable, while the periodic traveling waves  of the  focusing cubic NLS are unstable, and the linearized operator generically has four spectral curves emerging from the origin in the spectral plane.

\subsection{Focusing cubic case: stability to longitudinal perturbations. }
In the focusing cubic case the spectral curves have been computed by
Deconinck and Segal \cite{DeconinckSegal}. 
In the derivation, we have four parameters: $E,\ \kappa,\ \omega,\ \zeta$ (having already eliminated dependence on $\theta_0$, $y_0$, and $c$), but we can study all possible situations by looking at a restricted 2D parameter space.  First, one can scale $\zeta$ into the amplitude of the solution.  Since this method puts no restrictions on the specific traveling wave solution, without loss of generality we can choose one value for $\zeta$.  In addition, one of the three remaining parameters can be scaled into space-time.
We have elected to compute the spectrum
for the same parameter values as were depicted in that paper, to
facilitate comparison. (Note, however, that our eigenvalues are
exactly twice those in \cite{DeconinckSegal} due to the different scaling chosen
in that paper.) In these numerical experiments we first calculated the
spectrum of the linearized operator numerically using a spectral
method: the periodic functions $A(y),S_y(y)$ were expanded in Fourier
series, allowing the linearized operator ${\mathcal L}(\mu)$ to be
expressed as a sum of diagonal and Toeplitz type operators. We then
truncated to a finite number of Fourier modes and computed the
spectrum of the resulting matrix for a sequence of different values of
$\mu$. The traveling wave solutions in Deconinck-Segal are
parameterized by the elliptic modulus $k$ and the parameter $b$, so as
to be expressible in terms of standard elliptic functions, the
relation between $(k,b)$ and the parameters considered here being
\begin{align*}
  &\kappa = \sqrt{b(1-b)(b-k^2)} & \\ 
  &E = b(1+k^2)-\frac{3 b^2}{2} - \frac{k^2}{2} & \\
  & \omega = 1 + k^2 - 3 b & \\
  &\zeta = 2.&
\end{align*}

When parameterized in this way, the cubic $P_3(A^2)$ defined by 
\begin{equation}
A^2(A_y)^2=P_3(A^2)=-\kappa^2+2EA^2-\w A^4-\zeta A^6/2\label{eqn:cubicA2}
\end{equation} 
has roots at $A^2=b-1,\ b-k^2$, and $b$, as shown in Figure \ref{fig:cubicpoly}(a).
\begin{figure}
\includegraphics[width=0.32\textwidth]{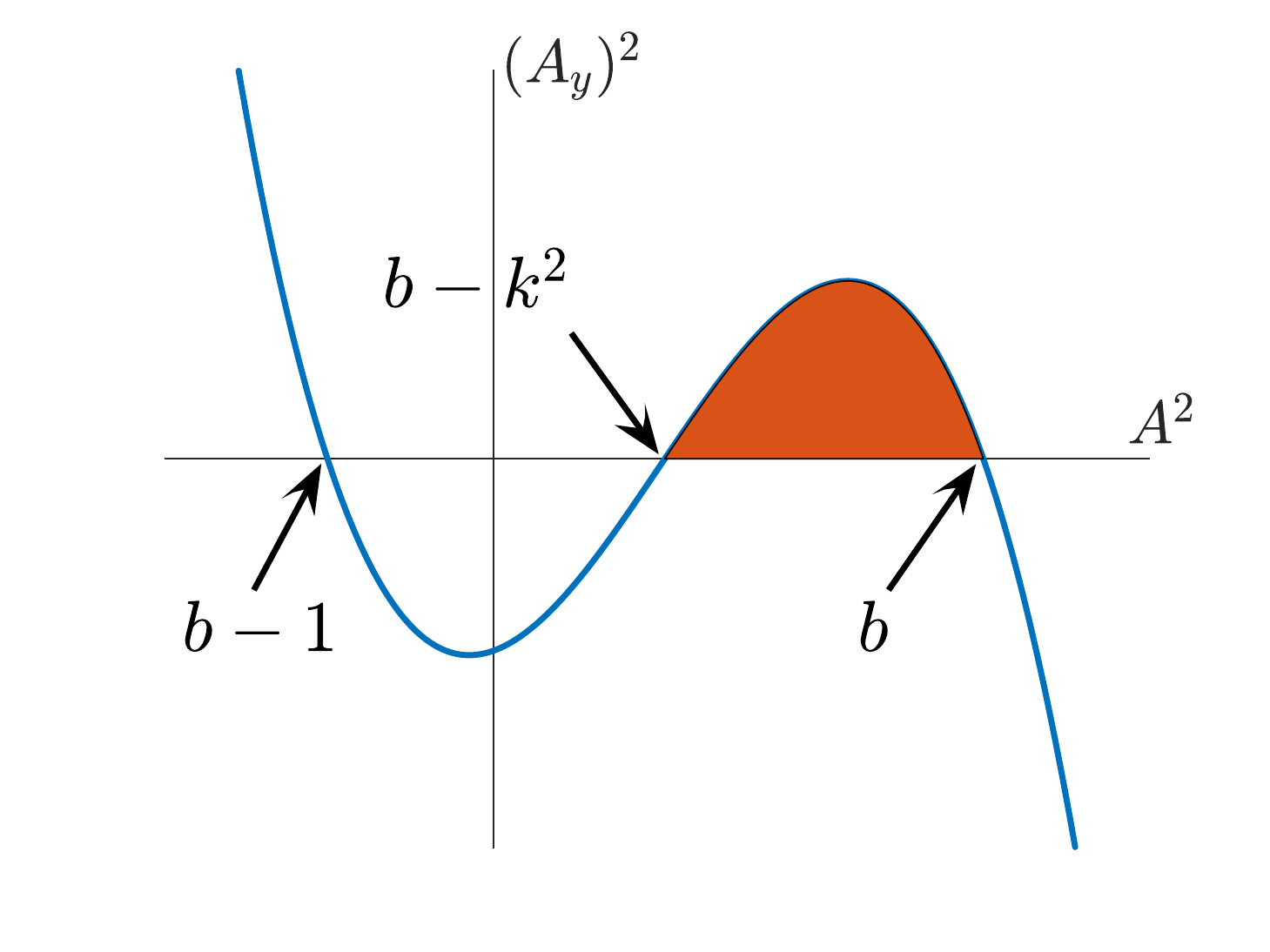}
\includegraphics[width=0.32\textwidth]{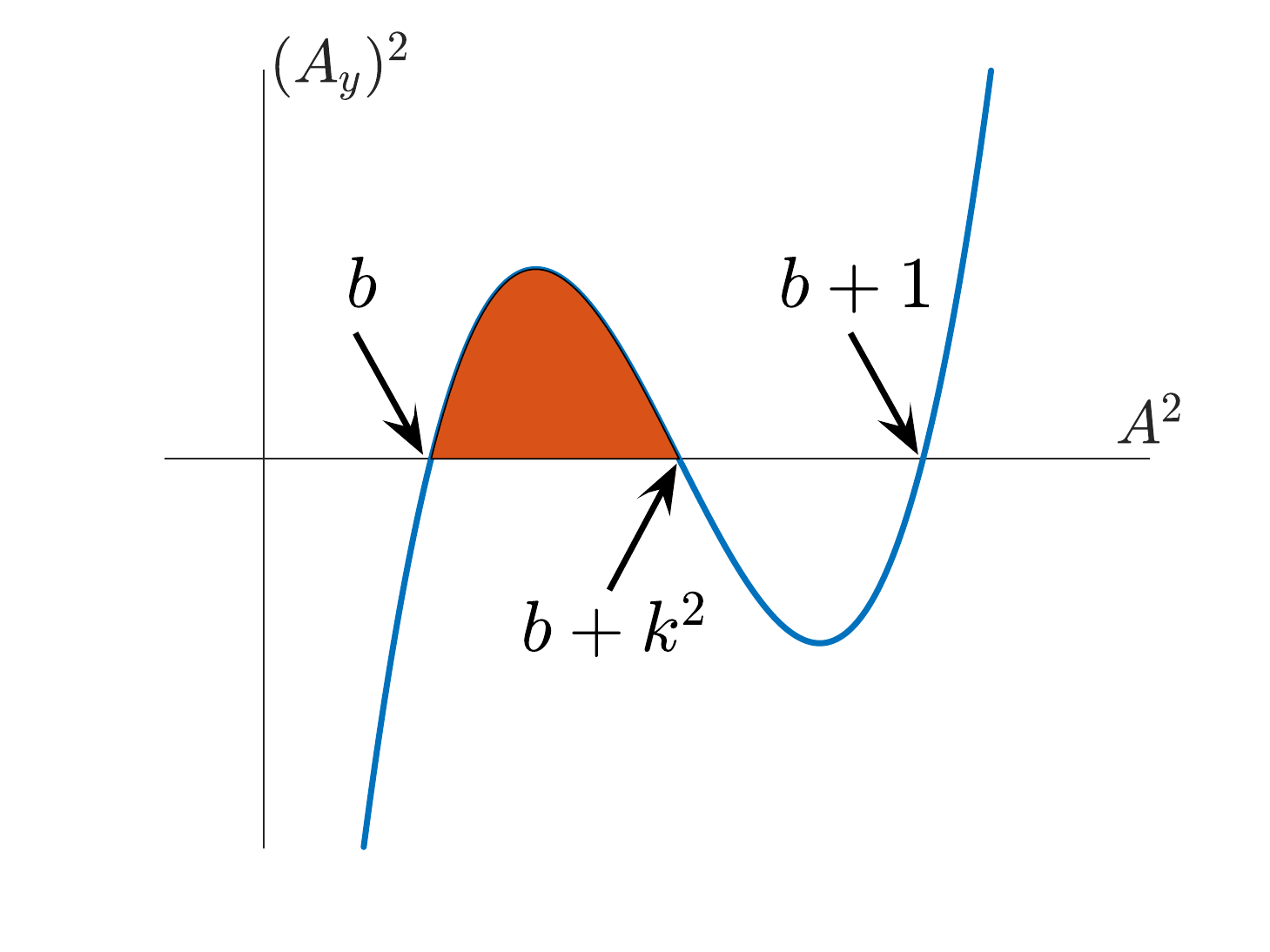}
\includegraphics[width=0.32\textwidth]{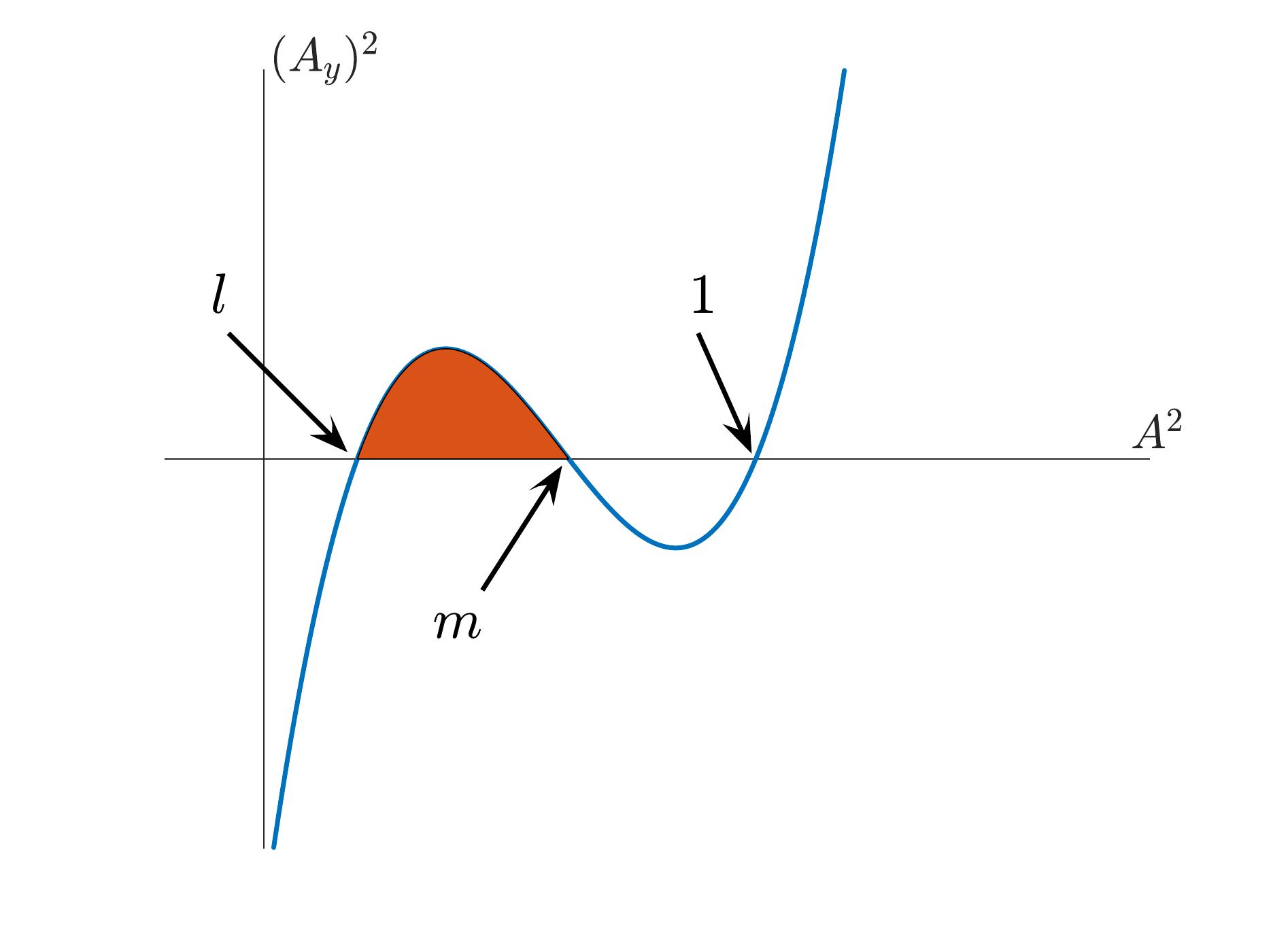}
\caption{We illustrate how we parameterize the cubic NLS by plotting the polynomial $A^2(A_y)^2=P_3(A^2)$ from Eq.~\eqref{eqn:cubicA2}.  On the left is the parameterization for the focusing case.  The middle and right are both for the defocusing case, middle with parameters as used in \cite{BottmanDeconinckNivala}, right with alternative parameters.}\label{fig:cubicpoly}
\end{figure}
It is worth noting here that, from the point of view adopted in this
paper, the trivial-phase solutions, {\em particularly} the $\cn$
type solutions, are a somewhat singular perturbation of the
non-trivial phase solutions. Note that for $\kappa
\neq 0$ the amplitude $A(y)$ is never zero, and the solutions are quasi-periodic with a non-trivial phase.  When $\kappa=0$, on the other hand, the phase is trivial and the amplitude may be zero. The way that these are reconciled is that when $\kappa$ is very small the phase is very flat except in a neighborhood of the region where the amplitude becomes small, where the phase jumps by $\pi$. The result is that solutions close to the trivial phase ones
require some care to evaluate numerically, as the phase has sharp transitions and may require many Fourier modes to adequately resolve.    

The first graph in Figure \ref{fig:DeconinckSegal} represents the same parameter
values as Figure 7a in Deconinck-Segal, $(k,b)=(0.65,0.423)$, the so-called ``two
single-covered figure eights''. Note that this is quite close to a
cnoidal solution ($(k,b)=(.65,.4225)$ is cnoidal, for instance).
This was solved with $N=512$ Fourier modes. The
dots [blue online] represent discretely sampled approximation to the continuous
curves of spectrum, while the stars [red online]
represent the four roots of the quartic Eq.~\eqref{eqn:quartic}, which gives the slope
of spectral curves in a neighborhood of the origin. We see very good
agreement between the numerical simulations and the asymptotic theory:
the two roots in the upper half-plane correspond to the outer figure
eight in the upper half-plane, while two roots in the lower half-plane
lie on the inner figure eight. (of course, replacing $\mu$ with $-\mu$
flips the upper and lower half-planes and completes the quartets---it
is only for the trivial phase solutions that one gets all four members
of a quartet for a single $\mu$ value.

The remaining three pictures were generated with the same parameter
values as in Figure 7(b-d) in \cite{DeconinckSegal}: they are (in order) the non-self-intersecting
butterfly $(k,b)=(0.9,0.95)$, the triple figure eight
$(k,b)=(0.89,0.84)$ and the self-intersecting
butterfly $(k,b)=(0.9,0.85)$. Each of these required only $N= 20$ Fourier
modes. In each case we find spectral pictures in good qualitative agreement with
the results of both Deconinck-Segal and our modulation calculation.

\begin{figure}
  \begin{tabular}{|c|c|} \hline
    \includegraphics[width=0.45\textwidth]{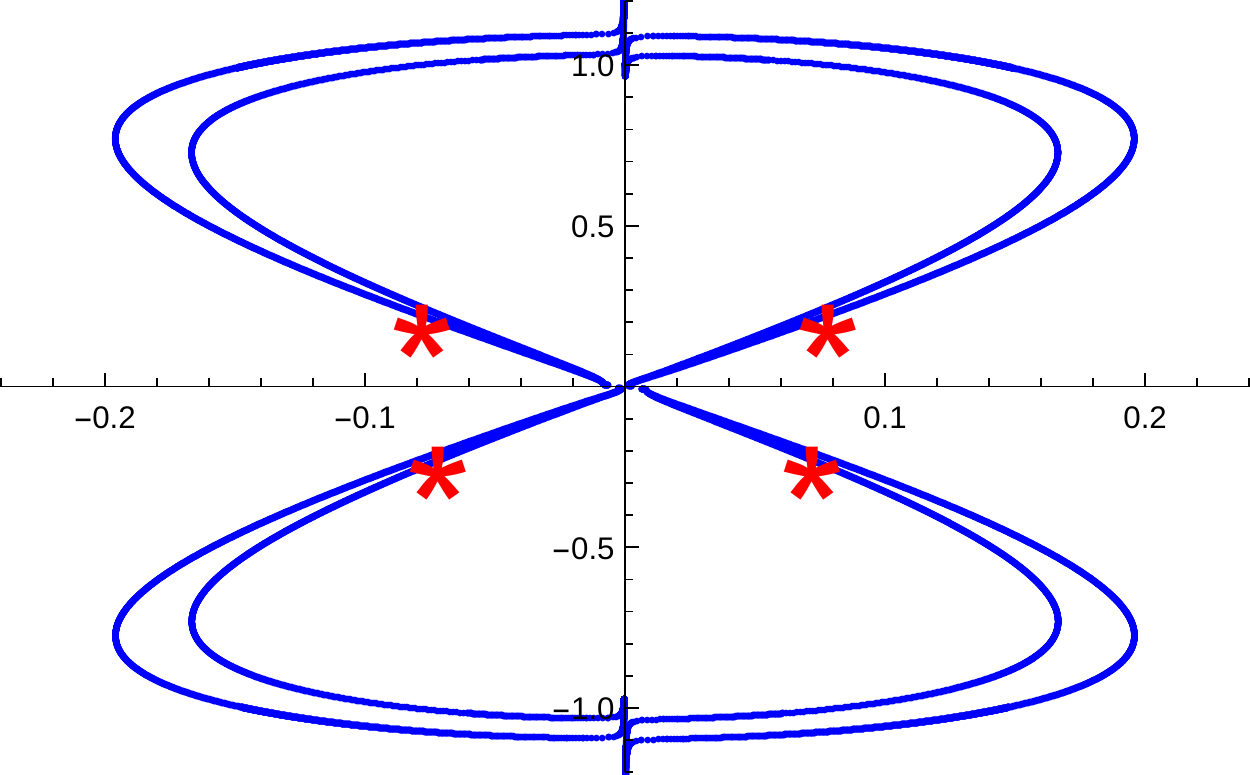} &
    \includegraphics[width=0.45\textwidth]{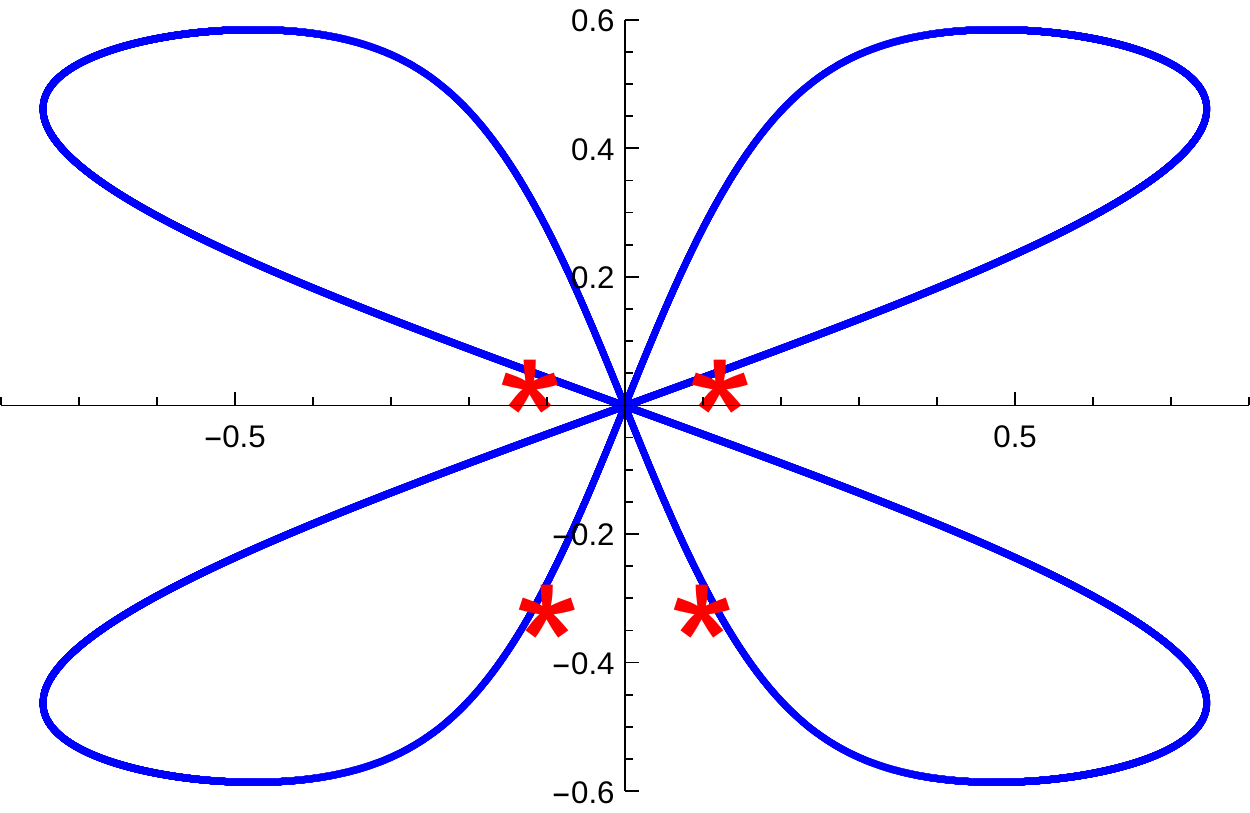}  \\ \hline
    \includegraphics[width=0.45\textwidth]{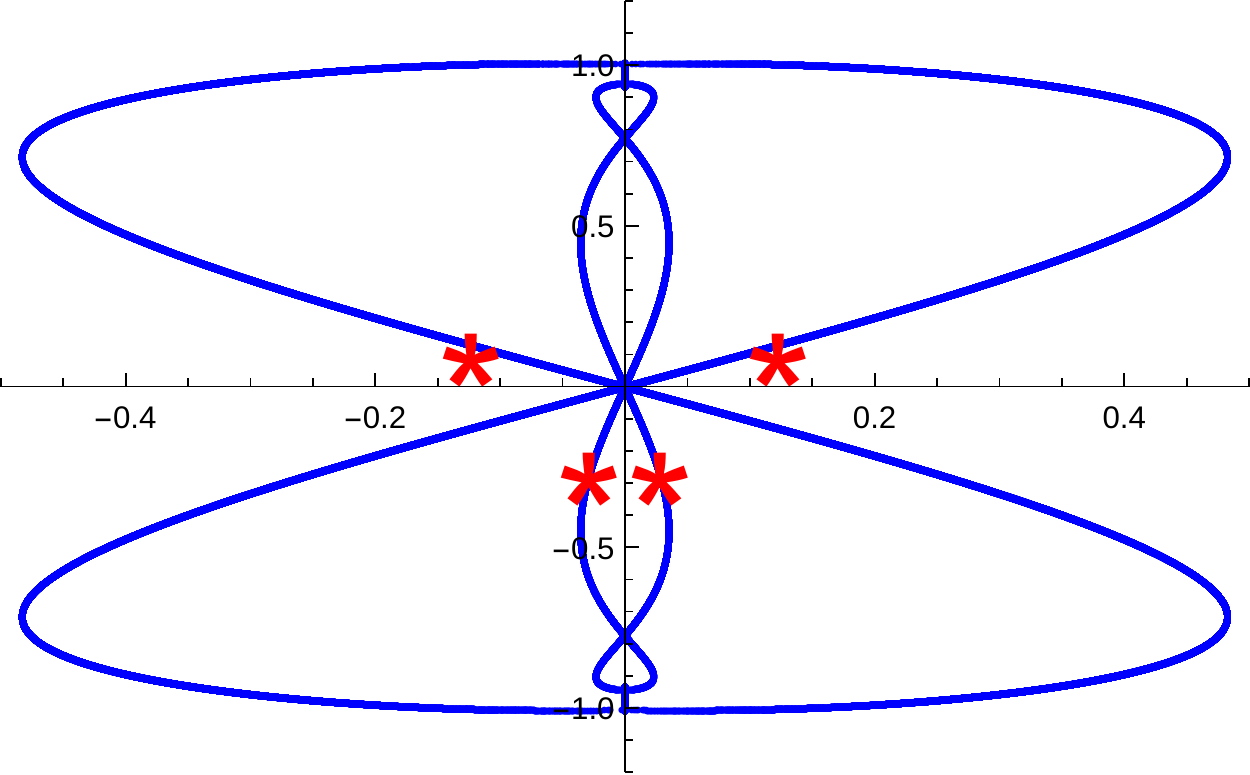} & 
    \includegraphics[width=0.45\textwidth]{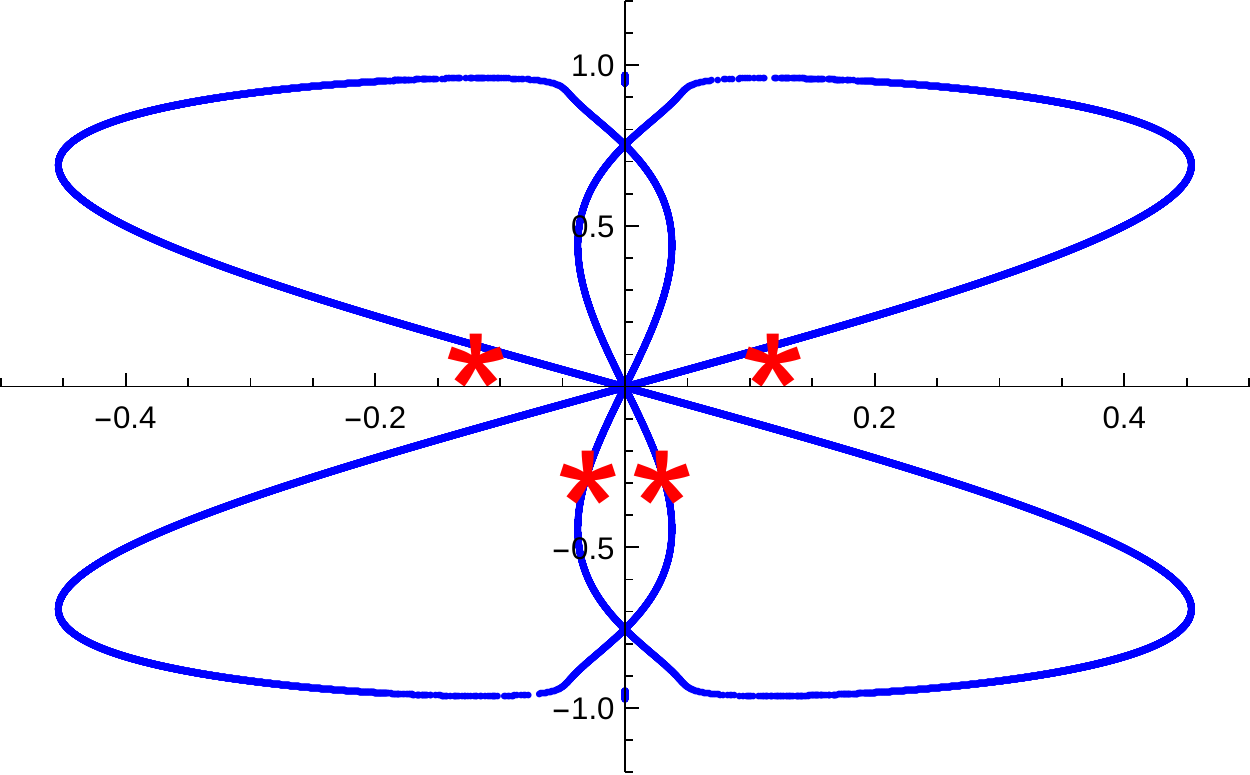}  \\ \hline
  \end{tabular}
\caption[]{The spectrum of the linearized focusing cubic NLS flow around a traveling
  wave solution for $(k,b) = (.65,.423),\ (0.9,0.95),\ (0.89,0.84)$ and
  $(0.9,0.85)$ (left to right, top to bottom) respectively. The dots [blue online] represent the discretely sampled approximation to the continuous curves of the spectrum, while the stars [red online] represent the four roots of the quartic of Eq.~\eqref{eqn:quartic}.}
\label{fig:DeconinckSegal}
\end{figure}

\subsection{Defocusing cubic case: stability to longitudinal perturbations}

The defocusing case, being unconditionally stable, is somewhat uninteresting, so we will not include any specific numerics in this case, but only discuss briefly how we will parameterize it.  
In order to similarly scale the parameter space for the defocusing cubic NLS, we first consider parameters $k$ and $b$ from Bottman, Deconinck, and Nivala \cite{BottmanDeconinckNivala}:  
\begin{align*}
  &\kappa = \sqrt{b(1+b)(b+k^2)} &\\
  &E = b(1+k^2)+\frac{3 b^2}{2} + \frac{k^2}{2} & \\
  & \omega = 1 + k^2 + 3 b & \\
  & \zeta =-2 .& 
\end{align*}
With this paramterization, Eq.~\eqref{eqn:cubicA2} has roots at $A^2=b,\ b+k^2$, and $b+1$, as shown in Figure \ref{fig:cubicpoly}(b).
Note that in the focusing case, in order to have $\kappa^2>0$, we needed $b>k^2$.  However, in the defocusing case, this restriction of the parameter space is no longer necessary.  Thus, to more effectively cover the whole parameter space, we introduce a different parameterization that is perhaps more intuitive but does not use the elliptic modulus.  Instead, we will parameterize in the following way. First observe that the largest positive real root of the cubic represents the maximum amplitude of the periodic traveling wave.  From the scaling invariance we can rescale the maximum amplitude
to be one, so we can assume that the largest positive root of Eq.~\eqref{eqn:cubicA2}  is one, and denote the other two roots by $l$ and $m$, and without loss of generality set $0<l<m<1$, as shown in Figure \ref{fig:cubicpoly}(c).   With this parameterization, the relation between $(l,m)$ and $(E,\kappa,\w)$ is
\begin{align*}
    &\kappa=\sqrt{lm}\\
    &E=\frac{1}{2}(l+m+lm)\\
    &\w=1+l+m\\
    &\zeta=-2.
\end{align*}

\subsection{Cubic NLS: visualizing the entire parameter space}

One benefit of our method is that it is numerically quite cheap---if one uses the Picard-Fuchs relations then one only needs to compute a fixed number of non-singular integrals and do a bit of linear algebra. For this reason one can efficiently explore the entire parameter space, counting the number of roots of \eqref{eqn:quartic} with nonzero real part.  Most of the remainder of this section will be dedicated to figures indicating instability or apparent lack there-of.  Figure \ref{fig:cubiclong} contains plots over parameter space in the $k$-$b$ plane for the focusing cubic NLS (left) and in the $l$-$m$ plane for the defocusing cubic NLS (right).    Note that in the focusing case, since $\kappa^2=b(1-b)(b-k^2)$ must hold, the parameter space is restricted to $k^2<b<1$, while in the defocusing case, to avoid redundancy the parameter space is restricted to $0<l<m<1$.  Each point in the domain is colored according to the number of roots of \eqref{eqn:quartic} with nonzero real part: Black indicates 4 purely imaginary roots, beige indicates two purely imaginary roots and two roots with nonzero real part, and blue indicates all 4 roots have nonzero real part.  In the focusing case we have modified this slightly by doing thresholding: any eigenvalue with real part less that $10^{-4}$ is assumed to have zero real part. We have done this to illustrate an interesting feature of the parameter space: one can see a faint beige curve across the parameter space  along which two of the eigenvalues have small real parts.  This appears to correspond to the upper boundary of Deconinck and Segal's Region V---see Figure 9 and related discussion in that paper. We observe that in the focusing case, everywhere in the parameter space produces at least two eigenvalues with non-zero real part, while in the defocusing case the eigenvalues are purely imaginary in the entire parameter space.  This is in agreement with previous results.

\begin{figure}
\includegraphics[width=0.48\textwidth]{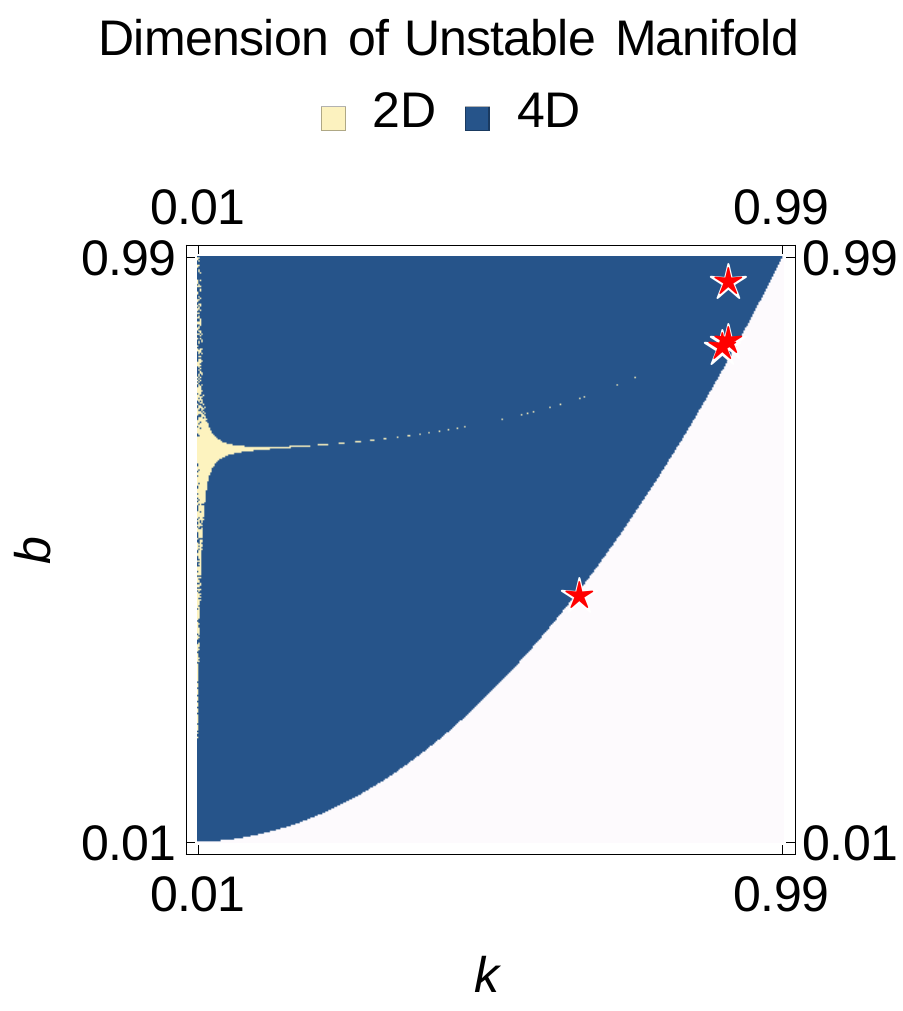}
\includegraphics[width=0.48\textwidth]{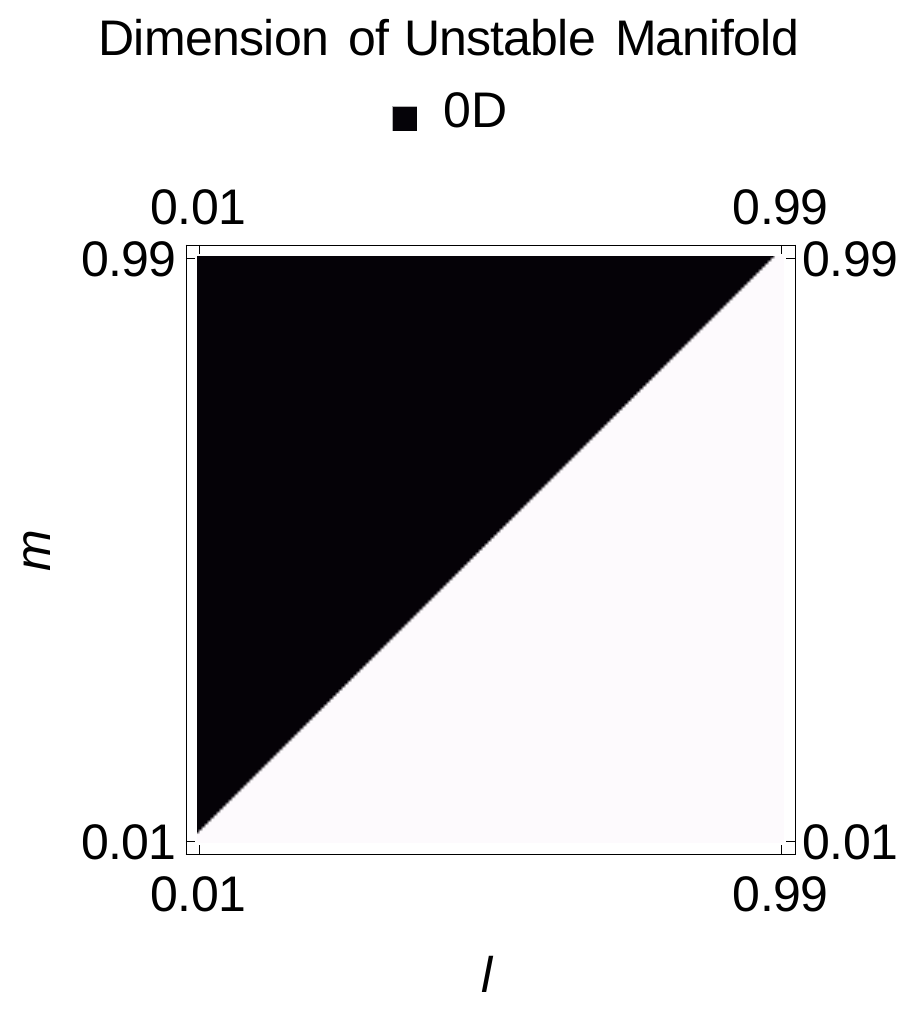}
\caption{The full parameter space colored by the possible dimensions of the unstable manifold for the cubic NLS with longitudinal perturbation.  The left is for the focusing case and the right is for the defocusing case.  The red stars in the plot on the left indicate the four specific cases shown in Figure \ref{fig:DeconinckSegal}.}\label{fig:cubiclong}
\end{figure}

\subsection{Cubic case: stability to transverse perturbations. }

One can also consider the stability of solutions to the two dimensional Schr\"odinger equation
\[   
i w_t = w_{xx} \pm w_{zz} + \zeta  f(|w|^2)w   
\] 
which are periodic in $x$ to perturbations in the transverse $z$ direction. 
  
\begin{figure}
\includegraphics[width=0.48\textwidth]{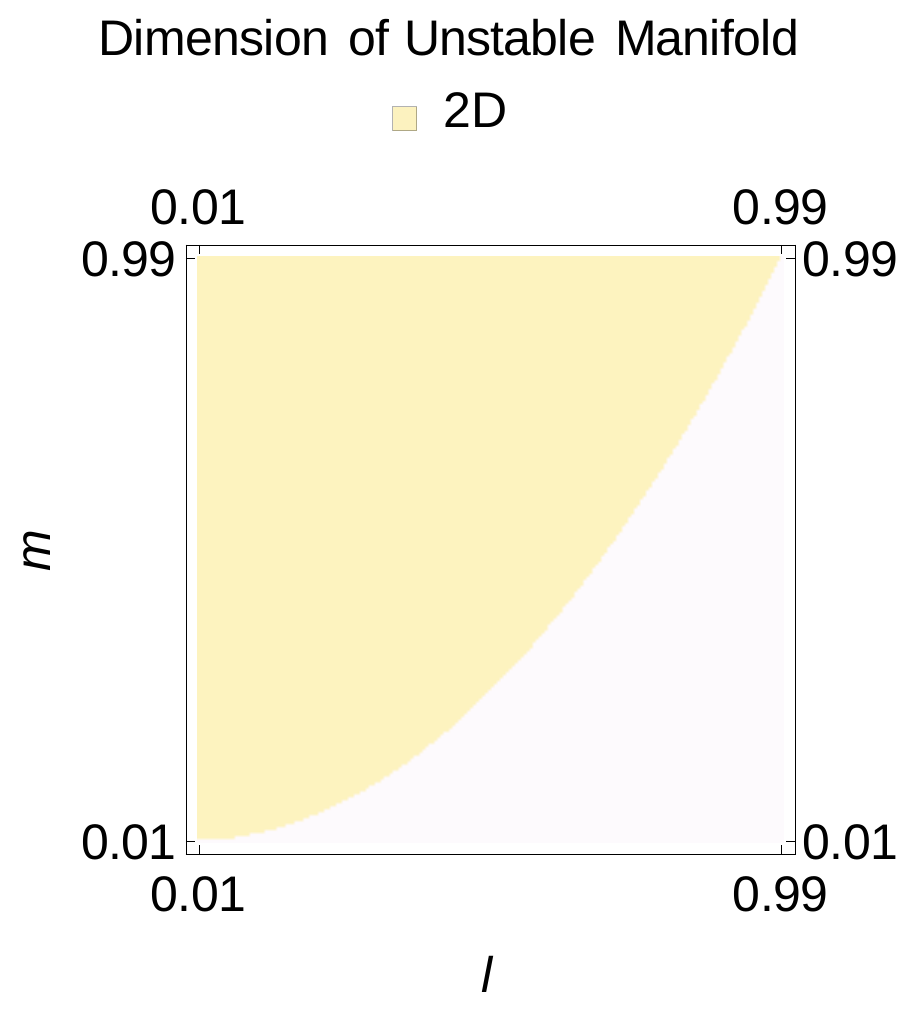}
\includegraphics[width=0.48\textwidth]{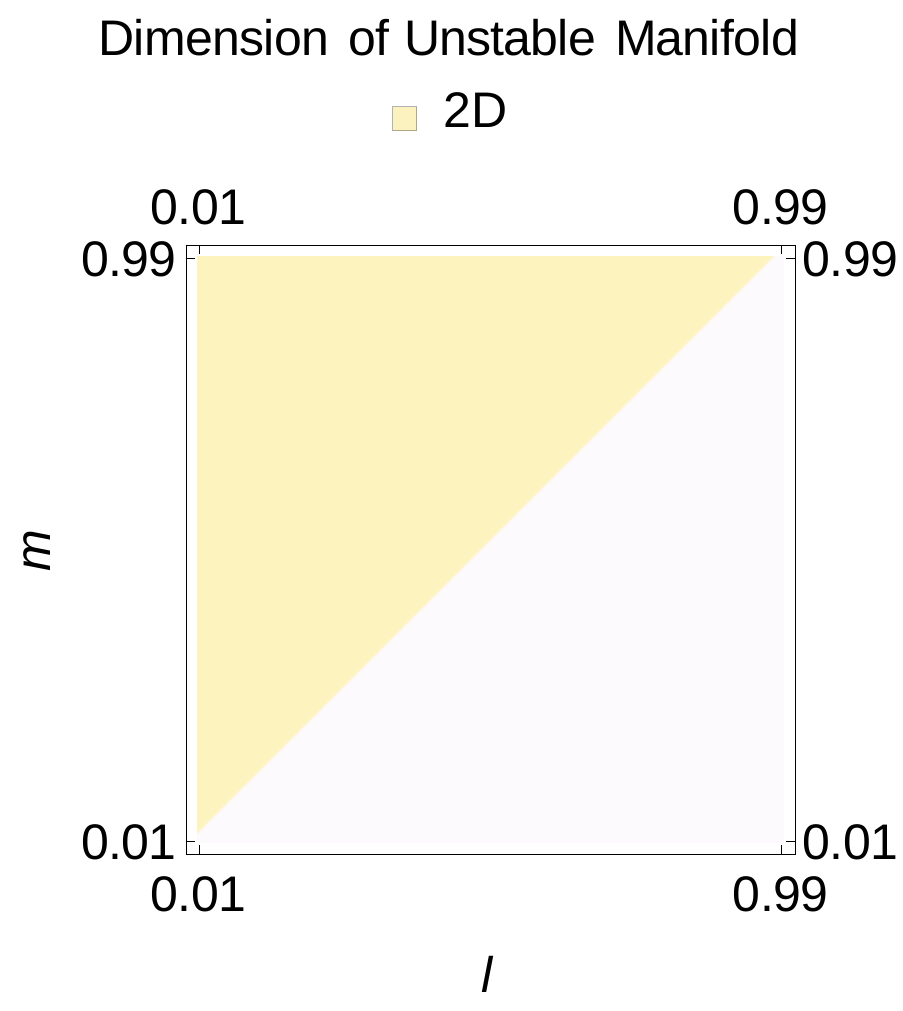}
\caption{The parameter space is colored to indicate the dimension of the unstable manifold, for transverse perturbations in the cubic NLS (left and right are focusing and defocusing, respectively).  Both are the elliptic case; the hyperbolic case looks identical since all parameters return two purely real and two purely imaginary eigenvalues.}\label{fig:cubictrans}
\end{figure}
 We will parameterize in the same way as we did in the above sections for longitudinal stability.  Figure \ref{fig:cubictrans} contains plots in the $k$-$b$ plane for the cubic focusing (left) and defocusing (right), but now coloring based on roots of Eq.~\eqref{eqn:transquartic}.

In this way we depict transverse instability for all parameter values, in both focusing and defocusing regimes, shown here in the elliptic case.  To extend to the hyperbolic case, we note that if $\lambda$ is an eigenvalue of the elliptic case, then $i\lambda$ will be an eigenvalue of the hyperbolic case.  For the cubic NLS (both focusing and defocusing), though it is not explicit in the plots, the entire parameter space produced two purely real eigenvalues and two purely imaginary eigenvalues for the elliptic case, and so the same would be true for the hyperbolic case.

\subsection{Focusing Quintic NLS: Stability to Longitudinal and Transverse Modulations}

As a second example we consider the quintic NLS equation
\[
iw_t = w_{xx} + \zeta |w|^4 w. 
\]
The quintic NLS is $L_2$
(mass) critical on ${\mathbb R}$---the natural scaling invariance
\[
  w(x,t)  \mapsto \gamma w(\gamma^2 x, \gamma^4 t)
\]
leaves the $L_2$ norm invariant. In the focusing case the quintic is the critical exponent for stability of the solitary wave solution for the power-law NLS in one dimension---the solitary wave solution to
\[
  i w_t = w_{xx} + |w|^{2a} w
\]
is stable for $a<2$ and unstable for $a>2$.  For $a=2$ the quintic NLS equation has an extra dynamic symmetry which maps a stationary solution to a collapsing one, the Talanov or lens transformation \cite{T.1970,KT.1985,Fibich} This implies that there are extra elements of the kernel of the linearized operator related to this symmetry, which become real eigenvalues as $a$ is increased. It was shown by Weinstein \cite{W.1985} that the linearized operator for the one dimensional quintic NLS around a soliton solution has two extra elements of the generalized kernel:
\begin{align*}
   & \dim(\ker_0({\mathcal L})) = 2& \\
   & \dim(\ker_1({\mathcal L})) = 2 & \\
   & \dim(\ker_2({\mathcal L})) = 1 & \\
   & \dim(\ker_3({\mathcal L})) = 1. & \\
\end{align*}
This suggests that this would be a case in which to look for failure
of the genericity condition in the periodic problem. We will see that there is in fact a failure of the genericity condition along a co-dimension one curve in parameter space. 

The mixed cubic-quintic NLS in one dimension has also been extensively studied in the optical community, where the quintic term represents the next order correction to the nonlinearity \cite{KS.1998,NMK.2007,CFT.2011}. In this paper we will mostly treat the pure cubic and quintic cases, but the mixed cubic-quintic is a fairly minor generalization of the purely quintic case treated here. 
  
Following the general construction (but with $y_0=\theta_0=c=0$) we have that the traveling waves satisfy 
\[
w = \exp(i(\omega t + S(y))) A(y)
\]
where $A(y)$ and $S(y)$ satisfy
\begin{align}
  & A_y^2 = 2 E -\w  A^2 - \frac{\kappa^2}{A^2} - \zeta A^6/3 & \label{eqn:QuintAmp}\\
  & S_y = \frac{\kappa}{A^2}. & 
\end{align}
Note that the squared amplitude $A^2(y)$ is actually an elliptic
function. To see this note that multiplying Equation
(\ref{eqn:QuintAmp}) by $A^2$ and defining $z(y) = A^2(y)$ gives
\begin{equation}
  \frac{1}{4} z_y^2 = -\kappa^2 + 2 E z -\w  z^2 - \zeta
  z^4/3=P_4(z) 
  \label{eqn:RPoly}
\end{equation}
and so $z(y)$ is expressible in terms of elliptic functions. However, given the form of the polynomial on the right---the general depressed quartic\footnote{Meaning the cubic term is zero.}---it is quite tedious to express the solution in terms of either Jacobi or Weierstrauss elliptic functions. Fortunately, given the viewpoint adopted here, it is not necessary to directly express the traveling wave in terms of the standard elliptic functions. We will, however, need to give an explicit parametrization of the traveling waves. We give a different parametrization than the one chosen by Deconinck and Bottman for the cubic case. There are actually two cases to consider, which work similarly but with minor differences.

If one considers Equation (\ref{eqn:RPoly}) for the square amplitude
in the focusing case then there are periodic
solutions if the  quartic $ P_4(z)= -\kappa^2 + 2 E z - \w  z^2 - \zeta z^4/3$ has either two or four real distinct roots.  For simplicity we will assume that the coefficient of the nonlinear term is $\zeta = 3$.

This polynomial is negative at $z=0$ and negative for $z$ large, so in
order to have a compact interval over which $z$ is positive and real we need at
least two positive real roots in $z$. Since the $z^3$ coefficient is
zero the roots must sum to zero and there can be at most two positive
roots. Thus we have two positive roots and either two negative roots
or two complex conjugate roots with negative real parts.

\subsubsection{Case 1: Four real roots} 

The largest positive real root of the
quartic represents the maximum amplitude of the periodic traveling
wave. From the scaling invariance we can rescale the maximum amplitude
to be one, so we can assume that the largest positive root of the
quartic is one. We will denote the other two roots  by $-l$ and $m$, and we
will assume $-l\leq 0\leq m\leq 1$. Since $P_4(z)$ is a depressed
quartic the roots must sum
to zero, and thus the fourth root is $l-(1+m)$, as shown in Figure \ref{fig:quinticpolys}(a).  The polynomial $P_4(z)$ has two negative roots $-l$ and $l-(1+m)$, so without loss of
generality we can require $l\leq (1+m)-l$ or $l \leq \frac{1+m}{2}$.  Equating
\[
-\kappa^2 + 2 E z -\w  z^2 - z^4 = (1-z)(z-m)(z+l)(z-l+m+1)
\]
we get
\begin{align*}
  & \kappa = \sqrt{ l m (1+m-l)} & \\
  &E = (1+m)(m-l)(l-1)/2& \\
  & \omega = -1 - m - m^2 +(1+m)l - l^2& 
\end{align*}
The parameter space is defined by the inequalities
\begin{align*}
  & 0 \leq m \leq 1 & \\
  & 0 \leq l \leq \frac{m+1}{2}. & 
\end{align*}

\begin{figure}
\includegraphics[width=0.32\textwidth]{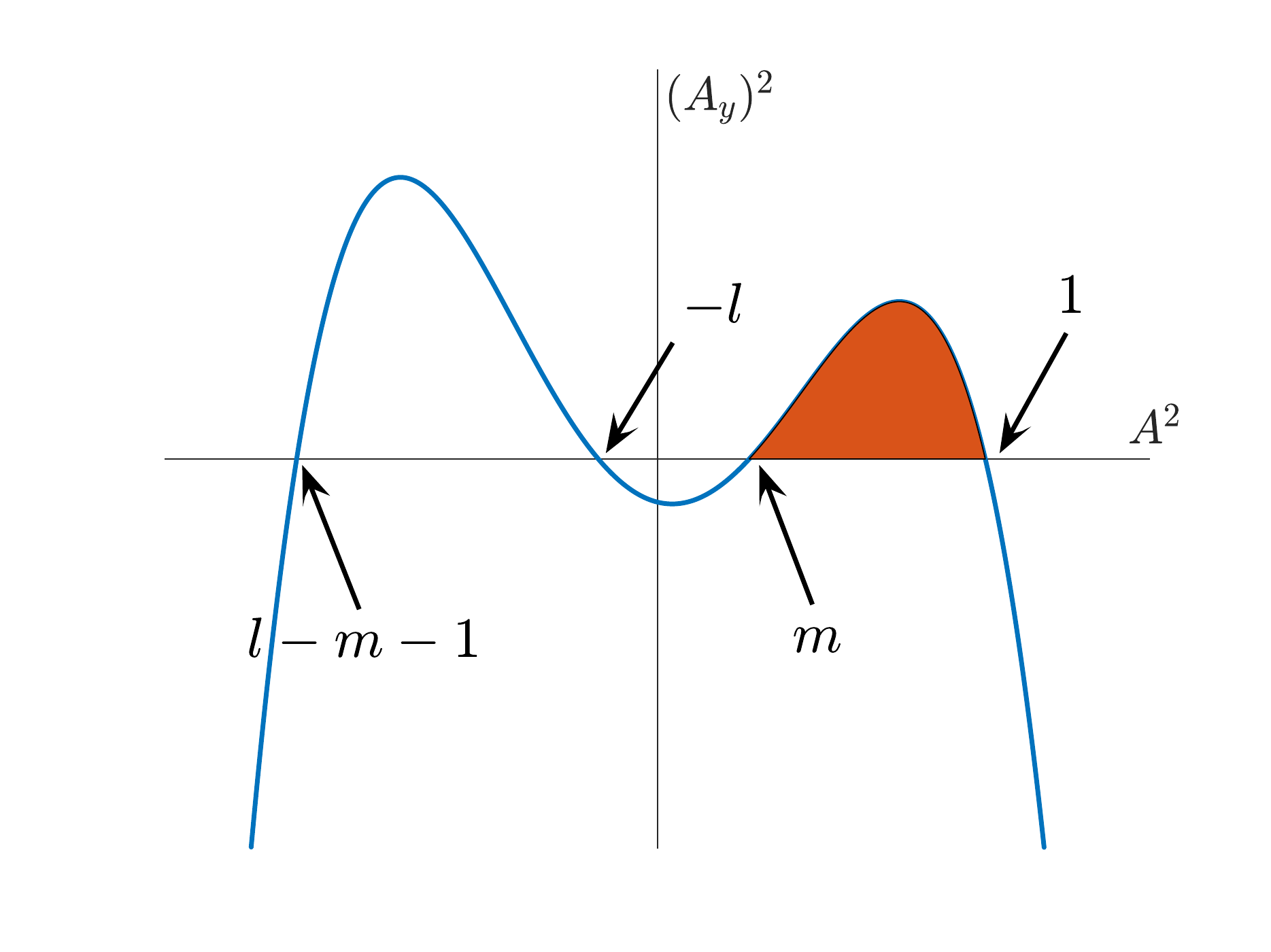}
\includegraphics[width=0.32\textwidth]{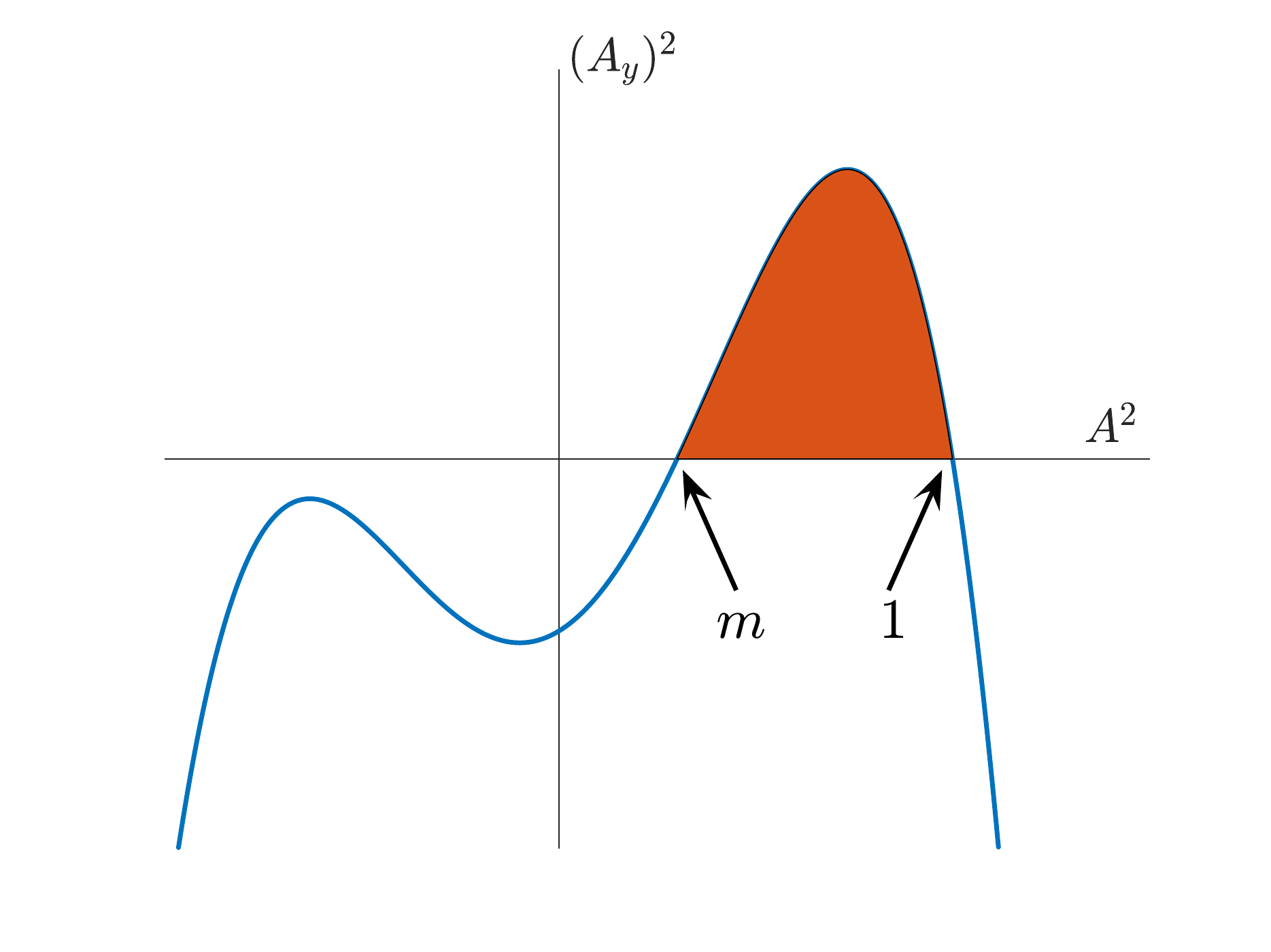}
\includegraphics[width=0.32\textwidth]{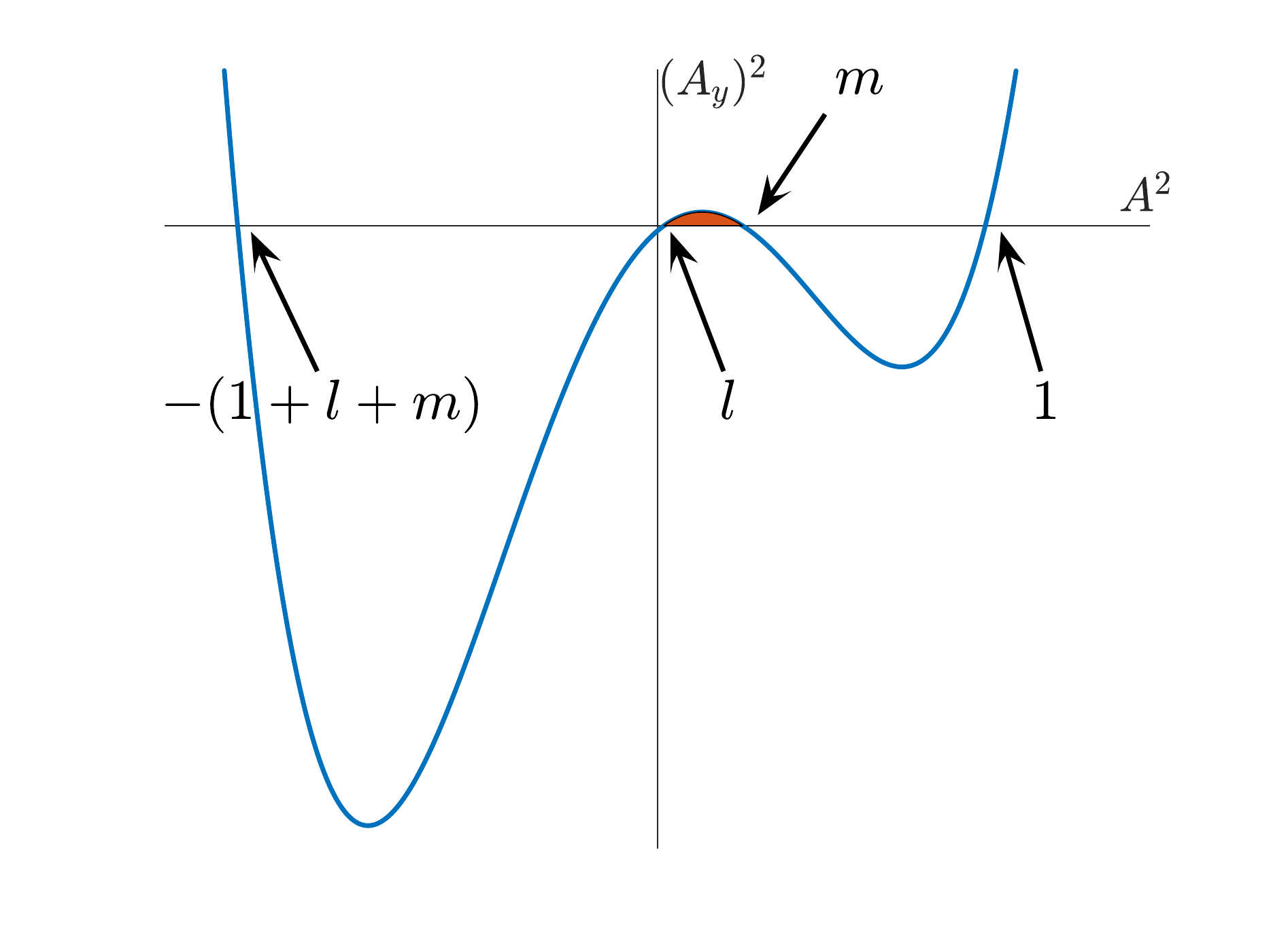}
\caption{We illustrate how we parameterize the quintic NLS by plotting the polynomial $A^2(A_y)^2=P_4(A^2)$ from Eq.~\eqref{eqn:QuintAmp}.  The left two figures are for the focusing case, on the left is a parameterization that allows for four real roots, and in the middle is one that allows for only two.  On the right is for the defocusing case.}\label{fig:quinticpolys}
\end{figure}

The boundaries of this region represent various degenerations and
special solutions, which we comment on briefly and indicate in Figure \ref{fig:quintic4}(a). First note that the
range $\frac{1+m}{2} \leq l \leq 1+m$ represents valid periodic
solutions but with the roots $-l$ and $1+m-l$ switched, which makes no
difference from the point of view of the solutions. The boundary case
$m=\frac{1+l}{2}$ is interesting, as here the quartic has a double
root. When the quartic has a root of higher multiplicity the
discriminant vanishes,
and the associated elliptic function solutions
reduce to trigonometric functions. After a tricky integration and some
slightly tedious algebra we get the following trigonometric solution in the
degenerate case $l = \frac{1+m}{2}$
\[
  z(y) = A^2(y) = \frac{1+m}{2} + \frac{1-m}{2} \frac{(1+3m) \cos^2
  \beta y  - (3+m) \sin^2  \beta y }{  (1+3m) \cos^2
  \beta y + (3+m) \sin^2  \beta y  }
\]
where
\[
\beta = \sqrt{(1+3 m)(3+m)}.
\]
This is a non-trivial phase solution, with the phase following from
$S_y = \frac{\kappa}{A^2(y)}.$ We have not been able to find this
solution in the literature, so we suspect that it is new. 
Note that this solution extends to the mixed cubic-quintic
nonlinearity in a straightforward way-- in the mixed cubic-quintic
equation the coefficient of $z^3$ is not zero but is instead
fixed. One can  parameterize the roots the same way, $1,m,-l$, with
only the fourth root differing. The elliptic function degenerates to a
trigonometric function when the third and fourth roots are
equal. (Note, however, that in moving to the mixed cubic-quintic the
scale invariance is lost.)

Along the boundary $m=1$ we have constant amplitude solutions---the
Stokes or plane waves. 
The other boundary points represent other trivial-phase special
cases: the boundaries $l=0$ and $m=0$ each represent a single
one-parameter family of trivial-phase solutions: for $l=0$ the
amplitude $A(y)$ oscillates between two positive values (similar to
the dnoidal solution in the cubic case) while for $m=0$ the amplitude
$A(y)$ oscillates between $-1$ and $1$, similar to the cnoidal
solution in the cubic case.  Finally  the
intersection $l=m=0$ represents the well-known solitary wave
solution. The first graph in Figure \ref{fig:quintic4} depicts the parameter space, with the various special solutions above marked.  
 
 \begin{figure}
\begin{tabular}{p{0.45\textwidth} p{0.45\textwidth}}
  \vspace{0pt} \includegraphics[width=0.45\textwidth]{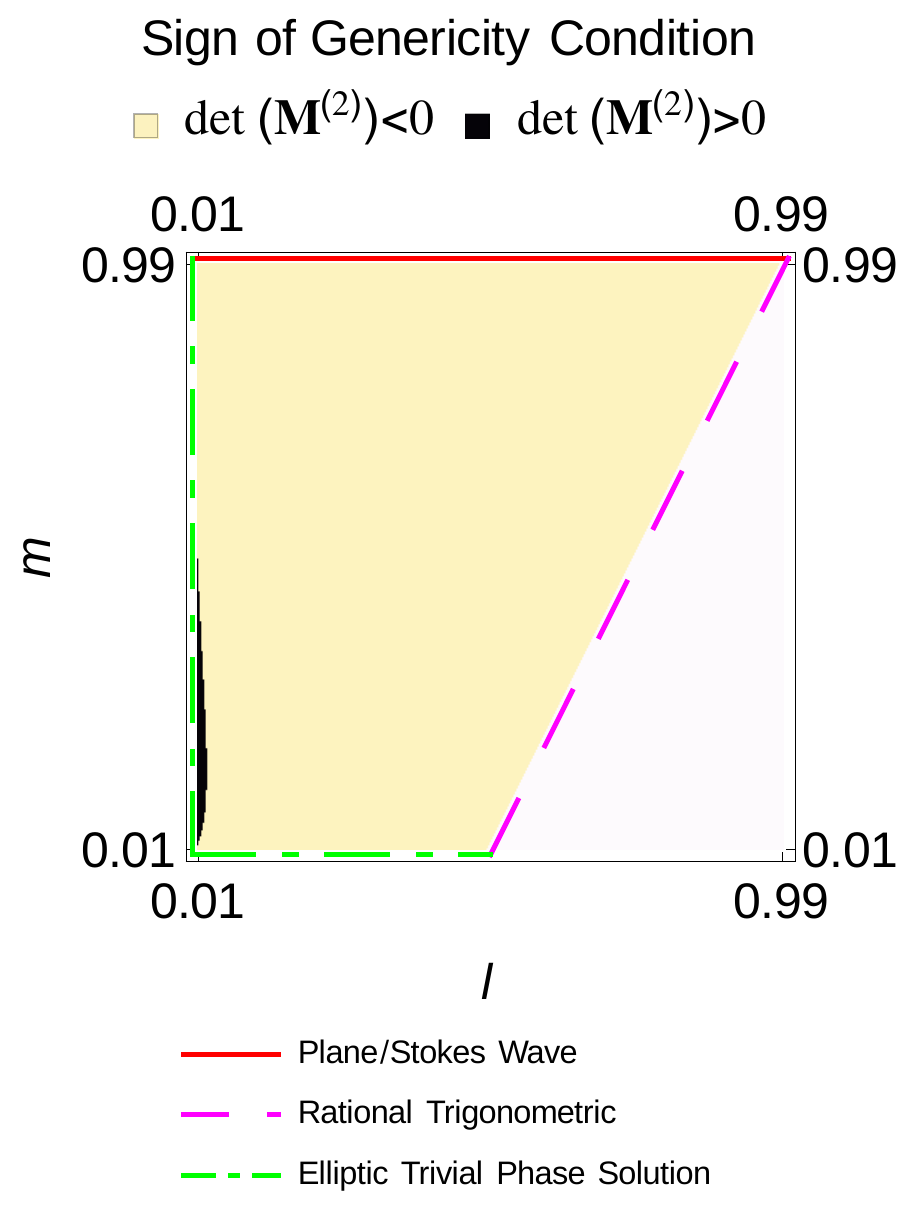} &
  \vspace{0pt} \includegraphics[width=0.45\textwidth]{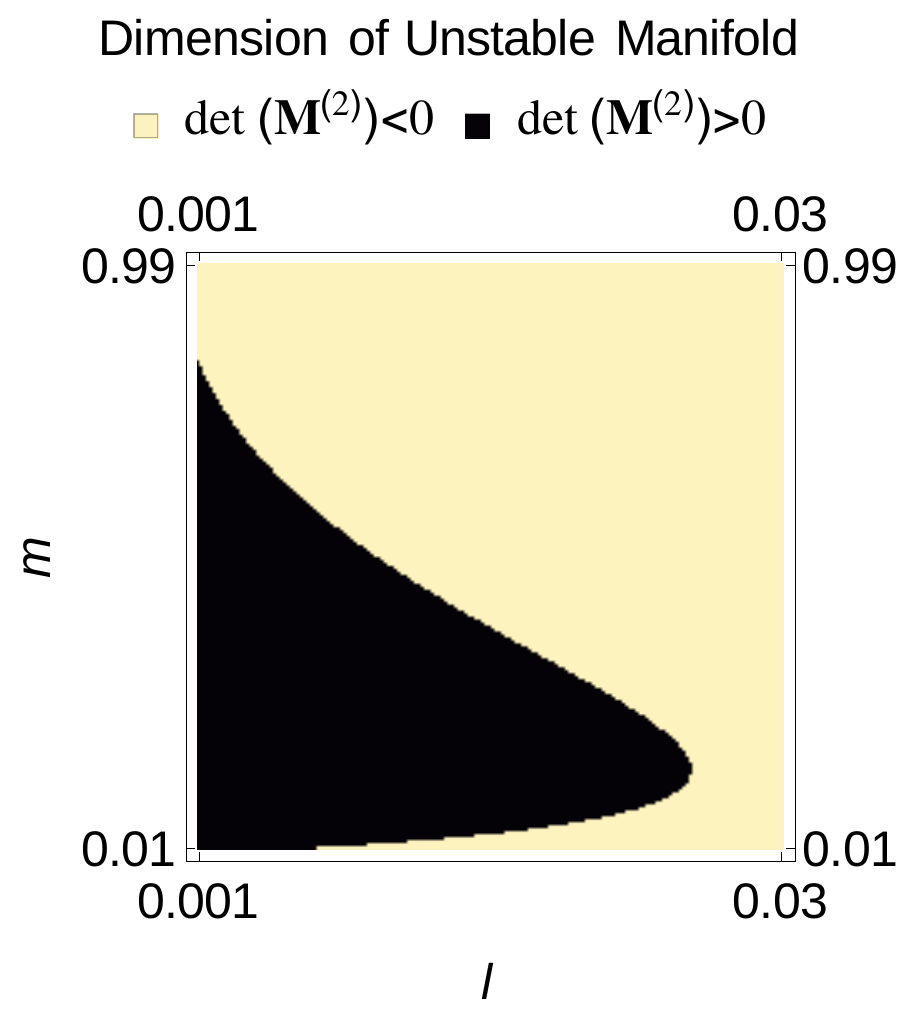}
\end{tabular}
\caption{The parameter space $m \in (0,1)$, $l \in [0,\frac{1+m}{2})$ showing the sign of $\det({\bf M}^{(2)})$. Recall that $\det({\bf M}^{(2)}) = 0$ implies failure of the second genericity condition and the existence of a higher generalized kernel. The plot on the right is rescaled to highlight the region in which  $\det({\bf M}^{(2)})>0$.  The various bordering lines in the plot on the left indicate the type of degeneration or special solution that exists on the boundaries of the parameter space.}\label{fig:quintic4}
\end{figure}
As was shown in Theorem \ref{thm:NullSpace} there are two genericity conditions. The
null-space of the linearized operator is two dimensional unless the
quantity $\sigma = T_E \eta_{\kappa} - T_{\kappa} \eta_E$ is zero, in which case
the null-space is higher dimensional. The second genericity condition
is the vanishing of the determinant
\[
\det({\bf M}^{(2)})= \left| \begin{array}{cc}  a_2& b_2 \\ b_2 & d_2 \end{array}\right|
\]
which signals the existence of a Jordan chain of length longer than
two. The fact that the solitary wave has a Jordan chain of length
four \cite{W.1985} suggests in particular that the second might
fail in the periodic case. Numerical computations show that $\sigma$
never vanishes, but Figure \ref{fig:quintic4}(a) depicts the sign of
$a_2 d_2 - b_2^2$ as a function of $(l,m)$, while  Figure \ref{fig:quintic4}(b) gives a closeup
of the region $(l,m) \in
(0,0.03)\times (0,1)$. One can see that there is a small region where $a_2 d_2 - b_2^2=0$, a curve emerging from
the solitary wave case $(l,m)=(0,0)$ along which this determinant
vanishes and the linearized operator has a Jordan chain of length at least three.

\begin{figure}
\includegraphics[width=0.31\textwidth]{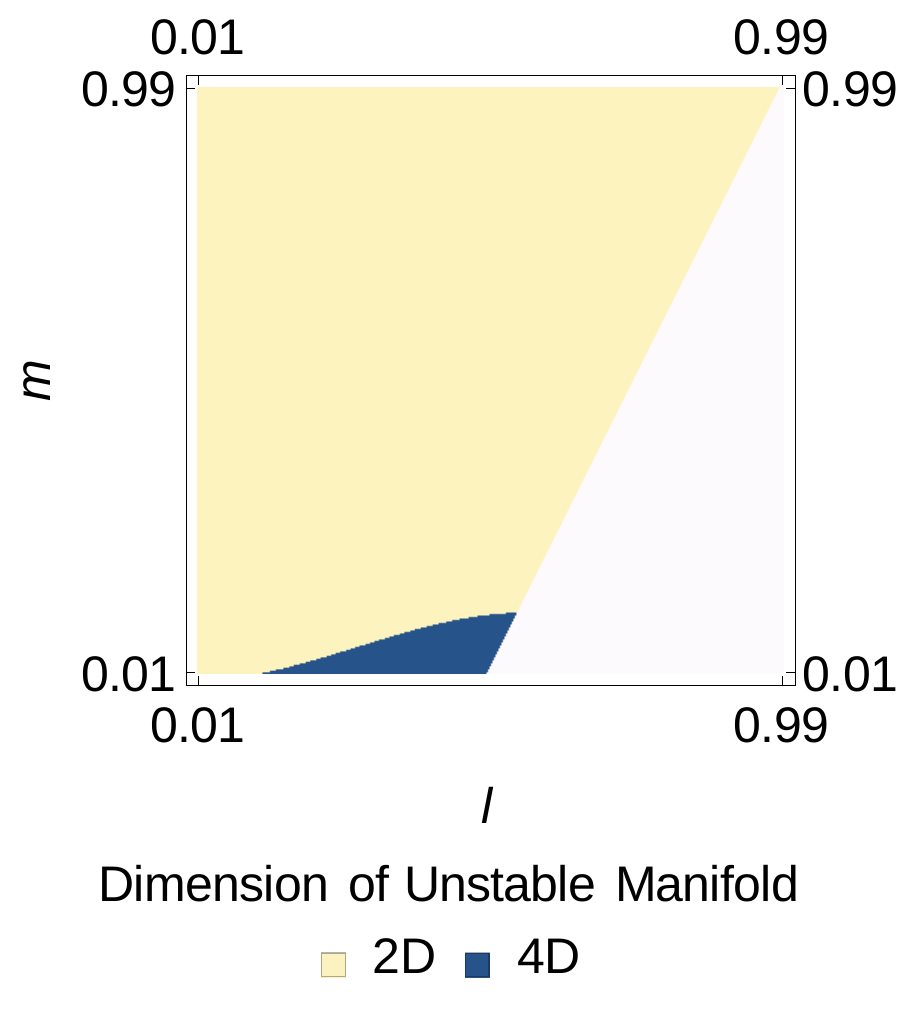}
\includegraphics[width=0.31\textwidth]{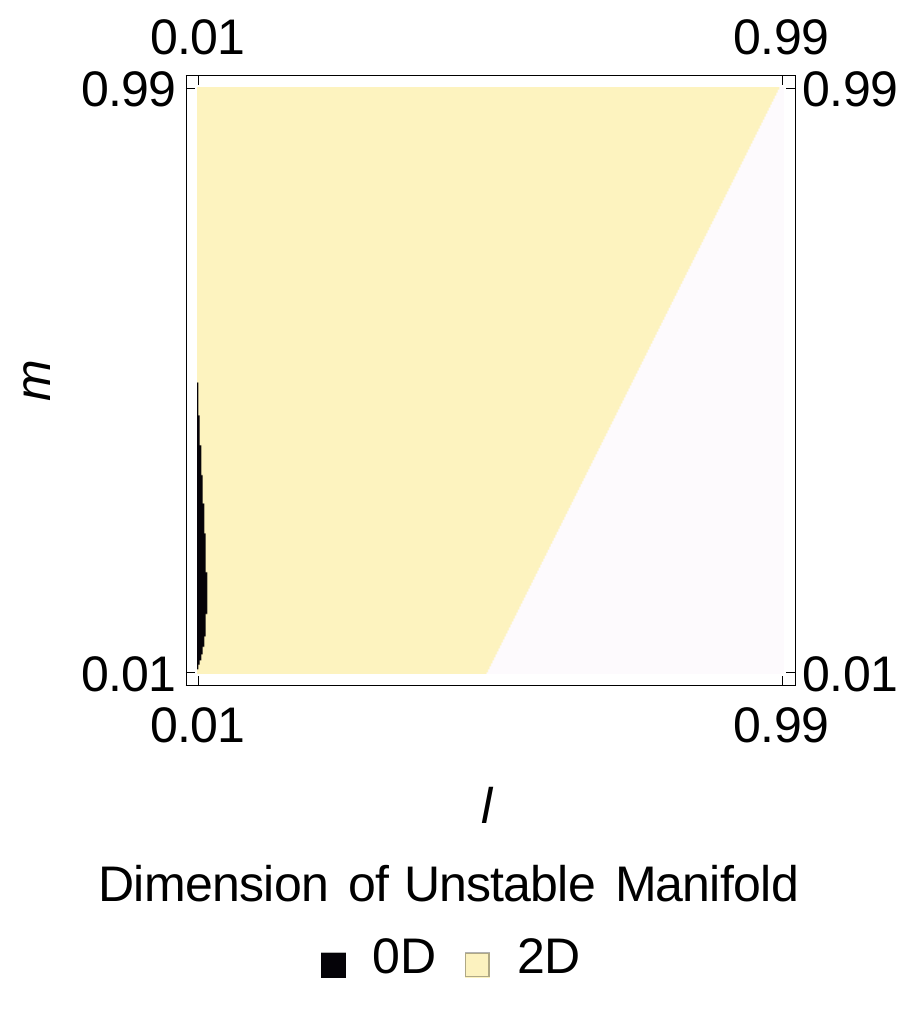}
\includegraphics[width=0.31\textwidth]{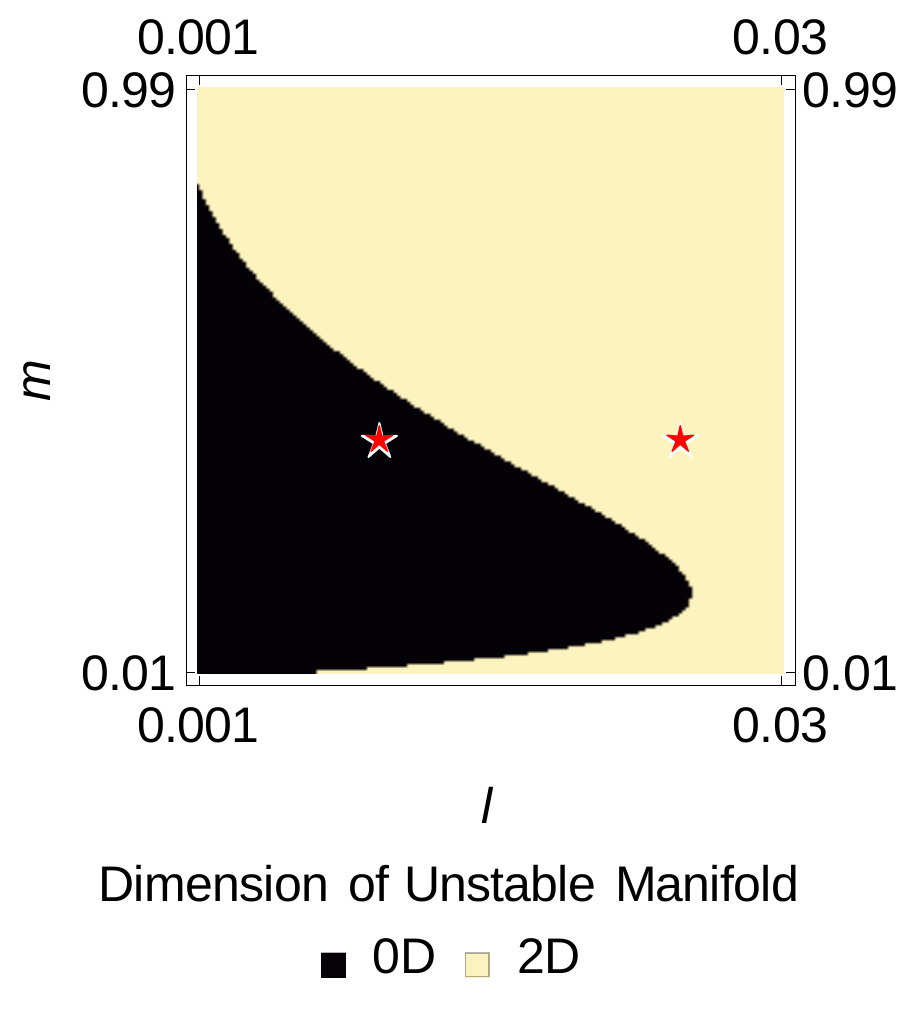}
\caption{The full parameter space colored by the possible dimensions of the unstable manifold for the focusing quintic NLS, parameterized with four real roots of $P_4(A^2)$.  The left is for longitudinal perturbations, and the middle and right are for transverse perturbations (the right is rescaled, similarly to Figure \ref{fig:quintic4}).  The red stars in the plot on the right indicate the two specific cases shown in Figure \ref{fig:quintic4spectrum}.}\label{fig:quintic4trans}
\end{figure}
\begin{figure}
\includegraphics[width=0.48\textwidth]{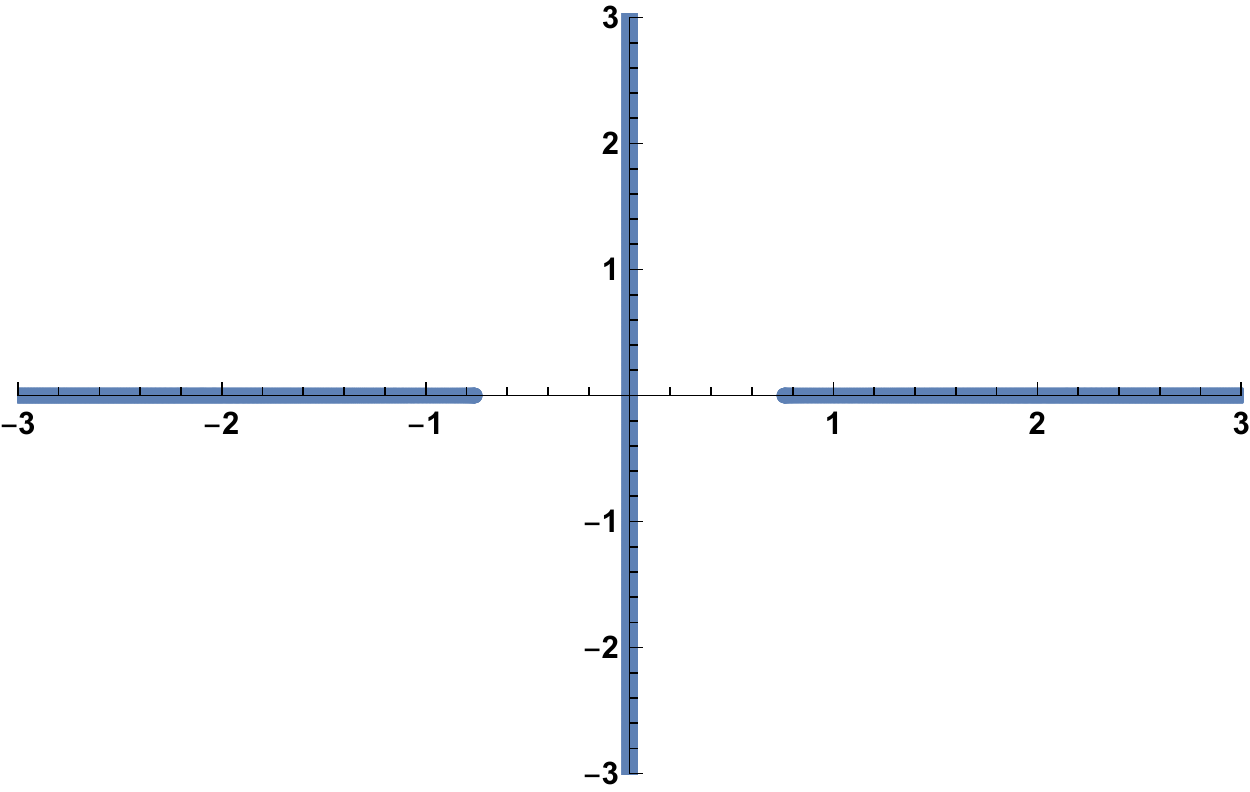}
\includegraphics[width=0.48\textwidth]{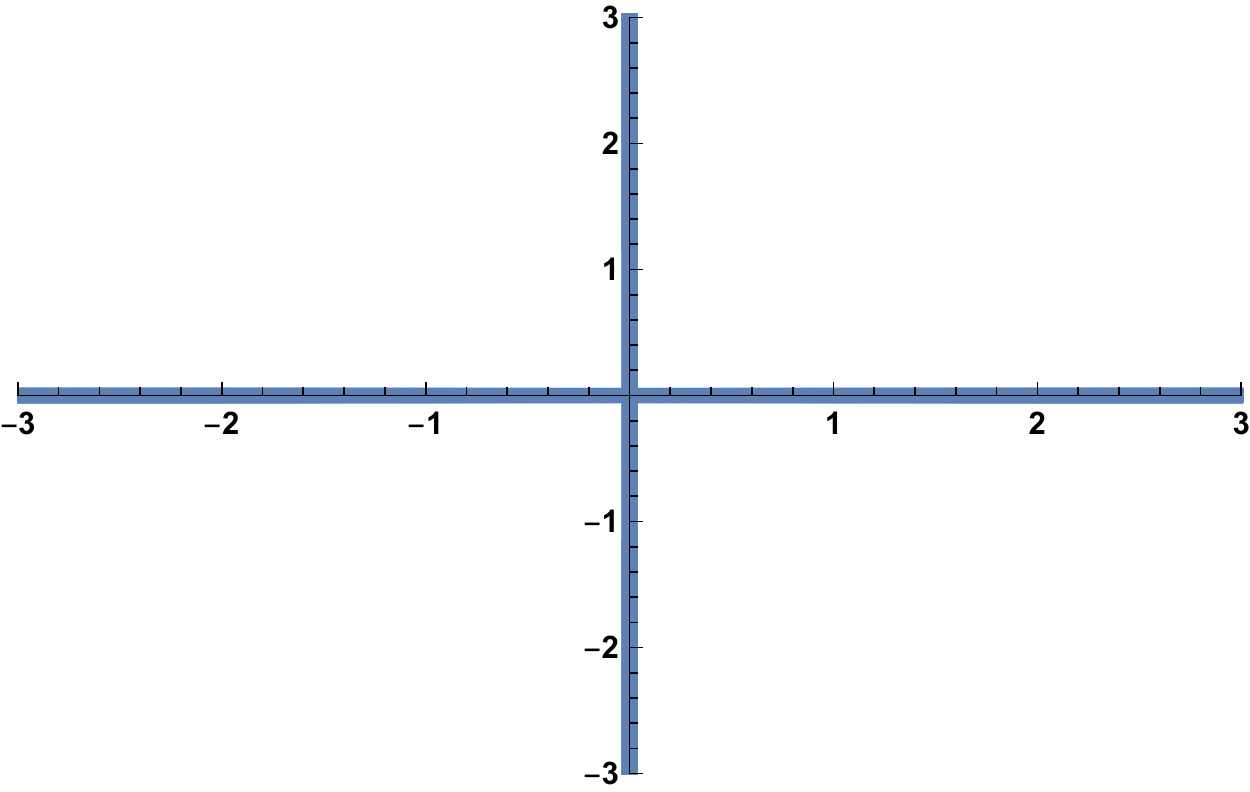}
\caption{Shown are the discretely sampled approximation to the continuous curves of the spectrum for transverse perturbations of the focusing quintic NLS, for two pairs of parameter values.  On the left, we use $(l,m) = (.01,.4)$, which falls in the region with a 0D unstable manifold, while on the right, we use $(l,m)=(.025,.4)$, which falls in the region with a 2D unstable manifold. }\label{fig:quintic4spectrum}
\end{figure}

The next figure, Figure \ref{fig:quintic4trans}, depicts the modulational stability of the periodic
traveling waves to longitudinal perturbations (in left panel) and transverse elliptic perturbations (in middle and right panel). This figure is
essentially identical to the first. In most of the parameter space, the
four dimensional null-space at $k=0$ breaks up into two real eigenvalues and two
purely imaginary eigenvalues. Inside the curve along which $\det({\bf
  M}^{(2)})=0$, however, the four dimensional null-space breaks up into
four purely imaginary eigenvalues - there is no modulational
instability. Interestingly, however, there is a finite wavelength
instability. Note that since there are no fully complex eigenvalues in this case, the transverse hyperbolic figure would look similar - except that inside the curve along which $\det({\bf M}^{(2)})=0$, the four dimensional null-space breaks up into four purely real eigenvalues, so there is modulational instability. The final figure in this series, Figure \ref{fig:quintic4spectrum}, depicts the
numerically computed spectrum for two points, one $(l,m)=(.01,.4)$ in the region where
$\det({\bf M}^{(2)})>0$ and there is no modulational instability, and one
$(l,m)=(.025,.4)$ in the region where $\det({\bf M}^{(2)})<0$ and there is a modulational
instability. We see that the numerically computed spectrum agrees with
the predictions of modulation theory. In the first case the spectrum
in the neighborhood of the origin is along the imaginary axis, with a
band of real spectrum beginning at $\lambda \approx \pm .75$. In the
second case, in contrast there are curves of continuous spectrum along
the real and imaginary axes. There is good quantitative agreement here
too between  direct numerical simulation and the modulation
theory predictions. In the first case numerical simulations and
numerical differentiation with $\mu = .01$ give
$\lambda_i(\mu) = \pm 11.13i + O(\mu^2)$ and $\lambda_i(\mu) = \pm
2.14 i + O(\mu^2)$ while the normal form calculation gives   $\lambda_i(\mu) = \pm 11.25i + O(\mu^2)$ and $\lambda_i(\mu) = \pm
2.13 i + O(\mu^2)$ In the second case direct numerical simulation
gives $\pm 1.92i + O(\mu^2)$ and $\pm 8.35 +O(\mu^2)$ and modulation
theory gives  $\pm 1.92i + O(\mu^2)$ and $\pm 8.37 +O(\mu^2)$. 

\subsubsection{Case 2: Two real roots}

In this case the depressed quartic $P_4(z)$ has two real positive
roots and two complex conjugate roots in the left half-plane. We
choose a slightly different normalization in this situation. We can
again normalize so that the largest real root is equal to 1, and denote the other real root by $m$.  Then we will take the complex conjugate pair of roots to be $-\frac{(1+m)}{2} \pm i
\tan \phi$, with the angle $\phi \in (0,\frac{\pi}{2})$, as shown in Figure \ref{fig:quinticpolys}(b). In this case the parameters can be expressed as
\begin{align*}
  & \kappa = \sqrt{m \left(\tan^2 \phi + \left(\frac{1+m}{2}\right)^2\right)}& \\
  & E = \frac{1}{2} (1+m)\left(\tan^2\phi + \left(\frac{1-m}{2}\right)^2\right) & \\
  &  \omega = -\frac{1}{4}(3 m^2 + 2 m +3 - 4 \tan^2\phi )& 
\end{align*}
The allowable parameter space is the rectangle  $(\phi,m) \in [0,\frac{\pi}{2})\times [0,1].$ When $\phi=0$ there is a real root of multiplicity two on the real axis and the
solution reduces to the trigonometric solution given in the previous
case.
 See Figure \ref{fig:quintic2}(a) for longitudinal instability: for much of the parameter domain, the unstable manifold is four dimensional, but for a sizable portion it is only two dimensional.  Again, in Figure \ref{fig:quintic2}(b), we see that for transverse instability, the unstable manifold is always two dimensional, both in the elliptic (shown) and the hyperbolic cases.
\begin{figure}
\includegraphics[width=0.48\textwidth]{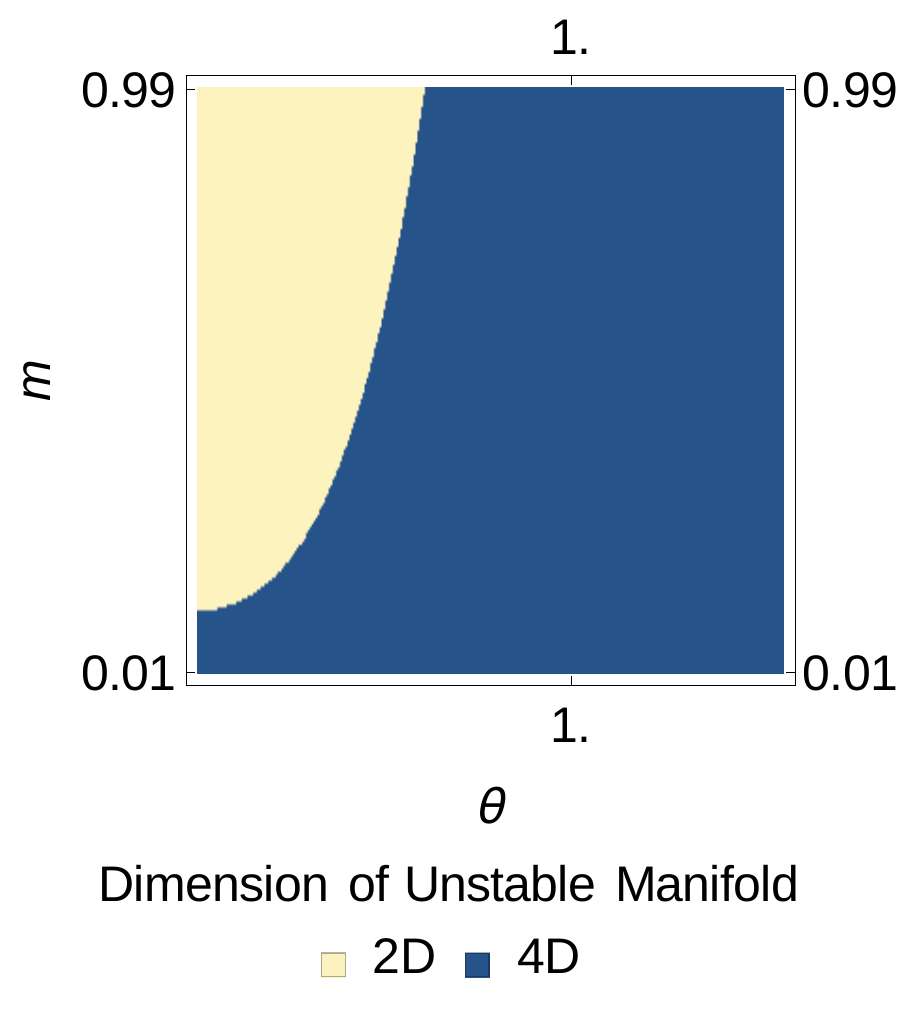}
\includegraphics[width=0.48\textwidth]{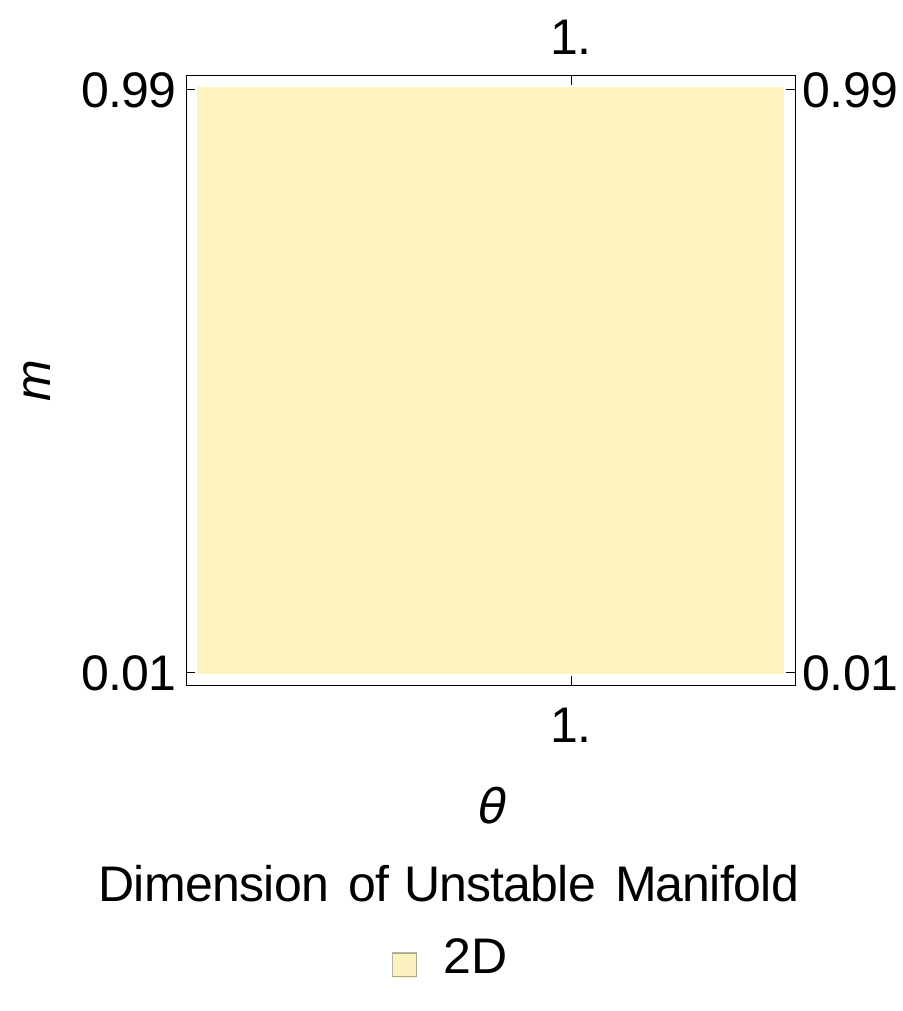}
\caption{The full parameter space colored by the possible dimensions of the unstable manifold for the focusing quintic NLS, parameterized with two real roots of $P_4(A^2)$.  The left is for longitudinal perturbations, and the right is for transverse perturbations.}\label{fig:quintic2}
\end{figure}

\subsection{Defocusing Case}

In the defocusing case we take $\zeta = -3$. In this case in order to
have a periodic solution we need $P_4(z)$ to have a compact interval
of the positive real axis on which $P_4(z)>0$. Therefore $P_4(z)$ must have
three positive real roots and (from the fact that $P_4(z)$ has no
cubic term) one negative root. We can after rescaling take the three positive roots to
be $1, l, m$ in which case the fourth root is $-(1+l+m)$, as shown in Figure \ref{fig:quinticpolys}(c). The parameter space in this case is the triangle $0\leq m \leq l \leq 1.$
In this case the parameters can be expressed as
\begin{align*}
  & \kappa = \sqrt{l m (1+l+m) }& \\
  & E = \frac{1}{2} (1+l)(1+m)(l+m) & \\
  & \omega = 1 +l^2 +(l+m)(1+m) & 
\end{align*}
Given the ordering the trivial-phase solutions occur when $l=0$. There
are two degenerations, when $m=1$ and when $m=l$, where the quartic
has roots of higher multiplicity. The case $m=l$ admits a constant
amplitude solution, which is not particularly interesting. The other
case, when $m=1$ and $1$ is a double root of the quartic, represents a
homoclinic type solution or dark solitary wave. 

The formula for $z(y)$ in the case $m=1$ is given by
\[
  z(y) = A^2(y) = \frac{l(l+3) + (l+2)(1-l) \tanh^2 (\beta y)}{(l+3)-(1-l) \tanh^2 (\beta y)} 
\]
 
with $\beta = \sqrt{(1-l)(l+3)}.$ Again we have modded out the
scaling, translation $U(1)$ and Galilean invariances. Obviously we have $z(y)
\rightarrow 1$ as $|y| \rightarrow \infty$ and $z(0)=l$. This is a
non-trivial phase solution but when $l=0$ it reduces to the front
type solution
\[
  A(y) = \frac{\sqrt{2} \tanh(\sqrt{3} y)}{\sqrt{3 - \tanh^2 (\sqrt{3} y) }}.
\]
Each of these degenerations and special solutions is indicated in Figure \ref{fig:quinticdef}(b).  Finally, we also observe in Figure \ref{fig:quinticdef} that the defocusing quintic NLS appears longitudinally stable for the entirety of the parameter domain, with four purely imaginary eigenvalues, and transversely unstable for entirety of the parameter domain, with a two dimensional unstable manifold (in both elliptic and hyperbolic cases).
\begin{figure}
\begin{tabular}{p{0.45\textwidth} p{0.45\textwidth}}
\vspace{0pt}\includegraphics[width=0.45\textwidth]{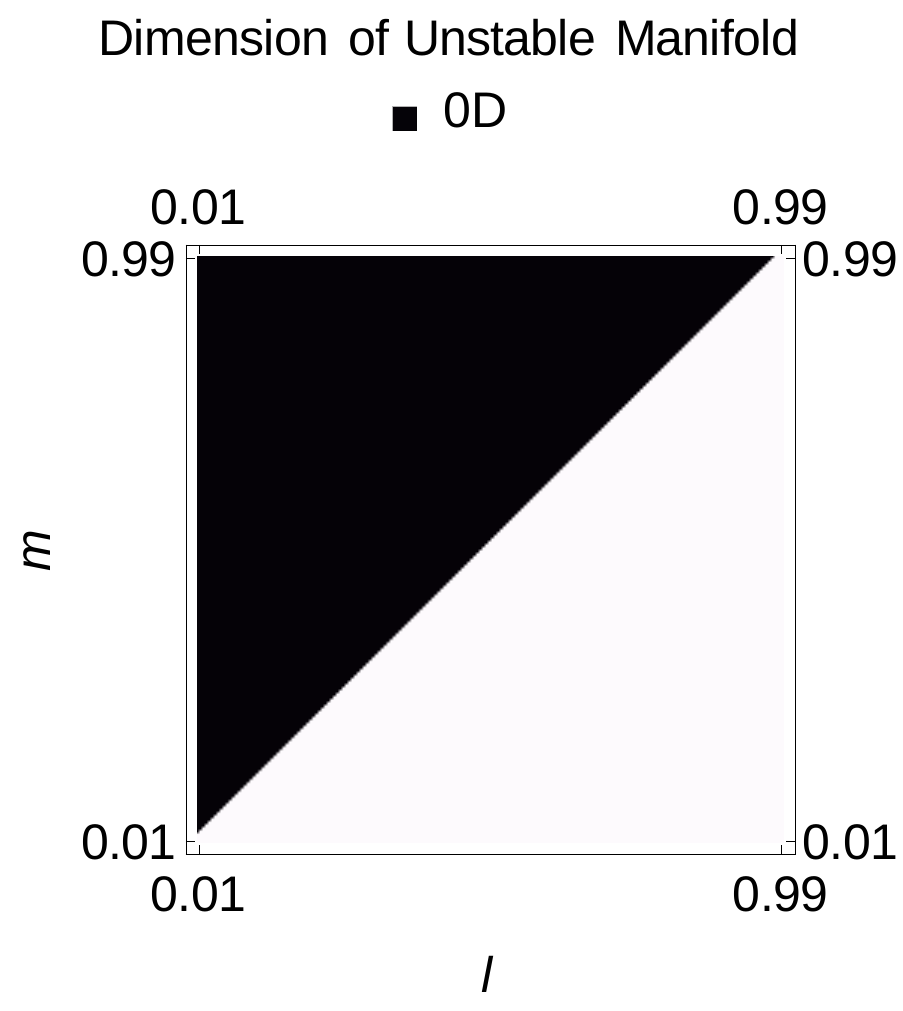} &
\vspace{0pt}\includegraphics[width=0.45\textwidth]{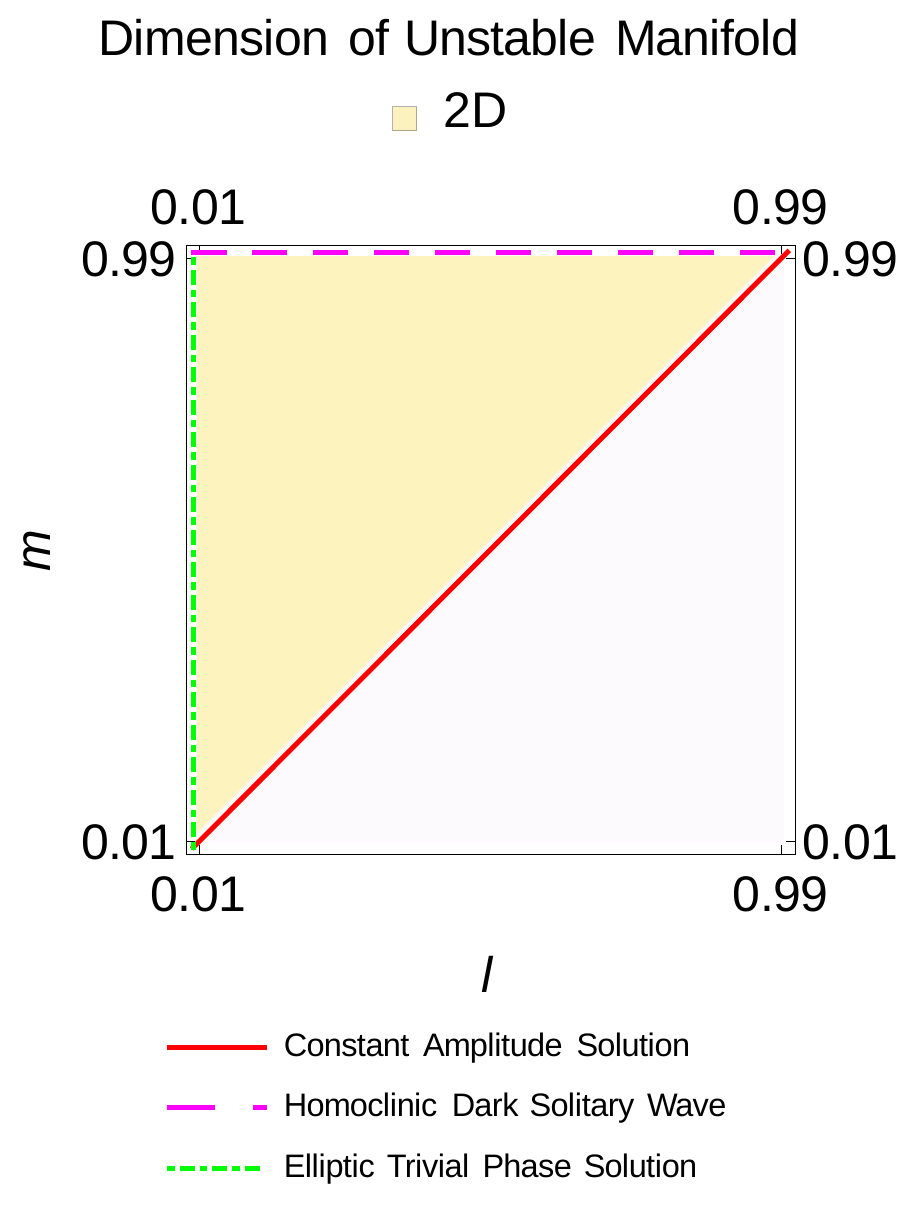}
\end{tabular}
\caption{The full parameter space colored by the possible dimensions of the unstable manifold for the defocusing quintic NLS.  The left is for longitudinal perturbations, and the right is for transverse perturbations.  The various bordering lines in the plot on the right indicate the type of degeneration or special solution that exists on the boundaries of the parameter space.}\label{fig:quinticdef}
\end{figure}

\section{Conclusion}
In this paper we have presented a rigorous derivation of the normal form for the branches of the continuous spectrum emerging from the origin in the spectral plane, where the operator in question arises from the linearization of the nonlinear Schr\"odinger equation around a traveling wave solution. In the case of the nonlinear Schr\"odinger equation the spectrum locally consists of four curves that are locally straight lines through the origin---if the lines are not purely vertical, corresponding to an imaginary eigenvalue, then the traveling wave is necessarily (spectrally) unstable.
General considerations suggest that for a Hamiltonian flow with a non-degenerate symplectic form and $k$ conserved quantities then generally there will be $2k$ curves emerging from the origin. In the cubic case these results agree (in the case of longitudinal perturbations) with those derived by Deconinck and Segal for the focusing
cubic NLS using the machinery of the inverse scattering transform. For the non-cubic case, and for the question of the (in)stability of periodic traveling waves to transverse perturbations in the cubic case, the results are new.

There are a couple of points in this calculation that would be interesting to clarify
further. One such point is the linear nature of the spectral curves in a neighborhood of the origin. The Hamiltonian structure implies that the linearized operator will generically have a non-trivial Jordan block structure. It is well-known that under a generic perturbation a Jordan block will break non-analytically---typically the eigenvalues, etc.~will be represented by a Puiseaux series in fractional powers of the perturbation parameter $\mu$. In all of the examples of which we are aware, the form of the perturbations guarantees that the bifurcation is analytic---in powers of $\mu$. It would be interesting to see this emerge from the Hamiltonian structure, rather than from a direct calculation using a basis for the null-space. It would also be interesting to understand the modulational instability for a system in which there are conserved quantities which {\em do not} all Poisson commute. One example is the NLS system of Manakov type: 
\begin{align*}
& i w^{(1)}_t = w_{xx}^{(1)} + \zeta f(|w^{(1)}|^2 + |w^{(2)}|^2) w^{(1)} & \\
& i w^{(2)}_t = w_{xx}^{(2)} + \zeta f(|w^{(2)}|^2 + |w^{(1)}|^2) w^{(2)}. &
\end{align*}

\medskip

{\bf Acknowledgements}: J.C.B. would like to acknowledge support from the National Science Foundation under grant DMS 16-15418. M.A.J. would like to acknowledge support from the National Science Foundation under grant DMS-1614785. 

\appendix

\section{Constructing the periodic eigenfunctions}\label{ap:eigenfunctions}

In this Appendix, we show how one can find the two periodic null vectors and two periodic generalized null vectors for $\mathcal L$.  Given the four solutions to $\mathcal L {\bf u}=0$ from Eq.~\eqref{eqn:ODESOLs} and the two additional quantities from Eq.~\eqref{eqn:ODESOLSgeneral}, we can write linear combinations
\begin{subequations}
\begin{align}
{\bf u}_0=&\sigma\left(\begin{array}{c}0\\ A\end{array}\right)\\
{\bf u}_1=&\gamma\left(\begin{array}{c}A_E\\AS_E\end{array}\right)+\rho\left(\begin{array}{c}A_\kappa\\AS_\kappa\end{array}\right)+\sigma\left(\begin{array}{c}A_\w\\AS_\w\end{array}\right)\\
{\bf u}_2=&\sigma\left(\begin{array}{c}A_y\\AS_y\end{array}\right)\\
{\bf u}_3=&\tau\left(\begin{array}{c}A_E\\AS_E\end{array}\right)+\nu\left(\begin{array}{c}A_\kappa\\AS_\kappa\end{array}\right)+\sigma\left(\begin{array}{c}0\\yA/2\end{array}\right).
\end{align}
\end{subequations}
Then $\mathcal L{\bf u}_{0,2}=0$ and $\mathcal L{\bf u}_{1,3}={\bf u}_{0,2}$.  The null vectors ${\bf u}_{0,2}$ are already periodic, so we need to choose $\sigma,\ \gamma,\ \rho,\ \tau,\ \nu$ to enforce the boundary conditions ${\bf u}_j(T)={\bf u}_j(0)$ on the generalized null vectors ${\bf u}_{1,3}$.  
Given that $A(0)=A(T)$ and $S(0)=S(T)-\eta$, we have
\begin{subequations}
\begin{align}
    A_E(0) &= \left[A(0)\right]_E=\left[A(T)\right]_E=A_y(T)T_E+A_E(T)\\
\nonumber    S_E(0) &= \left[S(0)\right]_E=\left[S(T)-\eta\right]_E\\
    &=S_y(T)T_E+S_E(T)-\eta_E
\end{align}

and similarly for $A_\kappa$, $A_\omega$, $S_\kappa$, and $S_\omega$. 
\end{subequations}
Using these to equate ${\bf u}_1(0)={\bf u}_1(T)$ and ${\bf u}_3(0)={\bf u}_3(T)$ results in the following four equations:
\begin{subequations}
\begin{align}
    &\gamma T_E+\rho T_\kappa+\sigma T_\w=0, \\
    &\gamma\eta_E+\rho\eta_\kappa+\sigma\eta_\w=0, \\
    &\tau T_E+\nu T_\kappa=0, \\
    &\tau\eta_E+\nu\eta_\kappa+\sigma T/2=0,
\end{align}
\end{subequations}
which we can solve for $\sigma,\ \gamma,\ \rho,\ \tau,\ \nu$, finding
\begin{subequations}
\begin{align}
        & \gamma  = \{T,\eta\}_{\kappa,\omega}, & \\
        & \rho = \{T,\eta\}_{\omega,E}, & \\
        & \tau = T T_\kappa/2, &\\
        & \nu  = -T T_E/2, & \\
        & \sigma = \{T,\eta\}_{E,\kappa}. &
\end{align}
\end{subequations}

\section{Evaluating Matrix Elements}\label{ap:matrixelements}




In this Appendix, we show some details of how to compute the matrix elements from Eq.~\eqref{eqn:quarticmatrixelements}. 

We begin by computing the elements of ${\bf M}^{(2)}$, which are also the elements of the gram matrix:
\begin{align}
   & {\bf v}_0 {\bf u}_0  = -\sigma \int_0^T A (\gamma A_E + \rho A_{\kappa} + \sigma A_{\omega}) dy & \\
   & {\bf v}_2 {\bf u}_0  = -\sigma \int_0^T A (\tau A_E + \nu A_{\kappa}) dy & \\
   &{\bf v}_2 {\bf u}_2  = \sigma \int_0^T A A_y (\tau  S_E + \nu S_{\kappa} + \sigma   \frac{y}{2})
   - A S_y  (\tau A_E + \nu A_{\kappa} ) dy. & 
\end{align}
The first two are somewhat trivial, the third is only slightly more complicated and requires some integration by parts and the fact that $(\tau \partial_E + \nu \partial_\kappa) \eta +\sigma T/2 = 0$.  
We see that
\begin{subequations}
\begin{align}
    &{\bf v}_0 {\bf u}_0 = -\frac{\sigma}{2} (\gamma M_E + \rho M_\kappa + \sigma M_\omega)  
            = - \frac{\sigma}{2} \{\eta,T,M\}_{\kappa,E,\omega}& \\
    &{\bf v}_2 {\bf u}_0 = -\frac{\sigma}{2} (\tau M_E + \nu M_\kappa)  
            = -\frac{\sigma T}{4}\{T,M\}_{\kappa,E}\\
    & {\bf v}_2 {\bf u}_2 = -\frac{\sigma}{2} (\tau \kappa T_E + \nu \kappa T_\kappa + \nu T + \sigma M/2)  
            =-\frac{\sigma^2M}{4}+\frac{\sigma T^2T_E}{4}.
\end{align}\label{eqn:viuj}
\end{subequations}

We also note that using $(\gamma \partial_E + \rho \partial_\kappa +
    \sigma \partial_\omega) \eta = 0$, we also have 
\begin{multline}
    {\bf v}_0 {\bf u}_2 = \sigma \int_0^T A A_y (\gamma  S_E + \rho S_{\kappa} + \sigma S_{\omega})
   - A S_y  (\gamma A_E + \rho A_{\kappa} + \sigma A_{\omega} ) dy\\
           = -\frac{\sigma}{2}(\gamma \kappa T_E+\rho \kappa T_\kappa +\rho T+\sigma \kappa T_\omega) = -\frac{\sigma \rho T}{2}.
\end{multline}

It remains to compute the quantities for ${\bf M}^{(1)}$ and ${\bf M}^{(0)}$.  
For ${\bf M}^{(1)}$, because of the symmetry of $\mathcal L^{(1)}$, we need only to find the four quantities ${\bf v}_{0,2}\mathcal L^{(1)}{\bf u}_{0,2}$.  One can show that $\mathcal L^{(1)}{\bf u}_0=2i{\bf u}_2$, so that ${\bf v}_0\mathcal L^{(1)}{\bf u}_0$ and ${\bf v}_2\mathcal L^{(1)}{\bf u}_0$ follow from ${\bf v}_j{\bf u}_k$. We have 
\begin{align*}
    {\bf v}_0\mathcal L^{(1)}{\bf u}_0&=-i\sigma \rho T,\\
    {\bf v}_2\mathcal L^{(1)}{\bf u}_0&= -i\sigma(T\nu+\sigma M/2).
\end{align*}

Here we will compute ${\bf v}_0\mathcal L^{(1)}{\bf u}_2$, leaving computation of ${\bf v}_2\mathcal L^{(1)}{\bf u}_2$ as an exercise for the reader.  
The integral we seek to compute is
\begin{subequations}
\begin{multline*}
{\bf v}_0\mathcal L^{(1)}{\bf u}_2=-2i\sigma\int_0^TA(\gamma S_E+\rho S_\kappa +\sigma S_\w)(-A' S'-(A S')')\\
+(\gamma A_E+\rho A_\kappa +\sigma A_\w)(-A''+A (S')^2)dy,
\end{multline*}
which can be simplified using that $2A'S'+AS''=0$ and rearranged as
\begin{multline*}
{\bf v}_0\mathcal L^{(1)}{\bf u}_2=-2i\sigma\left[\gamma\int_0^T -A''A_E+AA_E( S')^2dy\right.\\
\left.+\rho\int_0^T -A''A_\kappa +AA_\kappa ( S')^2dy
+\sigma\int_0^T -A''A_\w+AA_\w( S')^2dy\right].
\end{multline*}
Next we integrate some terms by parts, using that $AA'=(A^2)'/2$.
This leads to
\begin{multline*}
{\bf v}_0\mathcal L^{(1)}{\bf u}_2=-2i\sigma\left[\gamma\int_0^T (A')^2_E/2+AA_E( S')^2dy\right.\\
\left.+\rho\int_0^T (A')^2_\kappa/2 +AA_\kappa ( S')^2dy
+\sigma\int_0^T (A')^2_\w/2+AA_\w( S')^2dy\right].
\end{multline*}
Now we use that $\int_0^T (A')^2dy = K$ with it's relevant derivatives to simplify:
\begin{multline*}
{\bf v}_0\mathcal L^{(1)}{\bf u}_2=-2i\sigma\left[ \gamma T/2-\rho\eta/2-\sigma M/4\right.\\
\left.+\gamma\int_0^T AA_E( S')^2dy+\rho\int_0^T AA_\kappa ( S')^2dy+\sigma\int_0^T AA_\w( S')^2dy\right].
\end{multline*}
Finally, one can show that 
$AA_E( S')^2=\left(\frac{A^2( S')^2}{2}\right)_E-A^2 S' S'_E=\left(\frac{\kappa  S'}{2}\right)_E-\kappa  S'_E=-\kappa  S'_E/2$.  The same is true with $\omega$ derivatives, and similar is true with $\kappa$ derivatives.  This leads to
\begin{multline*}
{\bf v}_0\mathcal L^{(1)}{\bf u}_2=-2i\sigma\left[\frac{\gamma T}{2}-\frac{\rho\eta}{2}-\frac{\sigma M}{4}\right.\\
\left.+\gamma\int_0^T -\kappa  S'_E/2dy+\rho\int_0^T -\kappa  S'_\kappa /2+ S'/2dy+\sigma\int_0^T -\kappa  S'_\w/2dy\right].
\end{multline*}
Now, since $S(T)-S(0)=\eta$, we can complete the integration and simplify using $\gamma\eta_E+\rho\eta_\kappa +\sigma\eta_\w=0$ to obtain
\begin{equation*}
{\bf v}_0\mathcal L^{(1)}{\bf u}_2=-2i\sigma\left[\frac{\gamma T}{2}-\frac{\sigma M}{4}\right].
\end{equation*}
\end{subequations}

In a similar way one can find
\[{\bf v}_2\mathcal L^{(1)}{\bf u}_2 = -2i\sigma \left[\frac{\tau T}{2}+\frac{\sigma \kappa T}{4}\right].\]

Now, to obtain the matrix elements of ${\bf M}^{(0)}$, we will need 8 more quantities: ${\bf v}_{1,3}\mathcal L^{(2)}{\bf u}_{0,2}$ and ${\bf v}_{1,3}\mathcal L^{(1)}\mathcal L^{-1}\mathcal L^{(1)}{\bf u}_{0,2}$.  The first four are:
\begin{align*}
    {\bf v}_1\mathcal L^{(2)}{\bf u}_0 &=\sigma^2 M,\\
    {\bf v}_1\mathcal L^{(2)}{\bf u}_2={\bf v}_3\mathcal L^{(2)}{\bf u}_0&=\sigma^2\kappa T,\\
    {\bf v}_3\mathcal L^{(2)}{\bf u}_2 &= \sigma^2(2ET-\omega M-\zeta U),
\end{align*}
where $U=\int_0^TF(A^2)dy$.  Three of the latter four follow from ${\bf v}_{0,2}\mathcal L^{(1)}{\bf u}_{0,2}$ due to the fact that $\mathcal L^{(1)}{\bf u}_0=2i{\bf u}_2$ and ${\bf v}_1\mathcal L^{(1)}=2i{\bf v}_3$.  Thus 
\begin{align*}
    {\bf v}_1\mathcal L^{(1)}\mathcal L^{-1}\mathcal L^{(1)}{\bf u}_0&=2i{\bf v}_2\mathcal L^{(1)}{\bf u}_0\\
    &=2\sigma\left(\nu T+\frac{\sigma M}{2}\right)
\end{align*}
and (also using symmetry)
\begin{equation*}
    {\bf v}_1\mathcal L^{(1)}\mathcal L^{-1}\mathcal L^{(1)}{\bf u}_2={\bf v}_3\mathcal L^{(1)}\mathcal L^{-1}\mathcal L^{(1)}{\bf u}_0= 2\sigma\left(\tau T+\frac{\sigma \kappa  T}{2}\right).
\end{equation*}
Finally, it remains to compute ${\bf v}_{3}\mathcal L^{(1)}\mathcal L^{-1}\mathcal L^{(1)}{\bf u}_{2}$.  First, we note that we can write 
\begin{align*} 
\mathcal L^{(1)}{\bf u}_2 
 &= -i\sigma\left(\begin{array}{c}0\\ 2A_{yy}-2A(S_y)^2\end{array}\right)\\
 &= 2i\omega {\bf u}_0+2i\zeta {\bf u}_4,
\end{align*}
using Eqs.~\eqref{eqn:TravWave1}, \eqref{eqn:TravWave2}, \eqref{eqn:TravWave3}. 
Then we can write 
\[ {\bf v}_3\mathcal L^{(1)}\mathcal L^{-1}\mathcal L^{(1)}{\bf u}_2 = 2i\omega {\bf v}_3\mathcal L^{(1)}{\bf u}_1+2i\zeta {\bf v}_3\mathcal L^{(1)}{\bf u}_5.
\]
We already know the first two terms, and the third can be integrated in a similar way as
\[
{\bf v}_3\mathcal L^{(1)}{\bf u}_5 = -2i\sigma\left[\frac{\xi T}{2}-\frac{\sigma U}{4}\right].
\]
Finally, we can combine and simplify to obtain
\begin{equation*}
    {\bf v}_{3}\mathcal L^{(1)}\mathcal L^{-1}\mathcal L^{(1)}{\bf u}_{2}
    =2\sigma\left(\frac{2\w\gamma T}{2}-\frac{2\w\sigma M}{4}
    +\zeta\xi T-\frac{\sigma\zeta U}{2}\right).
\end{equation*}
Putting all these quantities together, we obtain expressions for all the matrix elements.

\bibliographystyle{plain}
\bibliography{Stability}

\end{document}